\date{} 
\title{Dimension~of~the~$\SLE$~light~cone, the~$\SLE$~fan, and
$\SLE_\kappa(\rho)$~for~$\kappa \in (0,4)$~and~$\rho \in [\tfrac{\kappa}{2}-4,-2)$}
\author{Jason Miller}
\newcommand{\head}[1]{\textnormal{\textbf{#1}}}
\def\@rst #1 #2other{#1}
\newcommand\MR[1]{\relax\ifhmode\unskip\spacefactor3000 \space\fi
  \MRhref{\expandafter\@rst #1 other}{#1}}
\newcommand{\MRhref}[2]{\href{http://www.ams.org/mathscinet-getitem?mr=#1}{MR#2}}
\newif\ifhyper\IfFileExists{hyperref.sty}{\hypertrue}{\hyperfalse}
\ifhyper\usepackage{hyperref}\fi
\newif\ifdraft
\numberwithin{equation}{section}
\numberwithin{figure}{section}
\newtheorem{theorem}{Theorem}
\numberwithin{theorem}{section}
\newtheorem{lemma}[theorem]{Lemma}
\newtheorem{proposition}[theorem]{Proposition}
\theoremstyle{remark}
\theoremstyle{remark}\newtheorem{remark}[theorem]{Remark}
\newcommand{\R}{\mathbf{R}}
\renewcommand{\C}{\mathbf{C}}
\newcommand{\D}{\mathbf{D}}
\newcommand{\Z}{\mathbf{Z}}
\newcommand{\N}{\mathbf{N}}
\newcommand{\h}{\mathbf{H}}
\newcommand{\CC}{\mathcal {C}}
\newcommand{\CD}{\mathcal {D}}
\newcommand{\CE}{\mathcal {E}}
\newcommand{\CF}{\mathcal {F}}
\newcommand{\CP}{\mathcal {P}}
\newcommand{\CS}{\mathcal {S}}
\newcommand{\CU}{\mathcal {U}}
\newcommand{\CV}{\mathcal {V}}
\newcommand{\CG}{\mathcal {G}}
\newcommand{\CH}{\mathcal {H}}
\def\dist{\mathop{\mathrm{dist}}}
\newcommand{\giv}{\,|\,}
\newcommand{\SLE}{{\rm SLE}}
\newcommand{\ol}{\overline}
\newcommand{\ul}{\underline}
\newcommand{\wh}{\widehat}
\newcommand{\wt}{\widetilde}
\newcommand{\hstrip}{\CS}
\newcommand{\dimH}{\mathrm{dim}_{\mathcal H}}
\newcommand{\fan}{{\mathbf F}}
\newcommand{\one}{{\mathbf 1}}
\newcommand{\im}{{\rm Im}}
\definecolor{purple}{rgb}{0.7,0,0.7}
\definecolor{gray}{rgb}{0.6,0.6,0.6}
\definecolor{dgreen}{rgb}{0.0,0.4,0.0}
\definecolor{dblue}{rgb}{0.0,0.0,0.5}
\newcommand{\lightcone}{{\mathbf L}}
\NewDocumentCommand{\pocket}{m+g}{
\IfNoValueTF{#2}
 {P(#1)}
 {P_{#1}(#2)}
}
\NewDocumentCommand{\side}{m+g}{
\IfNoValueTF{#2}
 {S_{#1}}
 {S_{#1}(#2)}
}
\NewDocumentCommand{\sideflow}{m+g}{
\IfNoValueTF{#2}
 {\eta_{#1}}
 {\eta_{#1}(#2)}
}
\def\Ito/{It\^o}
\def \p {{\mathbf P}}
\def \E {{\bf E}}
\begin{document}
\maketitle

\begin{abstract}
Suppose that $h$ is a Gaussian free field (GFF) on a planar domain.  Fix $\kappa \in (0,4)$.  The $\SLE_\kappa$ light cone $\lightcone(\theta)$ of $h$ with opening angle $\theta \in [0,\pi]$ is the set of points reachable from a given boundary point by angle-varying flow lines of the (formal) vector field $e^{i h/\chi}$, $\chi = \tfrac{2}{\sqrt{\kappa}} - \tfrac{\sqrt{\kappa}}{2}$, with angles in $[-\tfrac{\theta}{2},\tfrac{\theta}{2}]$.  We derive the Hausdorff dimension of $\lightcone(\theta)$.

If $\theta =0$ then $\lightcone(\theta)$ is an ordinary $\SLE_{\kappa}$ curve (with $\kappa < 4$); if $\theta = \pi$ then $\lightcone(\theta)$ is the range of an $\SLE_{\kappa'}$ curve ($\kappa' = 16/\kappa > 4$). In these extremes, this leads to a new proof of the Hausdorff dimension formula for~$\SLE$.

We also consider $\SLE_\kappa(\rho)$ processes, which were originally only defined for $\rho > -2$, but which can also be defined for $\rho \leq -2$ using L\'evy compensation. The range of an $\SLE_\kappa(\rho)$ is qualitatively different when $\rho \leq -2$.  In particular, these curves are self-intersecting for $\kappa < 4$ and double points are dense, while ordinary $\SLE_\kappa$ is simple.  It was previously shown (Miller-Sheffield, 2016) that certain $\SLE_\kappa(\rho)$ curves agree in law with certain light cones. Combining this with other known results, we obtain a general formula for the Hausdorff dimension of $\SLE_\kappa(\rho)$ for all values of $\rho$.

Finally, we show that the Hausdorff dimension of the so-called $\SLE_\kappa$ {\em fan} is the same as that of ordinary $\SLE_\kappa$.
\end{abstract}

\newpage
\tableofcontents
\newpage

\medbreak {\noindent\bf Acknowledgements.}  This research was partially supported by NSF grant DMS-1204894.  We thank Scott Sheffield and Wendelin Werner for helpful discussions.  We also thank two anonymous referees for helpful suggestions which led to many improvements to this article.

\section{Introduction}

Suppose that~$h$ is an instance of the Gaussian free field (GFF) on a planar domain~$D$.  Although~$h$ is not a function and does not take values at points, one can still make sense of the flow lines of the (formal) vector field~$e^{i h / \chi}$ where~$\chi > 0$, i.e., the (formal) solutions to the equation $\eta'(t) = e^{i h(\eta(t))/\chi}$ \cite{SHE_WELD,DUB_PART,MS_IMAG,MS_IMAG4}.  These paths turn out to be forms of the Schramm-Loewner evolution ($\SLE$) \cite{S0}.

The purpose of this work is to compute the almost sure Hausdorff dimension of certain sets which naturally fit into the~imaginary geometry (i.e., $\SLE$/GFF coupling) framework. These sets can either be described as {\em light cones} associated to an imaginary geometry or as {\em ranges} of $\SLE_{\kappa}(\rho)$ processes with $\rho < -2$.

Specifically, suppose that~$h$ is a GFF on the upper half plane~$\h$ with piecewise constant boundary conditions which change values at most a finite number of times.  Fix $\kappa \in (0,4)$ and $\theta \in [0,\pi]$.  The~$\SLE_\kappa$ {\bf light cone}~$\lightcone(\theta)$ of~$h$ starting from~$0$ is the closure of the set of points accessible by traveling along angle-varying flow lines of the (formal) vector field~$e^{ih/\chi}$, $\chi = \tfrac{2}{\sqrt{\kappa}}-\tfrac{\sqrt{\kappa}}{2}$, starting from~$0$ with angles contained in $[-\tfrac{\theta}{2},\tfrac{\theta}{2}]$.  When $\theta = 0$, the light cone is equal to the range of an~$\SLE_\kappa$ process.  It is shown in \cite{MS_IMAG} that when~$\theta=\pi$, the light cone is equal to the range of an~$\SLE_{16/\kappa}$ process.  By varying $\theta \in (0,\pi)$, the sets $\lightcone(\theta)$ continuously interpolate between the range of an~$\SLE_\kappa$ process ($\theta = 0$) and the range of an~$\SLE_{16/\kappa}$ process ($\theta = \pi$) \cite{MS_LIGHTCONE}.  See \cite[Section~1]{MS_IMAG} for simulations of the light cone.

Let~$\dimH(A)$ denote the Hausdorff dimension of a set~$A$.  The purpose of this work is to compute the almost sure value of~$\dimH(\lightcone(\theta))$.  Throughout, we write
\begin{equation}
\label{eqn::lightcone_dimension_formula}
 d(\kappa,\theta) = \frac{(\kappa(1-\ol{\theta}) + 4\ol{\theta})(\kappa+8+(\kappa-4)\ol{\theta})}{8\kappa} \quad\text{where}\quad \ol{\theta} = \frac{\theta}{\pi}.
\end{equation}

The main result of this article is the following:

\begin{theorem}
\label{thm::lightcone_dimension}
Suppose that~$h$ is a GFF on~$\h$ with piecewise constant boundary data which changes values at most a finite number of times.  Let~$\lightcone(\theta)$ be the~$\SLE_\kappa$ light cone ($\kappa \in (0,4)$) of~$h$ starting from~$0$ with opening angle $\theta \in [0,\pi]$ and assume that the boundary data of $h$ is such that~$\p[\lightcone(\theta) \neq \emptyset]=1$.  Almost surely,
\begin{equation}
\label{eqn::lightcone_dimension}
 \dimH(\lightcone(\theta)) = d(\kappa,\theta) \wedge 2.
\end{equation}
\end{theorem}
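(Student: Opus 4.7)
The plan is to reduce the statement to a Beffara-style second-moment calculation for an $\SLE_\kappa(\rho)$ process. By the coupling results of Miller--Sheffield \cite{MS_IMAG,MS_LIGHTCONE}, for each $\theta \in [0,\pi]$ there is a value $\rho = \rho(\theta) \in [\tfrac{\kappa}{2}-4,-2]$ such that, for boundary data of $h$ making $\prob[\lightcone(\theta) \neq \emptyset]=1$, the set $\lightcone(\theta)$ agrees in law with the range of an $\SLE_\kappa(\rho)$ curve from $0$ to $\infty$ in $\HH$, where the process is understood via L\'evy compensation when $\rho \leq -2$. Since absolute continuity between GFFs with different piecewise constant boundary data shows that the a.s.\ dimension does not depend on the particular choice, it suffices to compute the Hausdorff dimension of the range of this $\SLE_\kappa(\rho)$ curve.

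For the upper bound, the key input is a \emph{one-point hitting estimate}
\begin{equation*}
 \prob\bigl[\lightcone(\theta) \cap B(z,\eps) \neq \emptyset\bigr]
 \;\leq\; C\,(\eps/y)^{2 - d(\kappa,\theta)},
 \qquad y = \im z,\; 0<\eps<y/2,
\end{equation*}
which one establishes by parameterizing the process by half-plane capacity and applying It\^o's formula to a suitable power of the conformal radius from $z$ of the unbounded component of the complement of the hull. The SDE for the driving function, together with the L\'evy compensation terms when $\rho \leq -2$, produces a positive supermartingale whose exponent is tuned so that $2 - d(\kappa,\theta)$ appears. Covering $\HH$ by balls of radius $\eps$ and Markov's inequality then yield $\dimH(\lightcone(\theta)) \leq d(\kappa,\theta) \wedge 2$ a.s., with the cap at $2$ being automatic since $\lightcone(\theta) \subset \HH$.

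For the lower bound, the approach is the standard second-moment / Frostman method. With $d = d(\kappa,\theta) \wedge 2$, set $\mu_\eps(dz) = \eps^{d-2}\, \one_{\{\lightcone(\theta) \cap B(z,\eps) \neq \emptyset\}}\, dz$. The one-point bound gives $\E[\mu_\eps(K)] = O(1)$ uniformly for compact $K \subset \HH$ at positive distance from $\partial \HH$, and a matching \emph{two-point bound}
\begin{equation*}
 \prob\bigl[\lightcone(\theta) \cap B(z,\eps) \neq \emptyset,\; \lightcone(\theta) \cap B(w,\eps) \neq \emptyset\bigr]
 \;\leq\; C\,\eps^{2(2-d)}\,|z - w|^{-(2-d)}
\end{equation*}
(for $|z-w| \geq 2\eps$ and $z,w$ in a compact set of $\HH$) shows that for every $s < d(\kappa,\theta)\wedge 2$ the expected $s$-energy of $\mu_\eps$ stays bounded as $\eps \downarrow 0$. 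A subsequential weak limit is then a nontrivial measure supported on $\lightcone(\theta)$ with finite $s$-energy, and Frostman's criterion gives $\dimH(\lightcone(\theta)) \geq d(\kappa,\theta) \wedge 2$ with positive probability; a $0$--$1$ argument using scale invariance at the tip of the process upgrades this to an a.s.\ statement.

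The principal obstacle is the two-point estimate. For simple $\SLE_\kappa$ (the case $\theta = 0$) this goes back to Beffara through a reverse-time conditioning, but here $\SLE_\kappa(\rho)$ with $\rho \in [\tfrac{\kappa}{2}-4,-2)$ is self-intersecting, so the range is no longer determined by a single Markov time of the curve. One must instead condition on the first time the growing hull enters $B(z,\eps)$ and then track, excursion by excursion in the L\'evy-compensated driving process, the conditional probability of eventually entering $B(w,\eps)$; the crux is to show that this conditional probability exhibits the same scaling in $\eps$ as the unconditional one-point estimate. Once this is accomplished, matching the resulting exponent with the explicit formula \eqref{eqn::lightcone_dimension_formula} reduces to an algebraic verification using the relation $\rho = \rho(\theta)$.
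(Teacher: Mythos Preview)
Your outline has the right high-level architecture (one-point estimate for the upper bound, two-point estimate plus Frostman for the lower bound, zero--one via scaling), but both of the key technical steps are asserted rather than carried out, and your proposed route to the one-point estimate is precisely the one the paper explains is problematic and deliberately avoids.

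\textbf{The one-point estimate.} You propose to apply It\^o's formula to a power of the conformal radius for the L\'evy-compensated $\SLE_\kappa(\rho)$, tuning the exponent so that a supermartingale appears. This is the Rohde--Schramm strategy, and the paper remarks explicitly that extending it to $\rho<-2$ ``seems to be technically challenging because the presence of the force point introduces a second spatial variable into the corresponding PDE'' and because one cannot use absolute continuity with ordinary $\SLE_\kappa$ at the times when $W=V$ (which are exactly the times that produce the extra dimension). The paper's alternative is geometric: it observes that $\lightcone(\theta)$ hits $B(z,\epsilon)$ only if (a) the $\SLE_{\kappa'}$ counterflow line $\eta'$ hits $B(z,\epsilon)$, and (b) after conformally uniformizing $\eta'$ up to that time, the two flow lines $\eta^L,\eta^R$ of angles $\pm\tfrac{\theta}{2}$ starting near the tip avoid each other out to macroscopic scale. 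Part (a) is handled by the known $\SLE_{\kappa'}$ derivative moments (Proposition~\ref{prop::derivative_estimate}), and part (b) by a Schramm--Wilson two-path martingale that yields an explicit non-intersection exponent (Lemma~\ref{lem::non_intersection_exponent}). The product of these exponents gives $2-d(\kappa,\theta)$. This decomposition sidesteps the force-point PDE entirely and is the substantive idea you are missing for the upper bound.

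\textbf{The two-point estimate.} You correctly flag this as the principal obstacle, but your paragraph is a description of the difficulty, not a resolution of it: ``the crux is to show that this conditional probability exhibits the same scaling'' is exactly the statement that needs proof. The paper does not attempt a Beffara-style reverse-time conditioning on the self-intersecting $\SLE_\kappa(\rho)$. Instead it runs a multi-scale ``perfect points'' argument: at each dyadic scale one records an event $E_j$ defined in terms of $\eta'$ reaching a small ball around $z$ with controlled harmonic-measure geometry and the pair $(\eta^L,\eta^R)$ staying separated. The approximate independence across scales (Lemma~\ref{lem::perfect_conditional}) and across points (Lemma~\ref{lem::perfect_two_point_conditional}) is obtained from GFF absolute continuity (Lemma~\ref{lem::rn}) together with distortion estimates showing that the auxiliary flow lines at different scales or at well-separated points live in disjoint regions. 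This is what replaces the excursion-by-excursion analysis you sketch, and it is where the imaginary-geometry framework actually earns its keep.

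In short: your reduction to $\SLE_\kappa(\rho)$ is legitimate for transferring results, but the paper computes the dimension of $\lightcone(\theta)$ directly via the counterflow-line/flow-line decomposition rather than by analyzing the compensated driving SDE, and that decomposition is what makes both the one- and two-point estimates tractable.
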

The angle $\theta_c = \theta_c(\kappa)$ which solves $d(\kappa,\theta) = 2$ is given by
\begin{equation}
\label{eqn::space_filling_angle}
 \theta_c = \frac{\pi \kappa}{4-\kappa}.
\end{equation}
We note that~$\theta_c$ is equal to the so-called {\bf critical angle} introduced in \cite{MS_IMAG,MS_IMAG4}. Two GFF flow lines --- with a common starting point and a given angle difference --- intersect each other away from the starting point if and only if the angle difference is less than or equal to this critical angle (see \cite[Theorem~1.5]{MS_IMAG}).  Note that $\theta_c \in (0,\pi)$ for $\kappa \in (0,2)$, $\theta_c = \pi$ for $\kappa = 2$, and $\theta_c > \pi$ for $\kappa \in (2,4)$. Since we only define light cones $\lightcone(\theta)$ for $\theta \in [0,\pi]$, this implies that~$\SLE_\kappa$ light cones can be space-filling if and only if $\kappa \in (0,2]$.  This corresponds to the fact that~$\SLE_{16/\kappa}$ is space-filling if and only if $\kappa \in (0,2]$ \cite{RS05}.  We will provide additional explanation in Remark~\ref{rem::critical_angle} for why~$\theta_c$ naturally appears in Theorem~\ref{thm::lightcone_dimension}.

The $\SLE_\kappa(\rho)$ processes, first introduced in \cite[Section~8.3]{LSW_RESTRICTION}, are an important variant of~$\SLE_\kappa$ in which one keeps track of an extra marked point $V$ called a {\bf force point} in addition to the Loewner driving function $W$.  (See Section~\ref{subsec::sle}.) The force point can be located either in the interior of the domain or on its boundary.  Throughout this article, we will primarily restrict ourselves to the case in which the force point is on the boundary of the upper half plane, so that $V$ is a real-valued process like $W$.

 The parameter~$\rho$ determines the strength of the ``interaction'' between $W$ and $V$.  When~$\rho=0$, $\SLE_\kappa(\rho)$ is the same as ordinary~$\SLE_\kappa$.  When $\rho > 0$ (resp.\ $\rho < 0$), $W$ is pushed away from (resp.\ pulled towards) $V$.  Like ordinary~$\SLE_\kappa$, the~$\SLE_\kappa(\rho)$ processes are described in terms of the Loewner evolution driven by $W$.  However, the law of $W$ is different, and is determined by the fact that $V - W$ is a positive multiple of a Bessel process whose dimension depends on both~$\kappa$ and~$\rho$ and is explicitly given by $\delta(\kappa,\rho) = 1+\tfrac{2(\rho+2)}{\kappa}$,  see Section~\ref{subsec::sle}.

\begin{remark} There are variants of $\SLE_{\kappa}(\rho)$ in which the {\em sign} of each excursion of $V - W$ away from zero is chosen independently at random with a fixed biased coin; but throughout this paper we will always assume that the sign of $V-W$ is the same for all excursions---in other words, in this paper we consider only {\em one-sided} $\SLE_\kappa(\rho)$ and not {\em side-swapping} $\SLE_\kappa(\rho)$.
\end{remark}

In order to define the process for all time (as opposed to having it stop when $V$ and $W$ first collide) most treatments of $\SLE_\kappa(\rho)$ (including \cite{LSW_RESTRICTION}) require that $\rho > -2$, so that $\delta = \delta(\kappa, \rho) > 1$. But the processes with $\rho \leq -2$ can also be defined (using an appropriate L\'evy compensation) and are also important. As explained in \cite{cle_percolations}, when $\kappa \in (2,4)$ there are certain ranges of $\rho$ values for which $\SLE_{\kappa}(\rho)$ can be described as the concatenation of a countable collection of $\SLE_{\kappa}$ loops, all attached to an $\SLE_{\kappa'}$ ``trunk'' ($\kappa'=16/\kappa > 4$) and in these cases the dimension of the whole range of the path is the dimension of the trunk, namely $(1+\kappa'/8) \wedge 2$.  As explained in \cite{MS_LIGHTCONE}, there are other values of $\rho$ such that the range of an $\SLE_\kappa(\rho)$ process agrees in law with a light cone $\lightcone(\theta)$ (defined from a GFF with particular boundary values) where the relationship between~$\rho$ and~$\theta$ is given by the formula
\begin{equation} 
\label{eqn::lightcone_angle}
 \theta = \theta_\rho = \pi\left(\frac{\rho+2}{\tfrac{\kappa}{2}-2} \right).
\end{equation}
Table~\ref{tab::rho_values} presents a phase diagram for the~$\rho$ values and the corresponding Bessel process dimensions~$\delta(\kappa,\rho)$ for~$\SLE_\kappa(\rho)$ processes with $\kappa \in (0,4)$, see also Figure~\ref{fig::kapprhophasechart}.  The dimensions in the table are obtained by combining Theorem~\ref{thm::lightcone_dimension} with the main result of \cite{MS_LIGHTCONE}. Let us state this as a theorem:

\begin{table}
\begin{center}
{\footnotesize
\begin{tabular}{llllc}
\toprule
\head{$\rho$} & \head{$\delta(\kappa,\rho)$}  & \head{$\dimH(\text{Range})$} & \head{Process type} & \head{Simple}\\
\toprule

$(-\infty,-2-\tfrac{\kappa}{2}]$ & $(-\infty,0]$ & --- & Not defined & ---\\
$(-2-\tfrac{\kappa}{2},\tfrac{\kappa}{2}-4]$ & $(0,2-\tfrac{4}{\kappa}] $ & $1+\tfrac{2}{\kappa} = 1+\tfrac{\kappa'}{8}$ & Trunk plus loops & $\text{\sffamily X}$\\ \vspace{.02in}
$(\tfrac{\kappa}{2}-4,-2)$ & $(2-\tfrac{4}{\kappa},1)$ & $\tfrac{(\kappa-2(2+\rho))(\kappa+2(6+\rho))}{8\kappa}$ & Light cone & $\text{\sffamily X}$ \\
$-2$ & $1$ & $1$ & Boundary tracing & $\checkmark$\\
$(-2,\tfrac{\kappa}{2}-2)$ & $(1,2)$ & $1+\tfrac{\kappa}{8}$ & Boundary hitting & $\checkmark$\\
$ [\tfrac{\kappa}{2}-2,\infty)$ & $[2,\infty)$ & $1+\tfrac{\kappa}{8}$ & Boundary avoiding & $\checkmark$\\
\bottomrule
\end{tabular}}
\end{center}
\medskip
\caption{\label{tab::rho_values} Phases of~$\rho$ values and corresponding $\delta(\kappa,\rho)$ (driving Bessel process dimension) values for $\SLE_\kappa(\rho)$ processes with a single boundary force point of weight~$\rho$, assuming $\kappa \in (2,4)$. When $\kappa \in (0, 2]$, the phase diagram is the same except that the second two phases are replaced by a single ``light cone'' phase with $\rho \in (-2-\kappa/2,-2)$ and $\delta \in (0,1)$. When the dimension value shown in the table is greater than or equal to $2$, the curve is space-filling, so the actual dimension is $2$. See Figure~\ref{fig::kapprhophasechart}.
}
\end{table}

\begin{figure}[ht!]
\begin{center}
\includegraphics[width=0.4\textwidth]{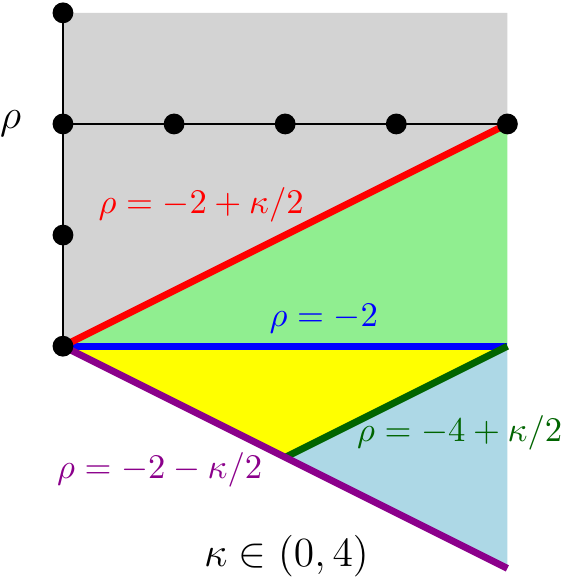}
\end{center}
\caption{\label{fig::kapprhophasechart}
$\SLE_{\kappa}(\rho)$ phases from Table~\ref{tab::rho_values}. This paper computes dimensions within the yellow ``light cone'' region described in more detail in \cite{MS_LIGHTCONE}. As one approaches the $\rho = -2$ {\em upper boundary} from below, the curve converges in law to a boundary-tracing curve, but the dimension converges to $1 + \kappa/8$ (which is also the dimension above the $\rho = -2$ line). As one approaches the {\em lower right} edge of the yellow triangle from above, $\theta$ tends to $\pi$, the {\em range} of the path (though not the path itself) converges in law to that of an $\SLE_{\kappa'}$ type curve, and the dimension converges to $1+\kappa'/8$, which is also the dimension throughout the blue triangle. As one approaches the {\em lower left} edge of the yellow triangle from above, $\theta$ tends to $\theta_c$, the corresponding $\SLE_{\kappa}(\rho)$ curves becomes space-filling, and the dimension tends to $2$.  In short, as one approaches the yellow three triangle edges (clockwise from the top, respectively) the dimensions converge to $1+\kappa/8$ and $1+\kappa'/8$ and $2$.}
\end{figure}

\begin{theorem}
\label{thm::sle_kappa_rho_dimension}
Suppose $\kappa \in (0,4)$ and that $\eta$ is an $\SLE_\kappa(\rho)$ process with $\rho \in ( (\tfrac{\kappa}{2}-4) \vee (-2 - \frac{\kappa}{2}), -2)$. (These $\rho$ values correspond to the light cone phase described in Table~\ref{tab::rho_values} and its caption.) Then almost surely,
\begin{equation}
\label{eqn::sle_kappa_rho_dimension}
 \dimH(\eta) = \frac{(\kappa-2(2+\rho))(\kappa+2(6+\rho))}{8\kappa}.
\end{equation}
\end{theorem}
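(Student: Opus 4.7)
The plan is to deduce Theorem~\ref{thm::sle_kappa_rho_dimension} directly from Theorem~\ref{thm::lightcone_dimension} together with the main identification result of \cite{MS_LIGHTCONE}. The whole proof should really be a corollary of two external inputs plus a short algebraic calculation.

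First I would invoke \cite{MS_LIGHTCONE}, which states that when $\rho \in ((\tfrac{\kappa}{2}-4) \vee (-2-\tfrac{\kappa}{2}), -2)$, one can build a GFF $h$ on $\h$ with an appropriate (piecewise constant, finitely many pieces) boundary data such that the range of the $\SLE_\kappa(\rho)$ process~$\eta$ agrees in law with the light cone $\lightcone(\theta_\rho)$ of $h$ from $0$, where $\theta_\rho = \pi(\rho+2)/(\tfrac{\kappa}{2}-2)$ as in~\eqref{eqn::lightcone_angle}. In particular $\theta_\rho \in (0, \pi \wedge \theta_c)$ throughout the stated range of~$\rho$, so the light cone is nontrivial and nonempty a.s., and the hypotheses of Theorem~\ref{thm::lightcone_dimension} are satisfied. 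Applying Theorem~\ref{thm::lightcone_dimension} yields $\dimH(\eta) = d(\kappa, \theta_\rho) \wedge 2$ almost surely.

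Next I would carry out the algebraic substitution. Setting $\ol{\theta} = \theta_\rho/\pi = 2(\rho+2)/(\kappa-4)$ in~\eqref{eqn::lightcone_dimension_formula}, the two factors simplify to
\begin{align*}
\kappa(1-\ol{\theta}) + 4\ol{\theta} &= \kappa + (4-\kappa)\ol{\theta} = \kappa - 2(\rho+2), \\
\kappa + 8 + (\kappa-4)\ol{\theta} &= \kappa + 8 + 2(\rho+2) = \kappa + 2(6+\rho),
\end{align*}
giving $d(\kappa, \theta_\rho) = (\kappa - 2(2+\rho))(\kappa+2(6+\rho))/(8\kappa)$, which is exactly the expression in~\eqref{eqn::sle_kappa_rho_dimension}.

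Finally I would verify that the $\wedge 2$ does not bind, so that the formula holds as stated. Viewing $d(\kappa,\theta)$ as a function of $u = \ol{\theta}$, it is a downward-opening quadratic with vertex at $u^* = 4/(4-\kappa)$, so on the admissible range $u \in (0, \ol{\theta_c}) \cap (0,1]$ the maximum is attained only at the endpoints, namely $d = 1 + \kappa/8$ at $\rho = -2$ and $d = 2$ at $\rho = -2-\kappa/2$ (when $\kappa \leq 2$) or $d = 1 + 2/\kappa$ at $\rho = \kappa/2 - 4$ (when $\kappa > 2$). Since~$\rho$ is strictly interior to the stated interval, one checks that $d(\kappa, \theta_\rho) < 2$, so $d(\kappa,\theta_\rho) \wedge 2 = d(\kappa, \theta_\rho)$, completing the proof. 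The only real obstacle here is purely bookkeeping, namely matching the normalizations and making sure the light-cone identification from \cite{MS_LIGHTCONE} indeed covers the full range of~$\rho$ claimed in the theorem; the dimension computation itself is a one-line consequence of Theorem~\ref{thm::lightcone_dimension}.
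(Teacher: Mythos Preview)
Your proposal is correct and matches the paper's approach exactly: the paper treats Theorem~\ref{thm::sle_kappa_rho_dimension} as an immediate corollary of Theorem~\ref{thm::lightcone_dimension} via the range identification from \cite{MS_LIGHTCONE}, and the algebraic substitution of $\ol{\theta}_\rho = 2(\rho+2)/(\kappa-4)$ into~\eqref{eqn::lightcone_dimension_formula} is precisely the computation underlying Table~\ref{tab::rho_values}. One small wording issue: for a downward-opening quadratic the maximum on an interval is not ``only at the endpoints'' in general, but your conclusion is still correct because the vertex $u^* = 4/(4-\kappa)$ lies strictly to the right of the admissible interval $(0,\min(1,\ol{\theta_c}))$, so $d(\kappa,\cdot)$ is increasing there and its supremum (approached but not attained at the open right endpoint) is strictly less than~$2$.
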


The almost sure Hausdorff dimension for ordinary $\SLE_\kappa$ is given by $1+\tfrac{\kappa}{8}$ for $\kappa \in (0,8)$ and by $2$ for $\kappa \geq 8$.  The upper bound for this result was first obtained by Rohde and Schramm \cite{RS05} and the lower bound was established by Beffara \cite{BEF_DIM}. Now suppose that $\eta$ is the trace of an $\SLE_\kappa(\rho)$ process $\eta$ with driving function $W$ and force point process $V$.  By the Girsanov theorem \cite{RY04}, the evolution of an $\SLE_\kappa(\rho)$ process $\eta$ --- started at a time when $V_t \not = W_t$ and stopped at a time before $V$ and $W$ collide --- has a law that is absolutely continuous with respect to that of an ordinary~$\SLE_\kappa$ process restricted to the same time interval. From this it is easy to show that the dimension of $\eta(\{t : V_t \not = W_t \})$ is a.s.\ the same as the dimension of ordinary $\SLE_\kappa$

But what about the set  $\eta(\{t : V_t = W_t \})$? When $\rho > -2$, the times when $V_t = W_t$ correspond to times when $\eta$ is hitting the boundary, so this set is a subset of $\R$. Consequently, $\dimH(\eta)$ in this case is the same as the dimension of ordinary $\SLE_\kappa$ because we trivially have that $\dimH(\eta \cap \R) \leq 1 \leq 1 + \tfrac{\kappa}{8}$.  (The almost sure value of $\dimH(\eta \cap \R)$ as a function of $\rho$ and $\kappa$ is given in \cite[Theorem~1.6]{MW_INTERSECTIONS}.)  For $\rho \in [\tfrac{\kappa}{2}-4,-2)$, the problem is more interesting because the set $\eta (\{ t: V_t = W_t\})$ includes points in the interior of the domain.  In fact, Theorem~\ref{thm::sle_kappa_rho_dimension} implies that the dimension of this set is strictly larger than the dimension of $\eta (\{ t: V_t \not = W_t\})$.

In \cite{cle_percolations}, it is shown that the same is true for $\rho \in (-2-\tfrac{\kappa}{2},\tfrac{\kappa}{2}-4]$.  In this case, the dimension of the range turns out to be $1+\tfrac{2}{\kappa}$, the same as the dimension of an $\SLE_{\kappa'}$ process, for all $\rho \in (-2-\tfrac{\kappa}{2},\tfrac{\kappa}{2}-4]$.  Together with this work, this covers the entire range of possible $\rho$ values.

Theorem~\ref{thm::sle_kappa_rho_dimension} also implies that the dimension of an $\SLE_\kappa(\rho)$ process, $\kappa \in (0,4)$, continuously interpolates between that of ordinary $\SLE_\kappa$ and that of $\SLE_{\kappa'}$.

The method we use to derive the so-called one point estimate (the exponent for the probability that the path gets within distance $\epsilon > 0$ of a given point as $\epsilon \to 0$) which, in turn, leads to the upper bounds in Theorem~\ref{thm::lightcone_dimension} and Theorem~\ref{thm::sle_kappa_rho_dimension}, is rather different in spirit from the method used by Rohde and Schramm \cite{RS05} to derive the corresponding one point estimate for ordinary $\SLE_\kappa$.  The strategy employed in \cite{RS05} is to try to find a martingale which becomes large on the event that an $\SLE_\kappa$ process gets close to a given point.  This leads one to derive and solve a certain PDE.  In the setting of an $\SLE_\kappa(\rho)$ process with $\rho < -2$, extending this method seems to be technically challenging because the presence of the force point introduces a second spatial variable into the corresponding PDE and, as we remarked earlier, one cannot use absolute continuity to compare to ordinary $\SLE_\kappa$.  To circumvent this difficulty, in the present article we will relate the event that $\lightcone(\theta)$ (or an $\SLE_\kappa(\rho)$ process with $\rho \in [\tfrac{\kappa}{2}-4,-2)$) gets close to a given point to the local structure of the flow lines of the GFF starting from that point.  One of the highlights of this approach is that it is conceptual in nature rather than computational.  The basic idea is illustrated in more detail in Figures~\ref{fig::flowline_hit_point}--\ref{fig::counterflowline_hit_conformal_maps}.  We will then use the martingales from \cite{SCHRAMM_WILSON} to estimate the probability that the local structure of the flow lines at a given point exhibits the necessary behavior for $\lightcone(\theta)$ to hit.

\begin{figure}[ht!]
\begin{center}
\includegraphics[width=0.8\textwidth,clip=true, trim = 1mm 1mm 1mm 1mm]{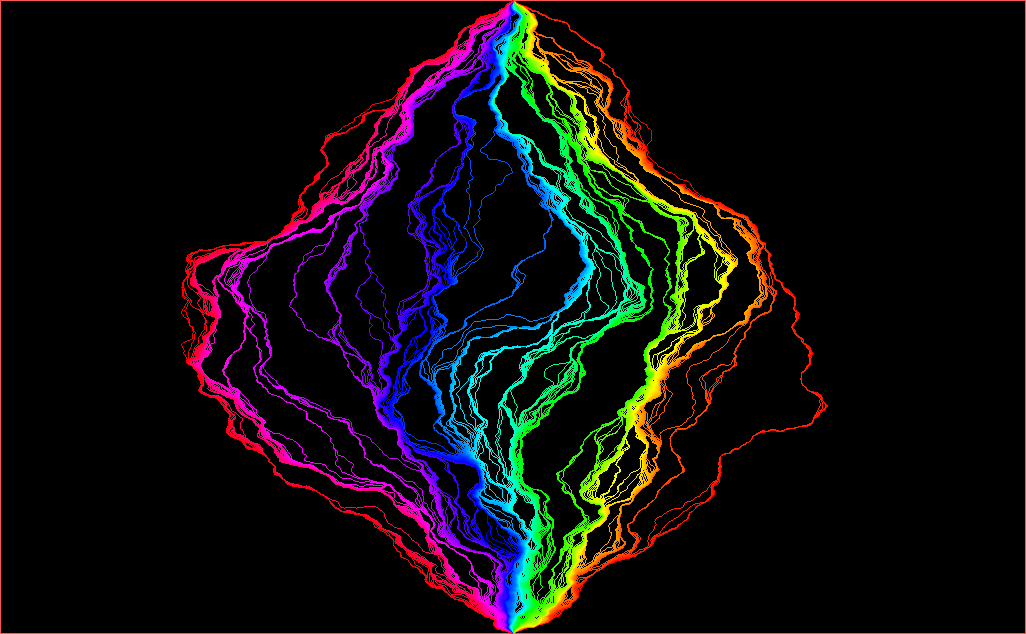}
\end{center}
\caption{\label{fig::fan}
Numerically generated flow lines, started at $-i$ of $e^{i(h/\chi+\theta)}$ where $h$ is the projection of a GFF on $[-1,1]^2$ onto the space of functions piecewise linear on the triangles of a $300 \times 300$ grid; $\kappa=1/4$.  Different colors indicate different values of $\theta \in [-\tfrac{\pi}{2},\tfrac{\pi}{2}]$.  The boundary data for $h$ is chosen so that the central (``north-going'') curve shown should approximate an $\SLE_{1/4}$ process.  The other paths should approximate $\SLE_{1/4}(\rho_1;\rho_2)$ processes where the values of $\rho_1,\rho_2 > -2$ are a function of $\theta$.  We prove in Theorem~\ref{thm::fan_dimension} that the almost sure Hausdorff dimension of the entire set shown is equal to the dimension of each of the individual paths.
}
\end{figure}

The lower bound is proved by relating the correlation structure of the points in $\lightcone(\theta)$ to the correlation structure of the values of $h$.  Roughly speaking, the approximate ``tree structure'' used in the lower bound arises because the collection of flow lines of $h$ with a common angle themselves form a tree (see \cite[Theorem~1.5 and Figure~1.7]{MS_IMAG} as well as \cite[Figures~1.4--1.6]{MS_IMAG4}).  Since $\lightcone(0)$ is equal to the range of an $\SLE_\kappa$ process for $\kappa \in (0,4)$ and $\lightcone(\pi)$ is equal to the range of an $\SLE_{\kappa'}$ process, we obtain as a special case of Theorem~\ref{thm::lightcone_dimension} the almost sure dimension of ordinary $\SLE$.  We remark that this is not the first article in which the imaginary geometry framework is used to compute dimensions related to $\SLE$:  it is used in \cite{MW_INTERSECTIONS} to derive the cut point, double point, and other dimensions associated with intersection sets of $\SLE$ paths; in \cite{multifractal_spectrum} to derive the almost sure multifractal spectrum of $\SLE$; and in \cite{brownian_motion_kpz,dimension_transformation} to derive certain KPZ-type formulas (using also the tools of \cite{dms2014mating}).

Fix $\theta \in [0,\pi]$.  The $\SLE_\kappa$ {\bf fan} $\fan(\theta)$ is the set of points accessible by flow lines of $h$ starting from $0$ with \emph{fixed} angles in $[-\tfrac{\theta}{2},\tfrac{\theta}{2}]$.  (This is in contrast to the paths which generate $\lightcone(\theta)$, since they are allowed to change angles.)  See Figure~\ref{fig::fan} for a numerical simulation of $\fan(\pi)$ for $\kappa=\tfrac{1}{4}$.  Obviously $\dimH(\fan(\theta)) \geq 1+\tfrac{\kappa}{8}$ because $\fan(\theta)$ contains the range of the $0$ angle flow line of $h$ which is itself an $\SLE_\kappa$ process (or possibly an $\SLE_\kappa(\ul{\rho})$ process depending on the boundary data of $h$).  It was shown in \cite{MS_IMAG} that the Lebesgue measure of $\fan(\theta)$ is almost surely zero.  Our final result gives that $\dimH(\fan(\theta)) = 1+\tfrac{\kappa}{8}$:

\begin{theorem}
\label{thm::fan_dimension}
For each $\kappa \in (0,4)$ and $\theta \in [0,\pi]$, the almost sure Hausdorff dimension of the $\SLE_\kappa$ fan $\fan(\theta)$ is $1+\tfrac{\kappa}{8}$ (assuming that the boundary data of $h$ is such that $\p[ \fan(\theta) \neq \emptyset] = 1$), the same as that of ordinary $\SLE_\kappa$.
\end{theorem}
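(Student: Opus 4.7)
The lower bound $\dimH(\fan(\theta)) \geq 1 + \tfrac{\kappa}{8}$ is immediate since $\fan(\theta)$ contains the zero-angle flow line $\eta_0$, which is an $\SLE_\kappa(\ul{\rho})$ curve with weights $\rho_i > -2$ (under our assumption that the boundary data is such that $\fan(\theta)$ is nontrivial) and so has almost sure Hausdorff dimension $1 + \tfrac{\kappa}{8}$ by the Rohde--Schramm--Beffara $\SLE$ dimension theorem, together with standard absolute-continuity arguments to absorb the force points.

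For the upper bound, my plan is to establish the one-point estimate
\begin{equation*}
\Prob\bigl[\fan(\theta) \cap B(z,\epsilon) \neq \emptyset\bigr] \;\leq\; \epsilon^{1 - \kappa/8 - o(1)}
\end{equation*}
uniformly for $z$ in any fixed compact subset of the interior of the domain, and then conclude via the standard first-moment covering argument: covering the bounded part of the domain by $O(\epsilon^{-2})$ balls of radius $\epsilon$, the expected number that hit $\fan(\theta)$ is at most $\epsilon^{-1 - \kappa/8 - o(1)}$, giving $\dimH(\fan(\theta)) \leq 1 + \tfrac{\kappa}{8}$. The substantive obstacle is that $\fan(\theta)=\bigcup_{\alpha \in [-\theta/2,\theta/2]}\eta_\alpha$ is an uncountable union of flow lines, so a naive union bound over $\alpha$ is unavailable, and the exponent $1 - \kappa/8$ must be extracted from the joint structure of the whole family.

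To overcome this I would mirror the imaginary-geometry strategy underlying Theorems~\ref{thm::lightcone_dimension} and \ref{thm::sle_kappa_rho_dimension}: reduce the hitting event $\{\fan(\theta) \cap B(z,\epsilon) \neq \emptyset\}$ to a local event at $z$ determined by the values of $h$ in a neighborhood of $z$. The flow lines of $h$ started at $z$ at various angles are measurable with respect to $h$ near $z$, and by the merging and monotonicity theorems of \cite{MS_IMAG}, if some $\eta_\alpha$ with $\alpha \in [-\theta/2,\theta/2]$ enters $B(z,\epsilon)$ then the flow line started at $z$ at that angle must display a particular local configuration (heuristically, it must head out and merge with $\eta_\alpha$ within a controlled multiple of $\epsilon$). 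The probability of this local event is then controlled using the Schramm--Wilson martingales, exactly as in the one-point analysis underlying Theorem~\ref{thm::lightcone_dimension}. The key technical obstacle is to show that, although $\alpha$ is allowed to range continuously over $[-\theta/2,\theta/2]$, the resulting one-point exponent is exactly $1-\kappa/8$ and does not weaken to the strictly smaller light-cone exponent $2-d(\kappa,\theta)$; this should ultimately rely on the fact that the paths in the fan have constant --- rather than angle-varying --- angles, which constrains the admissible local configurations of $h$ at $z$ substantially more tightly than in the light-cone case.
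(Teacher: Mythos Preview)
Your lower bound is exactly the paper's, so there is nothing to add there.

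For the upper bound, however, your proposal takes a much harder route than the paper and leaves the central step unresolved. You propose to prove directly that
\[
\p\bigl[\fan(\theta)\cap B(z,\epsilon)\neq\emptyset\bigr]\le \epsilon^{1-\kappa/8-o(1)}
\]
by reducing to a local configuration at $z$ and then arguing that the constant-angle constraint forces the exponent up from the light-cone value $2-d(\kappa,\theta)$ to $1-\kappa/8$. You correctly identify this last point as the ``key technical obstacle,'' but you do not actually carry it out; you only say it ``should'' follow. In the light-cone analysis the local event at $z$ is that two flow lines from $z$ with angles $\pm(\pi-\tfrac{\theta}{2})$ fail to intersect, and it is not at all obvious how the additional hypothesis ``the approaching path has \emph{constant} angle'' translates into a sharper local event with the improved exponent. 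As written, this is a genuine gap.

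The paper bypasses this difficulty entirely with a one-line covering trick. Observe that every fixed-angle flow line with angle $\alpha$ is in particular an angle-varying flow line with angles in any interval containing $\alpha$. Hence for any $\epsilon>0$,
\[
\fan(\theta)\ \subseteq\ \bigcup_{j=0}^{\lceil \theta/\epsilon\rceil}\lightcone\bigl(-\tfrac{\theta}{2}+j\epsilon,\,-\tfrac{\theta}{2}+(j+1)\epsilon\bigr),
\]
a \emph{finite} union of light cones each of opening angle $\epsilon$. Proposition~\ref{prop::dimension_upper_bound} (the light-cone upper bound, already proved) gives $\dimH$ of each such light cone $\le d(\kappa,\epsilon)$ almost surely, and a finite union does not increase Hausdorff dimension. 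Therefore $\dimH(\fan(\theta))\le d(\kappa,\epsilon)$ for every $\epsilon>0$, and sending $\epsilon\downarrow 0$ yields $d(\kappa,0)=1+\tfrac{\kappa}{8}$.

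In short: rather than sharpening the one-point estimate from the light-cone exponent to the $\SLE_\kappa$ exponent, the paper exploits the continuity $d(\kappa,\epsilon)\to 1+\tfrac{\kappa}{8}$ and the fact that the fan sits inside finitely many \emph{thin} light cones. This reduces Theorem~\ref{thm::fan_dimension} to an immediate corollary of Proposition~\ref{prop::dimension_upper_bound}; no new local analysis at $z$ is needed.
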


The reader might find Theorem~\ref{thm::fan_dimension} surprising because $\fan(\theta)$ consists of \emph{many} $\SLE_\kappa$ paths and one might suspect that the limit points of these paths would make the dimension strictly larger than that of a single $\SLE_\kappa$ path.  Theorem~\ref{thm::fan_dimension}, however, implies that this is not the case.  Another reason that the reader may find the result to be surprising is that the numerical simulations \cite[Figures~1.3---1.5]{MS_IMAG} suggest that as $\kappa \downarrow 0$ (but for a fixed value of $\theta \in (0,\pi)$), $\fan(\theta)$ converges almost surely in the Hausdorff topology to a \emph{two-dimensional} set.  However, Theorem~\ref{thm::fan_dimension} implies that $\dimH(\fan(\theta))$ almost surely converges to $1$ as $\kappa \downarrow 0$.

In the proofs of this work, we will assume that the reader has some familiarity with imaginary geometry as presented in \cite{MS_IMAG,MS_IMAG2,MS_IMAG3,MS_IMAG4} (though we will provide a reminder of the basic facts in Section~\ref{subsec:ig_review}).  We will in particular make use of the notation introduced in \cite[Figure~1.10]{MS_IMAG}.  Throughout, we assume that $\kappa \in (0,4)$, $\kappa'=16/\kappa \in (4,\infty)$, and let
\begin{equation}
\label{eqn:ig_constants}	
\chi = \frac{2}{\sqrt{\kappa}} - \frac{\sqrt{\kappa}}{2},\quad \lambda = \frac{\pi}{\sqrt{\kappa}},\quad\text{and}\quad \lambda' = \frac{\pi}{\sqrt{\kappa'}} = \lambda - \frac{\pi}{2} \chi.
\end{equation}
We will also use $\eta$ to refer to an $\SLE_\kappa$ process and $\eta'$ to refer to an $\SLE_{\kappa'}$ process.

\subsection*{Outline}

The remainder of this article is structured as follows.  In Section~\ref{sec::preliminaries}, we will collect several estimates which are used throughout this article as well as give a brief review of the results from \cite{MS_IMAG,MS_IMAG2,MS_IMAG3,MS_IMAG4} which will be used in this article.  Next, in Section~\ref{sec::upper_bound} we will prove the upper bound for Theorem~\ref{thm::lightcone_dimension}, hence also Theorem~\ref{thm::sle_kappa_rho_dimension}, and complete the proof of Theorem~\ref{thm::fan_dimension}.  Finally, in Section~\ref{sec::lowerbound} we will complete the proof of the lower bound for Theorem~\ref{thm::lightcone_dimension} hence also Theorem~\ref{thm::sle_kappa_rho_dimension}.

\section{Preliminaries}
\label{sec::preliminaries}

\subsection{$\SLE_\kappa(\rho)$ processes}
\label{subsec::sle}

We will now give a very brief introduction to~$\SLE$.  More detailed introductions can be found in many excellent surveys of the subject, e.g., \cite{W03, LAW05}.  Chordal~$\SLE_\kappa$ in~$\h$ from~$0$ to~$\infty$ is defined by the random family of conformal maps $(g_t)$ obtained by solving the Loewner ODE
\begin{equation}
\label{eqn::loewner_ode}
\partial_t g_t(z) = \frac{2}{g_t(z) - W_t},\quad\quad g_0(z) = z
\end{equation}
with $W = \sqrt{\kappa} B$ and~$B$ a standard Brownian motion.  Write $K_t := \{z \in \h: \tau(z) \leq t \}$ where $\tau(z)$ is the swallowing time of~$z$ defined by $\sup\{t\ge 0: \inf_{s\in[0,t]}|g_s(z)-W_s|>0\}$.  Then~$g_t$ is the unique conformal map from $\h_t := \h \setminus K_t$ to~$\h$ satisfying $\lim_{|z| \to \infty} |g_t(z) - z| = 0$.

Rohde and Schramm \cite{RS05} showed that there almost surely exists a curve~$\eta$ (the so-called~$\SLE$ {\bf trace}) such that for each $t \geq 0$ the domain~$\h_t$ of~$g_t$ is the unbounded connected component of $\h \setminus \eta([0,t])$, in which case the (necessarily simply connected and closed) set~$K_t$ is called the ``filling'' of $\eta([0,t])$ \cite{RS05}.  An~$\SLE_\kappa$ connecting boundary points~$x$ and~$y$ of an arbitrary simply connected Jordan domain can be constructed as the image of an~$\SLE_\kappa$ on~$\h$ under a conformal transformation $\varphi \colon \h \to D$ sending~$0$ to~$x$ and~$\infty$ to~$y$.  (The choice of~$\varphi$ does not affect the law of this image path, since the law of~$\SLE_\kappa$ on~$\h$ is scale invariant.)  For $\kappa \in [0,4]$, $\SLE_\kappa$ is simple and, for $\kappa > 4$, $\SLE_\kappa$ is self-intersecting \cite{RS05}.  The dimension of the path is $1+\tfrac{\kappa}{8}$ for $\kappa \in [0,8]$ and~$2$ for $\kappa > 8$ \cite{BEF_DIM}.

An $\SLE_{\kappa}(\underline{\rho}_L;\underline{\rho}_R)$ process is a generalization of
$\SLE_{\kappa}$ in which one keeps track of additional marked points which are called {\bf force points}.  These processes were first introduced in \cite[Section~8.3]{LSW_RESTRICTION}.  Fix $\underline{x}_L=(x_{\ell,L}<\cdots<x_{1,L}\le 0)$ and $\underline{x}_R=(0\le x_{1,R}<\cdots<x_{r,R})$.  We associate with each~$x_{i,q}$ for $q \in \{L,R\}$ a weight $\rho_{i,q} \in \R$.  An $\SLE_{\kappa}(\underline{\rho}_L; \underline{\rho}_R)$ process with force points $(\underline{x}_L; \underline{x}_R)$ is the measure on continuously growing compact hulls~$K_t$ generated by the Loewner chain with~$W_t$ replaced by the solution to the system of SDEs:
\begin{equation}
\label{eqn::slesde}
\begin{split}
d W_t &= \sum_{i=1}^{\ell} \frac{\rho_{i,L}}{W_t-V_t^{i,L}} dt + \sum_{i=1}^{r} \frac{\rho_{i,R} }{W_t-V_t^{i,R}} dt + \sqrt{\kappa}d B_t,\\
d V_t^{i,q}&= \frac{2}{V_t^{i,q}- W_t} dt,\quad V_0^{i,q} = x_{i,q},\quad i\in\N,\quad q\in\{L, R\}.
\end{split}
\end{equation}
It is explained in \cite[Section~2]{MS_IMAG} that for all~$\kappa>0$, there is a unique solution to~\eqref{eqn::slesde} up until the {\bf continuation threshold} is hit --- the first time~$t$ for which either
\begin{equation*}
\sum_{i:V^{i,L}_t=W_t} \rho_{i,L}\le -2 \quad\mbox{ or }\quad \sum_{i:V^{i,R}_t=W_t} \rho_{i,R}\le -2.
\end{equation*}
In the case of a single boundary force point, the existence of a unique solution to~\eqref{eqn::slesde} can be derived by relating $W_t-V_t$ to a Bessel process; see \cite{SHE_CLE}.  The almost sure continuity of the $\SLE_\kappa(\ul{\rho})$ processes up until the continuation threshold is reached is proved in \cite[Theorem~1.3]{MS_IMAG}.  It is possible to make sense of the solution to~\eqref{eqn::slesde} even after the continuation threshold is reached.  These processes are analyzed and shown to be continuous in \cite{MS_LIGHTCONE,cle_percolations} (see also \cite{SHE_CLE}).

\subsection{Radon-Nikodym derivatives}
\label{subsec::rn}

Let $c = (D,z_0,\ul{x}_L,\ul{x}_R,z_\infty)$ be a configuration consisting of a Jordan domain~$D$ in~$\C$ with~$\ell + r + 2$ marked points on $\partial D$.  An $\SLE_\kappa(\ul{\rho}_L;\ul{\rho}_R)$ process~$\eta$ with configuration $c$ is given by the image of an $\SLE_\kappa(\ul{\rho}_L;\ul{\rho}_R)$ process $\wt{\eta}$ in $\h$ which takes the force points of $\wt{\eta}$ to those of $\eta$.  Suppose that $c = (D,z_0,\ul{x}_L,\ul{x}_R,z_\infty)$ and $\wt{c} = (\wt{D},z_0,\wt{\ul{x}}_L,\wt{\ul{x}}_R,\wt{z}_\infty)$ are two configurations such that $\wt{D}$ agrees with $D$ in a neighborhood $U$ of $z_0$.  Let $\mu_c^U$ denote the law of an $\SLE_\kappa(\ul{\rho}_L;\ul{\rho}_R)$ process in $c$ stopped at the first time $\tau$ that it exits $U$ and define $\mu_{\wt{c}}^U$ analogously.  The following estimate is a restatement of \cite[Lemma~2.8]{MW_INTERSECTIONS} which, in turn, is based on extending \cite[Lemma~13]{DUB_DUAL} to the setting of boundary-intersecting $\SLE_\kappa(\ul{\rho})$ processes using the $\SLE$/GFF coupling.

\begin{lemma}
\label{lem::rn}
Assume that we have the setup described just above where $D = \h$, $\wt{D} \subseteq \h$, $U \subseteq \h$ is bounded, and $z_0 = 0$.  Fix $\zeta > 0$ and suppose that the distance between $U$ and $\h \setminus \wt{D}$ is at least $\zeta$, the force points of $c$, $\wt{c}$ in $\ol{U}$ are identical, the corresponding weights are also equal, and the force points which are outside of $U$ are at distance at least $\zeta$ from $U$.  There exists a constant $C \geq 1$ depending on $U$, $\zeta$, $\kappa$, and the weights of the force points such that
\[ \frac{1}{C} \leq \frac{d\mu_{\wt{c}}^U}{d\mu_c^U} \leq C.\]
\end{lemma}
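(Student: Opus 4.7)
The plan is to follow the strategy of \cite[Lemma~13]{DUB_DUAL}, extended to boundary-intersecting $\SLE_\kappa(\ul\rho)$ via the imaginary geometry coupling. Both $\mu_c^U$ and $\mu_{\wt c}^U$ admit an explicit description as the law of a Loewner chain driven by the system~\eqref{eqn::slesde}; equivalently, by the $\SLE$/GFF coupling of \cite{MS_IMAG}, each is the law of the flow line from $z_0$ of a GFF ($h_c$ or $h_{\wt c}$) on the corresponding domain, whose piecewise constant boundary data encodes the positions and weights of the force points.

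First I would couple the two GFFs so that, on $U$, we have $h_{\wt c} = h_c + F$, where $F$ is a deterministic harmonic function on a neighborhood of $\ol U$. Such a coupling exists because the two configurations agree on $\ol U$---same domain, same force points, same weights---so the boundary data of $h_c$ and $h_{\wt c}$ differ only at points at distance at least $\zeta$ from $U$. A standard Poisson kernel estimate then bounds $F$ and its derivatives on $\ol U$ in terms of $U$, $\zeta$, $\kappa$, and the weights. Translated back to~\eqref{eqn::slesde}, writing the two driving processes as $dW_t = \sqrt{\kappa}\,dB_t + a^c_t\,dt$ and $dW_t = \sqrt{\kappa}\,dB_t + a^{\wt c}_t\,dt$, the singular Bessel-type drifts from the force points inside $\ol U$ are identical under $\mu_c^U$ and $\mu_{\wt c}^U$, so they cancel exactly in $a^{\wt c}_t - a^c_t$, leaving only a smooth, uniformly bounded contribution which is essentially the restriction to $\R$ of $\partial_x F$ transported by the Loewner map.

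Next I would apply Girsanov together with \Ito/'s formula, as in \cite{DUB_DUAL}: the Radon-Nikodym derivative is an exponential martingale whose exponent can be rewritten (by identifying $F$ with the logarithm of the appropriate $\SLE$ partition function and integrating by parts) as a deterministic and uniformly bounded functional of $W|_{[0,\tau]}$, yielding the claimed two-sided bound. Here one uses that $U$ is bounded, so the half-plane capacity time $\tau$ is deterministically bounded in terms of $\diam(U)$, and that the boundary terms produced by \Ito/ are bounded in terms of the $C^1$-norm of $F$ on $\ol U$.

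The main obstacle I anticipate is the boundary-intersecting regime, where the drift $\rho_{i,q}/(W_t - V_t^{i,q})$ blows up as $W_t$ approaches a force point, so a naive Girsanov estimate would diverge. The crucial cancellation just described handles this: the singular drifts come only from force points in $\ol U$, which are shared between the two configurations and thus drop out of the drift difference, leaving a smooth integrand to which Girsanov applies cleanly. One must also check that the coupling $h_{\wt c} = h_c + F$ on $U$ is compatible with the flow-line stopping time $\tau$, which is an exit time of a local set in the sense of \cite{MS_IMAG}; this transfer is provided by the local-set framework developed in \cite{MS_IMAG,MS_IMAG2}.
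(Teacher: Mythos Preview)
Your sketch is correct and follows exactly the route the paper points to: the paper's own proof is simply the citation ``See \cite[Lemma~2.7 and Lemma~2.8]{MW_INTERSECTIONS} as well as \cite[Lemma~13]{DUB_DUAL},'' and the preceding paragraph already says the statement is a restatement of \cite[Lemma~2.8]{MW_INTERSECTIONS}, which extends \cite[Lemma~13]{DUB_DUAL} to the boundary-intersecting setting via the $\SLE$/GFF coupling. Your outline---couple the two GFFs so they differ on $U$ by a bounded harmonic function, observe that the singular Bessel drifts from the shared force points cancel in the drift difference, and then invoke Girsanov/local-set machinery---is precisely that argument.

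One small remark on presentation: in the cited references the cleanest way to finish is purely on the GFF side, using that the law of $h_c|_U$ and $h_{\wt c}|_U$ are mutually absolutely continuous with bounded Radon--Nikodym derivative (Cameron--Martin for the GFF, since $F$ has bounded Dirichlet energy on a neighborhood of $\ol U$), together with the fact that the flow line stopped at $\tau$ is a measurable function of the field restricted to $U$. This bypasses the need to unwind \Ito/ at the SDE level and makes the handling of the boundary-intersecting regime automatic, since the flow-line-as-local-set statement in \cite{MS_IMAG} already covers that case. Your SDE translation is not wrong, but it is a slightly longer path to the same bound.
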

\begin{proof}
See \cite[Lemma~2.7 and Lemma~2.8]{MW_INTERSECTIONS} as well as \cite[Lemma~13]{DUB_DUAL}.
\end{proof}

\subsection{Estimates for conformal maps}
\label{subsec::conformal}

Throughout, we will make frequent use of the following three estimates for conformal maps.  The first is \cite[Corollary~3.18]{LAW05}:

\begin{lemma}
\label{lem::distortion}
Suppose that $f \colon D \to D'$ is a conformal transformation with $f(z) = z'$.  Then
\[ \frac{d'}{4d} \leq |f'(z)| \leq \frac{4d'}{d}\]
where $d = \dist(z,\partial D)$ and $d' = \dist(z',\partial D')$.
\end{lemma}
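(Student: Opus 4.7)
The plan is to deduce this lemma from the Koebe $1/4$ theorem, which I will apply once to $f$ itself and once to its inverse. The key fact to invoke is: any univalent function $g\colon \mathbb{D}\to \C$ with $g(0)=0$ and $g'(0)=1$ satisfies $g(\mathbb{D})\supseteq B(0,1/4)$.

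First I would establish the upper bound $|f'(z)|\le 4d'/d$. By definition of $d=\dist(z,\partial D)$, the ball $B(z,d)$ lies in $D$. Define the normalized map $g\colon \mathbb{D}\to\C$ by
\[
g(w) = \frac{f(z+dw)-z'}{d\,f'(z)}.
\]
This $g$ is univalent with $g(0)=0$ and $g'(0)=1$, so the Koebe $1/4$ theorem yields $g(\mathbb{D})\supseteq B(0,1/4)$. Unraveling the definition of $g$, this says $f(B(z,d))\supseteq B\bigl(z',\tfrac{1}{4}d|f'(z)|\bigr)$. But $f(B(z,d))\subseteq f(D)=D'$, so $B\bigl(z',\tfrac{1}{4}d|f'(z)|\bigr)\subseteq D'$, which forces $d'\ge \tfrac{1}{4}d|f'(z)|$ and hence $|f'(z)|\le 4d'/d$.

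For the lower bound $|f'(z)|\ge d'/(4d)$, I would apply the same argument to the inverse map $f^{-1}\colon D'\to D$, noting that $(f^{-1})'(z') = 1/f'(z)$. This gives $d\ge \tfrac{1}{4}d'\cdot|(f^{-1})'(z')| = d'/(4|f'(z)|)$, which rearranges to $|f'(z)|\ge d'/(4d)$.

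There is no real obstacle here: the lemma is essentially a restatement of Koebe's theorem together with its dual applied to $f^{-1}$, and the bookkeeping is straightforward. The only subtlety to verify is that the normalization of $g$ is truly defined on the unit disk (which is why we rescale by $d$, the largest admissible radius), and that $f'(z)\neq 0$ so that the division is legitimate—but this is automatic since $f$ is conformal.
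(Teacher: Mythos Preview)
Your proof is correct: this is precisely the standard derivation of the two-sided Koebe distortion bound, obtained by applying the Koebe $1/4$ theorem once to $f$ (restricted to $B(z,d)$ and suitably normalized) and once to $f^{-1}$ (restricted to $B(z',d')$). The paper itself does not give a proof of this lemma at all---it simply records the statement and cites \cite[Corollary~3.18]{LAW05}---so your argument is not so much a different route as it is the standard argument behind the cited reference.
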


\noindent The second is \cite[Corollary~3.23]{LAW05}:

\begin{lemma}
\label{lem::ball_size}
Suppose that $f \colon D \to D'$ is a conformal transformation with $f(z) = z'$.  For all $r \in (0,1)$ and all $|w-z| \leq r \times \dist(z,\partial D)$, we have that
\[ |f(w) - z'| \leq \frac{4|w-z|}{1-r^2} \times \frac{\dist(z',\partial D')}{\dist(z,\partial D)}.\]
\end{lemma}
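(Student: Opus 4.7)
The plan is to reduce the inequality to the classical Koebe growth theorem and Koebe one-quarter theorem for univalent functions on the unit disk by a change of coordinates. Since both sides of the stated inequality transform compatibly under translations of $D$ and $D'$ and under dilations of $D$, I would first normalize so that $z = 0$, $z' = f(0) = 0$, and $\dist(0, \partial D) = 1$; then $\D \subset D$, and I write $d' := \dist(0, \partial D')$. In this normalization the target reduces to $|f(w)| \leq 4 |w| d'/(1-r^2)$ for $|w| \leq r < 1$.

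Let $g := f|_{\D}$, which is univalent on $\D$ with $g(0) = 0$, and let $h(\zeta) := g(\zeta)/g'(0)$; then $h$ is univalent on $\D$ with $h(0) = 0$ and $h'(0) = 1$, the standard hypothesis for Koebe's theorems. Two classical inputs then combine. First, the Koebe one-quarter theorem applied to $g$ yields $B(0, |g'(0)|/4) \subset g(\D) \subset D'$, so that $|g'(0)| \leq 4 d'$. Second, the Koebe growth theorem applied to $h$ gives $|h(\zeta)| \leq |\zeta|/(1-|\zeta|)^2$ for $|\zeta| < 1$. Multiplying produces $|f(w)| \leq 4 d' |w|/(1-|w|)^2$ throughout $\D$, and specializing to $|w| \leq r$ gives the claimed inequality up to the precise shape of the denominator.

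I do not anticipate a real obstacle: everything here is classical univalent function theory. The only step requiring bookkeeping is the passage from the growth-theorem denominator $(1-|w|)^2$ to the M\"obius-invariant form $1-r^2$ in the lemma; the natural route is via the hyperbolic-metric version of Koebe's theorems, where pre-composing $h$ with a disk automorphism sending $0$ to a point of modulus $r$ automatically produces the combination $(1-|\zeta|)(1+|\zeta|) = 1 - |\zeta|^2$ as the invariant denominator, after which one reads off the estimate with $1-r^2$. The core of the argument is thus: normalize, apply the Koebe one-quarter theorem to get $|g'(0)| \leq 4 d'$, apply the growth theorem to the standardly normalized $h$, and then tidy up constants; the explicit bookkeeping is carried out in \cite{LAW05}.
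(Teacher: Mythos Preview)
The paper does not prove this lemma; it simply cites \cite[Corollary~3.23]{LAW05}. Your approach---normalize so that $z=0$, $z'=0$, $\dist(0,\partial D)=1$, restrict $f$ to $\D$, apply the Koebe one-quarter theorem to bound $|f'(0)| \le 4d'$, and apply the growth theorem $|h(\zeta)| \le |\zeta|/(1-|\zeta|)^2$---is exactly the standard argument one finds in Lawler, and it is correct as far as it goes. It yields
\[
|f(w)| \le \frac{4|w|\,d'}{(1-|w|)^2} \le \frac{4|w|\,d'}{(1-r)^2} \quad\text{for } |w|\le r,
\]
which is the right inequality.

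The gap is in your final paragraph. You propose to upgrade the denominator from $(1-r)^2$ to $1-r^2$ via ``the hyperbolic-metric version of Koebe's theorems,'' treating this as mere bookkeeping. It is not: the stronger inequality with $1-r^2$ is \emph{false}. Take the Koebe function $K(\zeta)=\zeta/(1-\zeta)^2$ mapping $\D$ onto $\C\setminus(-\infty,-1/4]$, so $z=z'=0$, $\dist(0,\partial D)=1$, $\dist(0,\partial D')=1/4$. At $w=r=1/2$ one has $K(1/2)=2$, while the claimed bound gives $\tfrac{4\cdot(1/2)}{1-1/4}\cdot\tfrac{1}{4}=\tfrac{2}{3}$. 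The growth-theorem bound with $(1-r)^2$ is sharp for $K$, so no rearrangement of the classical inputs will produce $1-r^2$.

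In short: your argument is correct and complete for the estimate with $(1-r)^2$ in the denominator, which is what the growth theorem actually gives and what suffices for every application in the paper (only $r=\tfrac12$ is ever used, and only the existence of a finite constant matters). The lemma as printed contains a typo; do not try to prove it as stated.
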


\noindent Finally, we state the Beurling estimate \cite[Theorem~3.76]{LAW05} which we will frequently use in conjunction with the conformal invariance of Brownian motion.

\begin{theorem}[Beurling Estimate]
\label{thm::beurling}
Suppose that $B$ is a Brownian motion in $\C$ and $\tau_\D =\inf\{t \geq 0: B(t) \in \partial \D\}$.  There exists a constant $c < \infty$ such that if $\gamma \colon [0,1] \to \C$ is a curve with $\gamma(0) = 0$ and $|\gamma(1)| = 1$, $z \in \D$, and $\p^z$ is the law of $B$ when started at $z$, then
\[ \p^z[ B([0,\tau_\D]) \cap \gamma([0,1]) = \emptyset] \leq c |z|^{1/2}.\]
\end{theorem}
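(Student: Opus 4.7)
The plan is to deduce the estimate from Beurling's projection theorem, which I would invoke as a black box from \cite[Section~3.8]{LAW05}. Set $K = \gamma([0,1])$, a compact connected subset of $\ol{\D}$ that meets both $\{0\}$ and $\partial\D$. Beurling's projection theorem says that the harmonic measure of $\partial\D$ seen from $z$ in $\D\setminus K$ is dominated by the analogous harmonic measure when $K$ is replaced by its circular projection $K^\ast = \{-|w|:w\in K\}\subset[-1,0]$ and $z$ by $-|z|$. Since $K$ is connected and meets both $\{0\}$ and $\partial\D$, we have $K^\ast=[-1,0]$, so the problem reduces to the explicit slit-disk case with starting point $-|z|$, understood as a boundary limit from one side of the slit.

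Next I would apply the principal branch $\phi(w)=\sqrt{w}$, which is a conformal isomorphism from $\D\setminus[-1,0]$ onto the right half-disk $H := \{w\in\D:\Re w>0\}$. Under $\phi$, the slit $[-1,0]$ maps to the vertical diameter $[-i,i]$; the unit circle maps to the arc $A := \ol{H}\cap\partial\D$; and the starting point $-|z|$ (approached from above the slit) maps to $w_0 = i|z|^{1/2}$. By conformal invariance of Brownian motion, the reduced problem is to show $u(w_0)\leq c|z|^{1/2}$, where $u(w) := \omega(w,A,H)$ is the harmonic measure of $A$ in $H$ seen from $w$.

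The final step is the pointwise bound $u(w)\leq C|w|$ for all $w\in H$. The function $u$ is harmonic in $H$, vanishes on the open diameter $(-i,i)$, and equals $1$ on $A$. By Schwarz reflection across the imaginary axis, $u$ extends to a bounded harmonic function on $\D\setminus\{\pm i\}$ which is real-analytic across $(-i,i)$; since $u$ vanishes there, $u(w) = (\partial_x u)(i\Im w)\cdot\Re w+O((\Re w)^2)$. Combining with the Schwarz lemma applied to a conformal map $H\to\D$ sending $0\mapsto 0$ yields $u(w)\leq C_1\Re w$ uniformly on $\{w\in H:|w\mp i|\geq 1/2\}$; on the complementary region we have $|w|\geq 1/2$, so the trivial bound $u\leq 1\leq 2|w|$ suffices. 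Evaluating at $w_0$ gives $u(w_0)\leq C|z|^{1/2}$, and unwinding Step 1 produces the Beurling estimate.

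The hard part is really the invocation of Beurling's projection theorem itself, whose proof proceeds via symmetrization (polarization or circular symmetrization of the set $K$); I would not attempt to reprove it and would instead defer entirely to the treatment in \cite{LAW05}. Once Beurling's projection is granted, the remaining two steps are a routine conformal map plus a boundary Schwarz-lemma estimate, and the sharp exponent $1/2$ appears as the ramification order of the square root at the tip of the slit.
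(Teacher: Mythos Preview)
The paper does not prove this statement at all: it merely records it as \cite[Theorem~3.76]{LAW05} and uses it as a black box. So there is nothing to compare against, and your sketch is in fact the standard derivation from the projection theorem that Lawler gives.

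That said, your Step~1 misstates the projection theorem, and the error propagates. Beurling's projection theorem (in the form in \cite[Section~3.8]{LAW05}) says that
\[
\p^{z}\big[B([0,\tau_\D])\cap K=\emptyset\big]\ \le\ \p^{\,|z|}\big[B([0,\tau_\D])\cap[-1,0]=\emptyset\big],
\]
i.e.\ the circular projection $K^\ast$ lies on one radius and the starting point is moved to the \emph{diametrically opposite} radius. You instead send $z$ to $-|z|\in K^\ast$, which forces you to interpret the starting point as a prime end on the slit. Under $\phi(w)=\sqrt{w}$ that prime end maps to $i|z|^{1/2}$, which lies on the diameter $[-i,i]\subset\partial H$, so $u(w_0)=0$ and your Step~3 becomes vacuous rather than furnishing the bound. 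With the correct formulation the starting point is $+|z|$, hence $w_0=|z|^{1/2}\in H$, and then your Step~3 argument (Schwarz reflection across the diameter giving a bounded harmonic function on $\D$ vanishing at $0$, hence $u(w)\le C|w|$ near $0$, together with the trivial bound $u\le 1\le 2|w|$ for $|w|\ge\tfrac12$) goes through and yields $u(|z|^{1/2})\le C|z|^{1/2}$ as required.
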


\subsection{Imaginary geometry review}
\label{subsec:ig_review}

Throughout this work, we assume that the reader is familiar with the GFF as well as with imaginary geometry.  We refer the reader to \cite{SHE06} for a more in-depth introduction to the former and to \cite{MS_IMAG} for more on the latter.  For the convenience of the reader, we will review some of the results from \cite{MS_IMAG} which will be used repeatedly throughout the present work.

We begin by describing the coupling of the $\SLE_\kappa(\ul{\rho})$ processes as flow lines of the GFF.  Fix $\kappa \in (0,4)$ and let $\kappa'=\tfrac{16}{\kappa} \in (4,\infty)$.  Recall the constants $\lambda=\tfrac{\pi}{\sqrt{\kappa}}$, $\chi = \tfrac{2}{\sqrt{\kappa}}-\tfrac{\sqrt{\kappa}}{2}$, and $\lambda' = \tfrac{\pi}{\sqrt{\kappa'}} = \lambda - \tfrac{\pi}{2} \chi$ as defined in~\eqref{eqn:ig_constants}.

Suppose that $\eta$ is an $\SLE_\kappa(\ul{\rho})$ process in $\h$ from $0$ to $\infty$.  Then $\eta$ can be coupled with a GFF $h$ on $\h$ so that it may be interpreted as a flow line of the (formal) vector field $e^{ih/\chi}$.  The boundary data for $h$ is given by
\begin{align*}
   -\lambda \left(1+\sum_{i=0}^j  \rho_{i,L}\right) \quad\text{on}\quad (x_{j+1,L},x_{j,L}] \quad\text{and}\quad \lambda\left(1+ \sum_{i=0}^j  \rho_{i,R}\right) \quad\text{on}\quad (x_{j,R},x_{j+1,R}].
\end{align*}
Here, we have taken $\rho_{0,L} = \rho_{0,R} = 0$, $x_{\ell+1,L}=-\infty$, and $x_{r+1,R} = +\infty$.  On the left (resp.\ right) hand side, $j$ varies between $0$ and $\ell$ (resp.\ $r$).  If $(g_t)$ denotes the Loewner evolution associated with $\eta$ and $V_t^{i,q}$ denotes the evolution of the force points, then we have for each stopping time $\tau$ for $\eta$ which is almost surely finite and before the continuation threshold is hit that $h \circ g_\tau^{-1} - \chi \arg (g_\tau^{-1})'$ is a GFF on $\h$ with boundary conditions given by
\begin{align*}
   -\lambda\left(1+ \sum_{i=0}^j \rho_{i,L}\right) \quad\text{on}\quad (V_\tau^{j+1,L},V_\tau^{j,L}] \quad\text{and}\quad \lambda \left(1+\sum_{i=0}^j \rho_{i,R}\right) \quad\text{on}\quad (V_\tau^{j,R},V_\tau^{j+1,R}].
\end{align*}
Here, we take $V_\tau^{0,L} = V_\tau^{0,R} = W_\tau$, $V_\tau^{\ell+1,L}=-\infty$, and $V_\tau^{r+1,R} = +\infty$.  As before, on the left (resp.\ right) hand side, $j$ varies between $0$ and $\ell$ (resp.\ $r$).

\begin{figure}
\begin{center}
\includegraphics[scale=0.85]{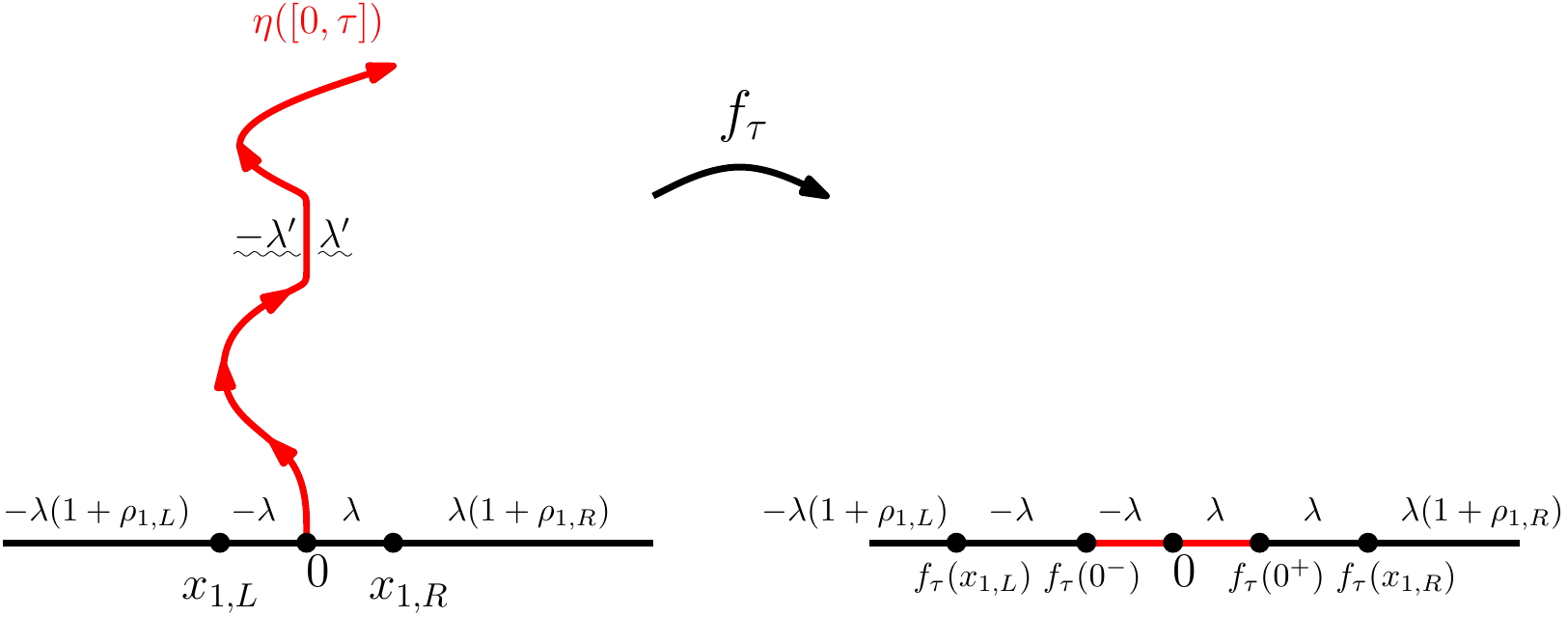}	
\end{center}
\vspace{-0.03\textheight}
\caption{\label{fig:bd} An illustration of the coupling of an $\SLE_\kappa(\ul{\rho})$ process as the flow line of a GFF $h$ on $\h$ from $0$ to $\infty$.  In this case, $\ell=r=1$ (i.e., the process has one force point on each side of the seed).  The conformal map $f_\tau \colon \h \setminus \eta([0,\tau]) \to \h$ shown is given by $f_\tau = g_\tau - W_\tau$.}
\end{figure}

See Figure~\ref{fig:bd} for an illustration in the case that $\ell=r=1$.

If $D$ is a simply connected domain and $x,y \in \partial D$ are distinct, then one can also realize the flow line of a GFF $h$ on $D$ as an $\SLE_\kappa(\ul{\rho})$ process from $x$ to $y$ provided one chooses the boundary data for $h$ appropriately.  Namely, one needs to take
\begin{equation}
\label{eqn:ig_change_of_coordinates}
h = \wt{h} \circ \varphi^{-1} - \chi \arg (\varphi^{-1})'
\end{equation}
where $\varphi \colon \h \to D$ is a conformal transformation with $\varphi(0) = x$ and $\varphi(x) = \infty$ and $\wt{h}$ is a GFF on $\h$ whose flow line from $0$ to $\infty$ is an $\SLE_\kappa(\ul{\rho})$ process.

The formula~\eqref{eqn:ig_change_of_coordinates} is the change of coordinates formula for imaginary geometry.

One can also consider flow lines of a GFF with different angles.  More specifically, the flow line of a GFF $h$ with angle $\theta$ is the $\SLE_\kappa(\ul{\rho})$ process coupled with the field $h+ \theta \chi$.  It has the interpretation as being the flow line of the (formal) vector field $e^{i(h/\chi + \theta)}$ (i.e., where all of the arrows have been rotated by the angle $\theta$).

In \cite{MS_IMAG}, it is described how flow lines with different angles and starting points interact with each other.  In particular, if $\eta_1,\eta_2$ are flow lines of a GFF $h$ on $\h$ starting from $x_1 \leq x_2$ with angles $\theta_1,\theta_2$, then \cite[Theorem~1.5]{MS_IMAG} implies that:
\begin{itemize}
\item $\eta_1$ stays to the left of $\eta_2$ if $\theta_1 > \theta_2$.  If $\theta_1 \in (\theta_2,\theta_2+2\lambda'/\chi)$, then $\eta_1$ can intersect and bounce off $\eta_2$.  If $\theta_1 \geq \theta_2+2\lambda'/\chi$, then $\eta_1$ and $\eta_2$ do not intersect.  (Note that $2\lambda'/\chi = \pi \kappa / (4-\kappa)$.)
\item $\eta_1$ merges with $\eta_2$ (and the paths do not subsequently separate) upon their first intersection if $\theta_1=\theta_2$.
\item $\eta_1$ crosses (and does not cross back but may bounce off) $\eta_2$ from left to right upon intersecting if $\theta_1 \in (\theta_2-\pi,\theta_2)$.
\end{itemize}
Thus to determine the manner in which flow lines interact with each other, one needs to compute the difference between their angles and then check in which of the aforementioned three ranges the difference falls into.

\begin{figure}
\begin{center}
\includegraphics[scale=0.85]{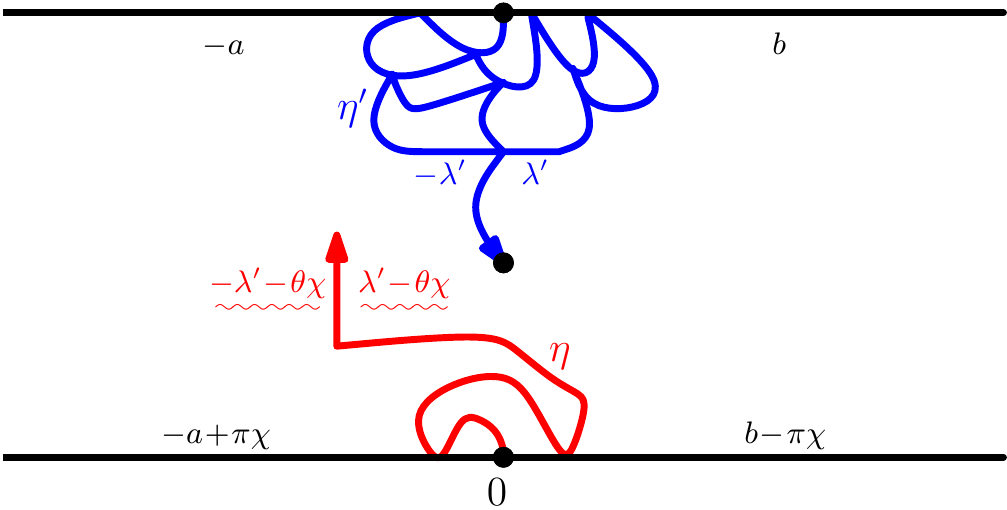}
\end{center}
\vspace{-0.03\textheight}
\caption{\label{fig:cfl} Illustration of a counterflow line $\eta'$ and a flow line $\eta$ with angle $\theta$ on the strip $\R \times [0,\pi]$.  In this case, $\eta'$ is an $\SLE_{\kappa'}(\rho_L';\rho_R')$ process with $\rho_L' = \tfrac{b}{\lambda'}-1$ and $\rho_R' = \tfrac{a}{\lambda'}-1$ and $\eta$ is an $\SLE_\kappa(\rho_L;\rho_R)$ with $\rho_L = \tfrac{a-\theta\chi}{\lambda} + \tfrac{\kappa}{2}-3$ and $\rho_R = \tfrac{b+\theta\chi}{\lambda} + \tfrac{\kappa}{2}-3$.  If $\theta=-\tfrac{\pi}{2}$, then $\eta$ is equal to the right boundary of $\eta'$ and if $\theta =\tfrac{\pi}{2}$, then $\eta$ is equal to the left boundary of $\eta'$.  For all intermediate values of $\theta \in [-\tfrac{\pi}{2},\tfrac{\pi}{2}]$, $\eta$ is contained in the range of $\eta'$ and $\eta'$ visits the range of $\eta$ in the opposite order in which the points are drawn by $\eta$.  If $\theta \notin [-\tfrac{\pi}{2},\tfrac{\pi}{2}]$, then $\eta$ is either to the left or right of $\eta'$.}	
\end{figure}

An $\SLE_{\kappa'}(\ul{\rho}')$ process $\eta'$ can similarly be coupled with a GFF $h$ on $\h$.  In this case, the boundary data is given by
\begin{align*}
   \lambda'\left(1+ \sum_{i=0}^j  \rho_{i,L}'\right) \quad\text{on}\quad (x_{j+1,L},x_{j,L}] \quad\text{and}\quad -\lambda'\left(1+ \sum_{i=0}^j  \rho_{i,R}'\right) \quad\text{on}\quad (x_{j,R},x_{j+1,R}].
\end{align*}
Such an $\SLE_{\kappa'}(\ul{\rho}')$ process is referred to as a {\bf counterflow line} of $h$, the reason being that it can be realized as a tree of flow lines.  One makes sense of $\SLE_{\kappa'}(\ul{\rho}')$ processes coupled with GFFs on other domains $D$ as counterflow lines using the change of coordinates formula~\eqref{eqn:ig_change_of_coordinates}.  Here, $\chi = \tfrac{\sqrt{\kappa'}}{2} - \tfrac{2}{\sqrt{\kappa'}}$.  Note that this is the same as the value of $\chi$ associated with $\kappa=\tfrac{16}{\kappa'}$.  It is often convenient to apply the change of coordinates $z \mapsto -1/z$ so that the counterflow line grows from $\infty$ to $0$.  In this case, the left (resp.\ right) boundary of $\eta'$ is given by the flow line of $h$ from $0$ to $\infty$ with angle $\tfrac{\pi}{2}$ (resp.\ $-\tfrac{\pi}{2}$) \cite[Theorem~1.4]{MS_IMAG}.  More generally, it follows from \cite[Theorem~1.4]{MS_IMAG} that the entire range of $\eta'$ can be realized as the light cone of flow lines starting from $0$ which are allowed to change angles but with angle always constrained to be in $[-\tfrac{\pi}{2},\tfrac{\pi}{2}]$.

When illustrating GFF flow lines, it is often convenient to use the notation $\uwave{x}$ to indicate the boundary data for the GFF.  It is used to indicate the boundary data along a flat segment of the domain boundary and means that the boundary data changes according to $\chi$ times its winding relative to the flat part.  This notation is described in detail in \cite[Figure~1.10]{MS_IMAG}.

\section{Upper bound}
\label{sec::upper_bound}

In this section, we will prove the upper bound of Theorem~\ref{thm::lightcone_dimension} and Theorem~\ref{thm::sle_kappa_rho_dimension}.  We will then explain how to extract Theorem~\ref{thm::fan_dimension} (in its entirety) from the upper bound.  We will begin in Section~\ref{subsec::derivative_estimate} by recording an estimate of the moments of the derivative of the Loewner map when an $\SLE_\kappa$ process gets close to a given point (Proposition~\ref{prop::derivative_estimate}).  Next, in Section~\ref{subsec::martingale_conditioning} we will derive the exponent for the probability that two flow lines of the GFF (with a particular choice of boundary data) starting from $\pm \tfrac{1}{2} \epsilon$ do not intersect before hitting $\partial \D$ as $\epsilon \to 0$ (Lemma~\ref{lem::non_intersection_exponent}).  We will then combine these results to establish the upper bounds for the dimensions in Section~\ref{subsec::dimension_upper_bound}.  

\label{subsec::overview}

\begin{figure}[ht!]
\begin{center}
\includegraphics[scale=0.85]{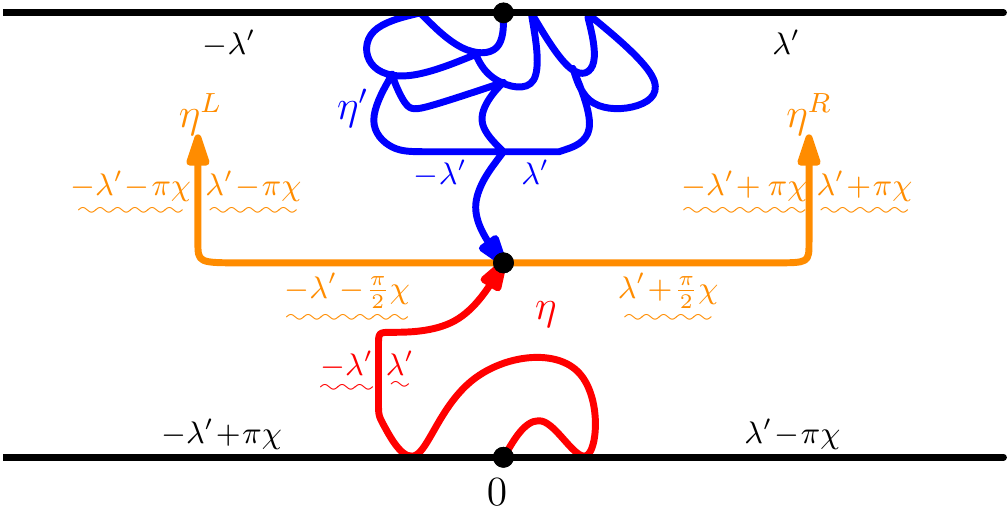}
\end{center}
\vspace{-0.03\textheight}
\caption{\label{fig::flowline_hit_point}
Suppose that $h$ is a GFF on the strip $\hstrip = \R \times [0,1]$ with the illustrated boundary data.  Then the counterflow line $\eta'$ of $h$ starting from $i$ is an $\SLE_{\kappa'}$ process and the flow line starting from $0$ is an $\SLE_\kappa(\tfrac{3\kappa}{4}-3;\tfrac{3\kappa}{4}-3)$ process.  (Note that $\eta$ hits both $\R_-$ and $\R_+$ since $\tfrac{3\kappa}{4}-3 < \tfrac{\kappa}{2}-2$ for all $\kappa \in (0,4)$.)  Since the range of $\eta'$ almost surely contains the range of $\eta$, in order for $\eta$ to hit a given point $z$, it must be that $\eta'$ hits $z$.  Moreover, on the event that $\eta$ hits $z$, the flow lines $\eta^L$ and $\eta^R$ starting from $z$ with angles $\pi$ and $-\pi$, respectively, stay to the left and right of $\eta$, respectively, and the left side of $\eta^L$ does not intersect the right side of $\eta^R$.  The converse statement also holds: if $\eta'$ hits a given point $z$ and the left side of the flow line $\eta^L$ starting from $z$ with angle $\pi$ does not intersect the right side of the flow line $\eta^R$ starting from $z$ with angle $-\pi$, then $\eta$ must hit $z$ because $\eta$ cannot hit the left (resp.~right) side of $\eta^L$ (resp.~$\eta^R$).
}
\end{figure}

\begin{figure}[ht!]
\begin{center}
\includegraphics[scale=0.85]{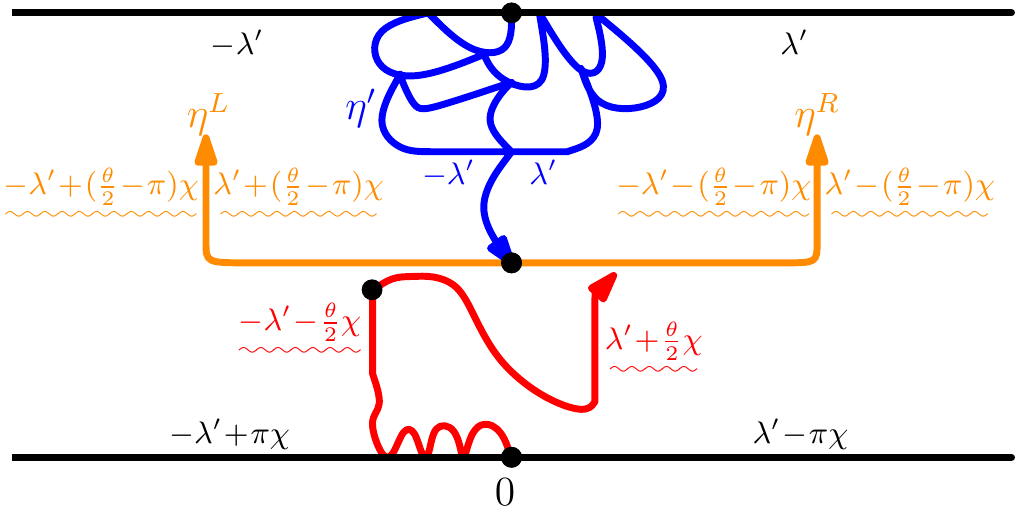}
\end{center}
\vspace{-0.03\textheight}
\caption{\label{fig::lightcone_hit_point}
(Continuation of Figure~\ref{fig::flowline_hit_point}.)  In order for the light cone $\lightcone(\theta)$ of $h$ starting from $0$ with opening angle $\theta \in [0,\pi]$ to a hit point $z$, it must be that $\eta'$ hits $z$ and that the left side of the flow line $\eta^L$ starting from $z$ with angle $-\tfrac{\theta}{2}+\pi$ does not hit the right side of the flow line $\eta^R$ starting from $z$ with angle $\tfrac{\theta}{2}-\pi$.  Note that the angles $-\tfrac{\theta}{2}+\pi$ and $\tfrac{\theta}{2}-\pi$ are dual to the angles of the left and right sides of $\lightcone(\theta)$, respectively.  The converse statement also holds: if $\eta'$ hits a given point $z$ and the left side of the flow line $\eta^L$ with angle $-\tfrac{\theta}{2}+\pi$ starting from $z$ does not intersect the right side of the flow line $\eta^R$ with angle $\tfrac{\theta}{2}-\pi$ starting from $z$ then $\lightcone(\theta)$ must contain $z$.  The reason is that a flow line with angle $-\tfrac{\theta}{2}$ cannot hit the left side of $\eta^L$ and a flow line with angle $\tfrac{\theta}{2}$ cannot hit the right side of $\eta^R$.  Hence, an angle-varying flow line which gets arbitrarily close to $z$ can be generated by taking a path which starts off with angle $\tfrac{\theta}{2}$ until it gets very close to $\eta^L$, then travels with angle $-\tfrac{\theta}{2}$ until getting very close to $\eta^R$, etc.  Indeed, such a path will travel back and forth between $\eta^L$ and $\eta^R$ and get progressively closer to $z$ with each pass.  The figure shows such an angle-varying flow line in red where the angle changes from $ \tfrac{\theta}{2}$ to $-\tfrac{\theta}{2}$ at the black dot.
}
\end{figure}

\begin{figure}[ht!]
\begin{center}
\includegraphics[scale=0.85]{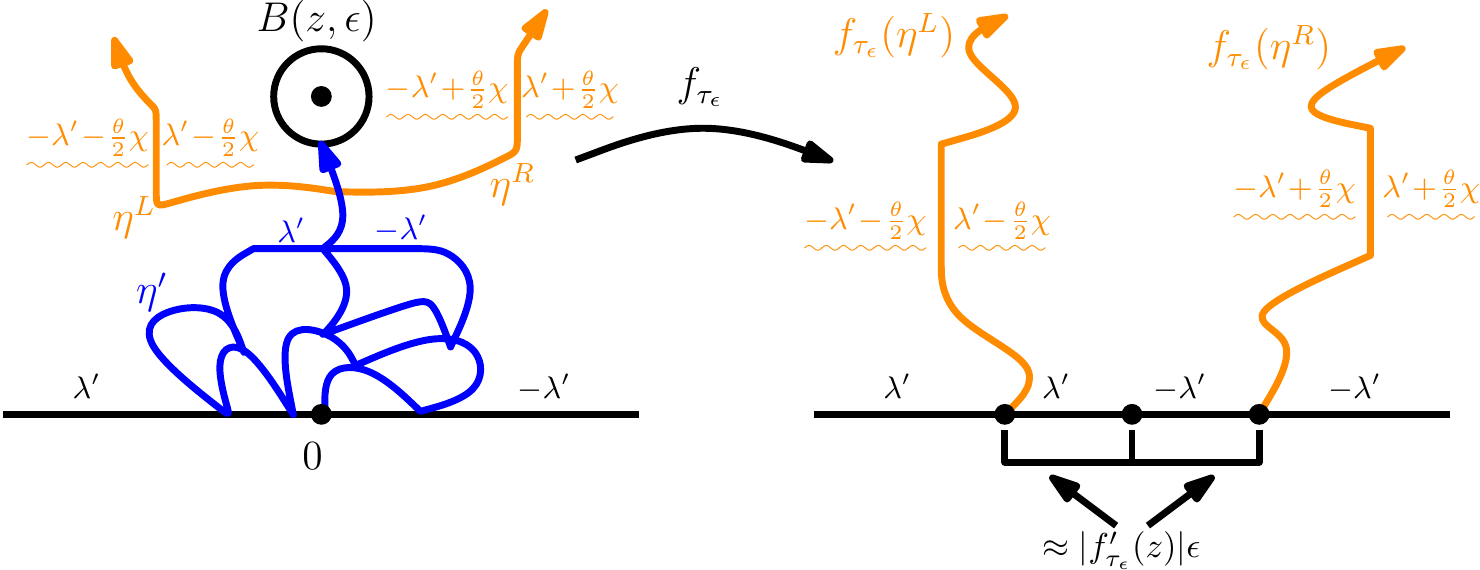}
\end{center}
\vspace{-0.03\textheight}
\caption{\label{fig::counterflowline_hit_conformal_maps}
This is a rotation of the picture in Figure~\ref{fig::flowline_hit_point} and Figure~\ref{fig::lightcone_hit_point} by $180$ degrees and conformally mapped so that we are working in $\h$.  Here, $h$ is a GFF on $\h$ with the boundary data shown, $\eta'$ is its counterflow line starting from $0$, and $\eta^L$ (resp.\ $\eta^R$) is the flow line with angle $\tfrac{\theta}{2}$ (resp.\ $-\tfrac{\theta}{2}$) starting from a point on the left (resp.\ right) side of $\eta'$.  To estimate the probability that $\lightcone(\theta)$ hits $\partial B(z,\epsilon)$ for a given point $z$, we first estimate the probability that $\eta'$ hits $\partial B(z,\epsilon)$, say for the first time at time $\tau_\epsilon$.  Given this, we estimate the probability that $\eta^L$ and $\eta^R$ both reach a macroscopic distance before hitting each other.  Let $(g_t)$ denote the chordal Loewner evolution of $\eta'$, $W$ its Loewner driving function, and let $f_t = g_t - W_t$ be its centered Loewner evolution.  We can estimate the latter probability by first conformally mapping away $\eta'|_{[0,\tau_\epsilon]}$ using $f_{\tau_\epsilon}$ and then estimating the probability that the images of $\eta^L$ and $\eta^R$ avoid each other until reaching a macroscopic size using Lemma~\ref{lem::non_intersection_exponent}.
}
\end{figure}

The main result of this section is the following proposition.

\begin{proposition}
\label{prop::dimension_upper_bound}
Suppose that $h$ is a GFF on $\h$ with piecewise constant boundary data which changes values at most a finite number of times.  Let $\lightcone(\theta)$ be the $\SLE_\kappa$ light cone ($\kappa \in (0,4)$) of $h$ starting from $0$ with opening angle $\theta \in [0,\pi]$.  Almost surely,
\[ \dimH(\lightcone(\theta)) \leq d(\kappa,\theta) \wedge 2\]
where $d(\kappa,\theta)$ is as in~\eqref{eqn::lightcone_dimension_formula}.  In particular, the dimension of an $\SLE_\kappa(\rho)$ process with $\rho \in ([\tfrac{\kappa}{2}-4) \vee (-2-\tfrac{\kappa}{2}),-2)$ is bounded from above by the expression in~\eqref{eqn::sle_kappa_rho_dimension}.
\end{proposition}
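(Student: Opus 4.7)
The plan is to prove the one-point estimate
\begin{equation*}
\prob[\lightcone(\theta) \cap B(z,\epsilon) \neq \emptyset] \leq C \epsilon^{2 - d(\kappa,\theta)}
\end{equation*}
uniformly for $z$ in a fixed compact subset of $\h$, and then combine it with a standard first-moment covering argument. Covering a bounded region by $O(\epsilon^{-2})$ balls of radius $\epsilon$ and summing the expectations shows that the expected number of balls intersecting $\lightcone(\theta)$ is $O(\epsilon^{-d(\kappa,\theta)})$, which via Markov's inequality yields $\dimH(\lightcone(\theta)) \leq d(\kappa,\theta)$ almost surely; the trivial bound $\leq 2$ handles the regime $d(\kappa,\theta) \geq 2$.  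By Lemma~\ref{lem::rn}, it suffices to prove the one-point estimate in the specific imaginary-geometry configuration of Figures~\ref{fig::flowline_hit_point}--\ref{fig::counterflowline_hit_conformal_maps}.

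The structural input, illustrated by Figures~\ref{fig::flowline_hit_point}--\ref{fig::lightcone_hit_point}, is that a point $w$ lies in $\lightcone(\theta)$ if and only if it is hit by the counterflow line $\eta'$ (an $\SLE_{\kappa'}$) and the two GFF flow lines $\eta^L_w$, $\eta^R_w$ emanating from $w$ with angles $\pi - \theta/2$ and $-\pi + \theta/2$ do not intersect one another.  The ``if'' direction is realized by an angle-varying flow line whose angle alternates between $\pm \theta/2$, bouncing between $\eta^L_w$ and $\eta^R_w$ and converging to $w$; the ``only if'' direction holds because no angle-varying flow line with angles in $[-\theta/2,\theta/2]$ can cross either of $\eta^L_w$, $\eta^R_w$.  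On the event that $\lightcone(\theta)$ enters $B(z,\epsilon)$, there is therefore a stopping time $\tau_\epsilon$ at which $\eta'$ first comes within distance $\epsilon$ of $z$; after mapping $\h \setminus \eta'([0,\tau_\epsilon])$ back to $\h$ via the centered Loewner map $f_{\tau_\epsilon}$, the images of the two flow lines emerge from a pair of boundary points whose separation is, by Lemmas~\ref{lem::distortion} and~\ref{lem::ball_size}, of order $\epsilon |f'_{\tau_\epsilon}(z)|$, and they must then avoid each other out to a macroscopic scale.

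The one-point bound factors accordingly as the product of (i) the probability that $\eta'$ enters $B(z,\epsilon)$, which via the derivative-moment estimate of Proposition~\ref{prop::derivative_estimate} contributes the Rohde--Schramm exponent $1 - 2/\kappa = 2 - d(\kappa,\pi)$ associated with $\dimH(\SLE_{\kappa'}) = 1 + \kappa'/8$, and (ii) the conditional probability that the two post-map flow lines do not meet, which by Lemma~\ref{lem::non_intersection_exponent} contributes $\epsilon^{d(\kappa,\pi) - d(\kappa,\theta)}$ with the exponent depending on $\theta$ through the angles $\pm(\pi - \theta/2)$.  Multiplying and using Proposition~\ref{prop::derivative_estimate} to reabsorb the derivative factor $|f'_{\tau_\epsilon}(z)|$ gives exactly $\epsilon^{2 - d(\kappa,\theta)}$; the Beurling estimate (Theorem~\ref{thm::beurling}) controls the atypical event that the hull approaches $z$ from an unfavorable angle and makes the conformal image geometry degenerate.

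The principal difficulty lies in Lemma~\ref{lem::non_intersection_exponent}: one must identify a Schramm--Wilson type martingale for the joint evolution of the Bessel-type coordinates tracking the tips of $\eta^L$ and $\eta^R$ and read off the closed-form exponent $d(\kappa,\pi) - d(\kappa,\theta)$ as a function of the two flow-line angles.  Once this lemma and Proposition~\ref{prop::derivative_estimate} are in hand, the remaining bookkeeping is routine.  The $\SLE_\kappa(\rho)$ statement of the proposition then follows at once from the main result of \cite{MS_LIGHTCONE}, which identifies the range of an $\SLE_\kappa(\rho)$ process in the stated interval with the light cone $\lightcone(\theta_\rho)$ via the relation~\eqref{eqn::lightcone_angle}; substituting $\theta_\rho$ into $d(\kappa,\theta)$ reproduces the expression in~\eqref{eqn::sle_kappa_rho_dimension}.
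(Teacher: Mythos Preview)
Your overall strategy coincides with the paper's: reduce to the configuration of Figures~\ref{fig::flowline_hit_point}--\ref{fig::counterflowline_hit_conformal_maps}, use the characterization of $\lightcone(\theta)$-points via the counterflow line $\eta'$ and the non-intersection of the auxiliary flow lines $\eta^L,\eta^R$, and combine the derivative moment (Proposition~\ref{prop::derivative_estimate}) with the non-intersection exponent (Lemma~\ref{lem::non_intersection_exponent}) to produce the exponent $2-d(\kappa,\theta)$.  Two points, however, are handled differently in the paper and are glossed over in your sketch.

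First, the factorization ``(i) $\times$ (ii)'' is only heuristic.  The probability that $\eta'$ enters $B(z,\epsilon)$ does not appear as a separate factor; for $\kappa\in(0,2]$ the process $\eta'$ is space-filling and that probability is $1$, so your claimed exponent $1-2/\kappa$ is negative there.  What actually happens (see \eqref{eqn::prob_decomp_sle}--\eqref{eqn::nu_choice}) is that the conditional non-intersection probability is $\asymp (\epsilon|f_{\tau_\epsilon}'(z)|)^\alpha$ with $\alpha$ as in \eqref{eqn::non_intersection_exponent}, and one then evaluates the single expectation $\E[|f_{\tau_\epsilon}'(z)|^\alpha\one_{\{\tau_\epsilon<\infty\}}]$ via Proposition~\ref{prop::derivative_estimate} by choosing $r$ so that $\nu(r)+r=\alpha$.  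Your phrase ``reabsorb the derivative factor'' suggests you see this, but the intermediate split into two separate exponents is misleading.

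Second, and more substantively, your sentence ``the Beurling estimate controls the atypical event that the hull approaches $z$ from an unfavorable angle'' hides the real work.  Lemma~\ref{lem::non_intersection_exponent} is only proved on the event $E_\delta$, which requires $\Theta_{\tau_\epsilon}^z\in(\delta,\pi-\delta)$ and that the flow lines avoid the small ball $\delta\epsilon\D$; when these fail the post-map geometry degenerates and no clean estimate is available.  The paper does \emph{not} attempt a uniform one-point bound.  Instead it proves (Lemma~\ref{lem::bad_hit_angle} and Lemma~\ref{lem::bad_hit_small_ball}) that the set of points where the bad behavior persists at all sufficiently small scales is almost surely empty, and then builds a cover $\CV_n^{\varsigma,\delta}=\bigcup_{m\ge n}\CU_m^{\varsigma,\delta}$ using only ``good'' dyadic squares across \emph{all} scales $m\ge n$.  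The two lemmas are exactly what guarantees that this multi-scale union covers $\lightcone(\theta)$.  A single-scale first-moment argument of the type you describe would require either those lemmas anyway or a genuinely different bound for the degenerate configurations, which Beurling alone does not provide.
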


We are now going to give an overview of how the estimates proved in this section will be used to establish Proposition~\ref{prop::dimension_upper_bound}.  Fix $\kappa \in (0,4)$ and $\theta \in [0,\pi]$.  If $d(\kappa,\theta) \geq 2$, then the upper bound given is trivially true.  Consequently, we may assume without loss of generality that $\theta \in [0,\pi]$ is such that $d(\kappa,\theta) < 2$.  We are going to prove the result by combining Proposition~\ref{prop::derivative_estimate} with Lemma~\ref{lem::non_intersection_exponent}.  For the proof of Proposition~\ref{prop::dimension_upper_bound} it will be more convenient to perform a change of coordinates which swaps~$0$ and~$\infty$ so that~$\lightcone(\theta)$ grows from~$\infty$ towards~$0$ rather than from~$0$ towards~$\infty$.  By the absolute continuity properties of the GFF \cite[Proposition~3.2]{MS_IMAG}, we may assume without loss of generality that the boundary data for~$h$ is as described in the left side of Figure~\ref{fig::counterflowline_hit_conformal_maps}.  Let~$\eta'$ be the counterflow line of~$h$ from~$0$ to~$\infty$, $(g_t)$ its chordal Loewner evolution, $W$ its Loewner driving function, and let $f_t = g_t - W_t$ be its centered Loewner evolution.  It is explained in Figures~\ref{fig::flowline_hit_point}--\ref{fig::counterflowline_hit_conformal_maps} that in order for~$\lightcone(\theta)$ to get within distance~$\epsilon$ of a given point $z \in \h$, it must be that~$\eta'$ gets within distance~$\epsilon$ of~$z$ and the flow lines~$\eta^L$ and~$\eta^R$ with angles~$\tfrac{\theta}{2}$ and~$-\tfrac{\theta}{2}$, respectively, starting near the tip of~$\eta'$ do not intersect each other.  As explained in Figure~\ref{fig::counterflowline_hit_conformal_maps}, the exponent for this probability can be estimated by computing the moments of~$|f_{\tau_\epsilon}'(z)|$ and computing the exponent for the probability of the event that two GFF flow lines starting close to each other do not intersect before reaching a macroscopic distance from their starting points.

\begin{remark}
\label{rem::critical_angle}
Recall that the critical angle~$\theta_c$ from~\eqref{eqn::space_filling_angle} is the angle at or above which it is claimed in Theorem~\ref{thm::lightcone_dimension} that~$\dimH(\lightcone(\theta))$ is almost surely equal to~$2$.  The reason that this is the case is that when $\theta \geq \theta_c$, the left side of the flow line~$\eta^L$ cannot intersect the right side of the flow line~$\eta^R$ from Figures~\ref{fig::flowline_hit_point}--\ref{fig::counterflowline_hit_conformal_maps}.
\end{remark}

\subsection{Derivative estimate}
\label{subsec::derivative_estimate}

Fix~$\kappa > 0$ and suppose that~$\eta$ is an~$\SLE_\kappa$ process in~$\h$ from~$0$ to~$\infty$.  For each $r \in \R$, we let
\[ \nu = \nu(r) = \frac{r^2}{4} \kappa + r\left(1-\frac{\kappa}{4}\right) \quad\text{and}\quad \xi = \xi(r) = \frac{r^2}{8} \kappa.\]
Let $(g_t)$ be the Loewner evolution associated with~$\eta$, let~$W$ be its Loewner driving function, and let $f_t = g_t - W_t$ be its centered Loewner evolution.  For each $z \in \h$, we let
\[ Z_t = Z_t(z) = X_t + iY_t =f_t(z)\]
and
\begin{equation}
\label{eqn::mg_processes}
 \Delta_t = |g_t'(z)|,\quad \Upsilon_t = \frac{Y_t}{|g_t'(z)|},\quad \Theta_t = \arg Z_t, \quad\text{and}\quad S_t = \sin \Theta_t.
\end{equation}

\begin{proposition}
\label{prop::derivative_estimate}
For each~$\epsilon > 0$, let $\tau_\epsilon = \inf\{ t \geq 0 : |\eta(t) - z| \leq \epsilon\}$.  We have that
\begin{equation}
\label{eqn::derivative_estimate}
 \E \big[|g_{\tau_\epsilon}'(z)|^{\nu+r} \one_{\{\tau_\epsilon < \infty\}} \big] \asymp \epsilon^{-\xi-r}
\end{equation}
where the constants in~$\asymp$ depend only on~$\kappa$ and~$r$.  For each $R \geq 2|z|$, we also let~$\sigma_R = \inf\{t \geq 0: |\eta(t)| = R\}$.  Fix~$\delta \in (0,\tfrac{\pi}{2})$ and assume that $\arg(z) \in (\delta,\pi-\delta)$.  We also have that
\begin{equation}
\label{eqn::derivative_estimate_escape}
 \E \big[|g_{\tau_\epsilon}'(z)|^{\nu+r} \one_{\{\tau_\epsilon \leq \sigma_R < \infty\}} \big] \asymp \epsilon^{-\xi-r}
\end{equation}
where the constants in~$\asymp$ depend only on~$\kappa$,~$r$, and~$\delta$.
\end{proposition}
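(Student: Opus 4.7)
The plan is to use the Schramm--Wilson one-point martingale, together with optional stopping for the upper bound and a Girsanov change of measure for the lower bound, in the style of \cite{RS05,BEF_DIM}. For $r\in\R$, consider
\[
 M_t \;=\; |g_t'(z)|^{\nu+r}\,\Upsilon_t^{\xi+r}\,S_t^{-r}.
\]
Applying It\^o's formula to $\log M_t$ using $dZ_t=(2/Z_t)\,dt-\sqrt{\kappa}\,dB_t$ and the Loewner ODE $\partial_t\log g_t'(z)=-2/Z_t^{2}$, and collecting the resulting expressions for $\partial_t\log|g_t'(z)|$, $\partial_t\log Y_t$, $d\Theta_t$ and $d\langle\Theta\rangle_t$, one sees that vanishing of the drift of $dM_t/M_t$ reduces to two polynomial identities (matching the coefficients of $X_t^2$ and $Y_t^2$), which the exponents $(\nu+r,\xi+r,-r)$ are set up to satisfy. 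Hence $M_t$ is a nonnegative local martingale. By the Koebe distortion estimate (Lemma~\ref{lem::distortion}), $\Upsilon_t\asymp\dist(z,\partial\h_t)$, so on $\{\tau_\epsilon<\infty\}$ one has $\Upsilon_{\tau_\epsilon}\asymp\epsilon$ and
\[
 M_{\tau_\epsilon}\;\asymp\;\epsilon^{\xi+r}\,|g_{\tau_\epsilon}'(z)|^{\nu+r}\,S_{\tau_\epsilon}^{-r}.
\]

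For the upper bound in~\eqref{eqn::derivative_estimate}, apply optional stopping to the nonnegative supermartingale $M_{t\wedge\tau_\epsilon}$, so $\E[M_{\tau_\epsilon}\one_{\{\tau_\epsilon<\infty\}}]\le M_0\asymp 1$. When $r\ge 0$ the factor $S_{\tau_\epsilon}^{-r}\ge 1$ gives the desired bound immediately. When $r<0$ the factor $S^{-r}\le 1$ runs the wrong way; one corrects this by a Beurling argument (Theorem~\ref{thm::beurling}), which gives $\P[S_{\tau_\epsilon}\le\delta\giv\tau_\epsilon<\infty]\lesssim\delta^{1/2}$, and then decomposes dyadically in $\delta$, absorbing each slice via the martingale bound for a slightly different value of $r$.

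For the lower bound, define $\wt\P$ by $d\wt\P/d\P|_{\ev F_{t\wedge\tau_\epsilon}}=M_{t\wedge\tau_\epsilon}/M_0$. Girsanov's theorem identifies the law of $\eta$ under $\wt\P$ with that of a two-sided radial $\SLE_\kappa$ variant targeted at $z$, whose driving function satisfies an SDE of the form~\eqref{eqn::slesde} augmented by an interior force point at $z$. Under this targeted law, $\eta$ a.s.\ reaches every neighborhood of $z$, so $\wt\P[\tau_\epsilon<\infty]=1$, and scale invariance makes $S_{\tau_\epsilon}$ tight in $(0,\pi)$ under $\wt\P$. Rewriting $M_0=\E[M_{\tau_\epsilon}\one_{\{\tau_\epsilon<\infty\}}]$ then gives $\E[|g_{\tau_\epsilon}'(z)|^{\nu+r}\one_{\{\tau_\epsilon<\infty\}}]\gtrsim \epsilon^{-\xi-r}$.

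The second estimate~\eqref{eqn::derivative_estimate_escape} is obtained by running the same argument up to $\tau_\epsilon\wedge\sigma_R$: the upper bound is immediate, and for the lower bound the angle hypothesis $\arg z\in(\delta,\pi-\delta)$ combined with scaling forces $\Upsilon_t$ and $S_t$ to remain of macroscopic order until the exit, so that under $\wt\P$ the targeted curve reaches $B(z,\epsilon)$ before leaving $B(0,R)$ with probability bounded below uniformly in $\epsilon$ and $R$. I expect the main obstacle to be identifying the $\wt\P$-law as a bona fide targeted $\SLE_\kappa$ and proving the recurrence of $\eta$ toward $z$ under $\wt\P$: this hinges on matching the drift produced by $d\langle M,B\rangle_t/M_t$ to the drift of~\eqref{eqn::slesde} and then on a Bessel-type transience estimate for the radial coordinate of $Z_t$ under $\wt\P$.
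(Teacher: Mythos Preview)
The paper does not give its own proof of this proposition: it simply cites \cite[Section~6.3]{LV09} for~\eqref{eqn::derivative_estimate} and \cite[Lemma~4.2]{MW_INTERSECTIONS} for~\eqref{eqn::derivative_estimate_escape}, and then remarks that both follow from the local martingale $M_t = |Z_t|^r Y_t^\xi \Delta_t^\nu = S_t^{-r} \Upsilon_t^{\xi+r} \Delta_t^{\nu+r}$ of \cite[Proposition~6.1]{LV09}. Your proposal is precisely a sketch of the argument carried out in those references, so in that sense you have recovered the intended approach: the one-point martingale, optional stopping for the upper bound, and a Girsanov tilt (yielding a two-sided radial $\SLE_\kappa$ / $\SLE_\kappa(\kappa-8)$ targeted at $z$) for the lower bound.

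One technical point deserves care. Your handling of the case $r<0$ in the upper bound appeals to a Beurling-type estimate $\P[S_{\tau_\epsilon}\le\delta \mid \tau_\epsilon<\infty]\lesssim\delta^{1/2}$, but the Beurling estimate controls the probability that Brownian motion from $z$ avoids the curve, not directly the conditional distribution of $S_{\tau_\epsilon}$; smallness of $S_{\tau_\epsilon}$ corresponds to the curve nearly surrounding $z$, and the exponent governing this under the \emph{conditioned} law is $\kappa$-dependent rather than universally $1/2$. The route actually taken in \cite{LV09} is closer to what you hint at in the last clause: one compares the martingales for nearby exponents $r$ and uses that the map $r\mapsto(\nu(r),\xi(r))$ is locally invertible, so that a dyadic decomposition in $S_{\tau_\epsilon}$ can be absorbed by shifting $r$. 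Your final paragraph correctly identifies the remaining work for the lower bound (recurrence toward $z$ under $\wt\P$); this is exactly the content of the targeted-$\SLE$ analysis in \cite{LV09} and \cite[Lemma~4.2]{MW_INTERSECTIONS}, and your sketch is on the right track.
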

\begin{proof}
See \cite[Section~6.3]{LV09} for a proof of~\eqref{eqn::derivative_estimate} and \cite[Lemma~4.2]{MW_INTERSECTIONS} for a proof of~\eqref{eqn::derivative_estimate_escape}.
\end{proof}

This result can be derived from \cite[Proposition~6.1]{LV09}, which gives that
\[ M_t = |Z_t|^r Y_t^\xi \Delta_t^\nu = S_t^{-r} \Upsilon_t^{\xi+r} \Delta_t^{\nu+r}\]
is a local martingale.  This martingale also appears in \cite[Theorem~3 and Theorem~6]{SCHRAMM_WILSON} and is part of the same family of martingales that we will use in Section~\ref{subsec::martingale_conditioning} to get the exponent for the probability that two GFF flow lines do not intersect each other before making it to a macroscopic distance from their starting points.

\subsection{Non-intersection exponent}
\label{subsec::martingale_conditioning}

\begin{figure}[ht!]
\begin{center}
\includegraphics[scale=0.85]{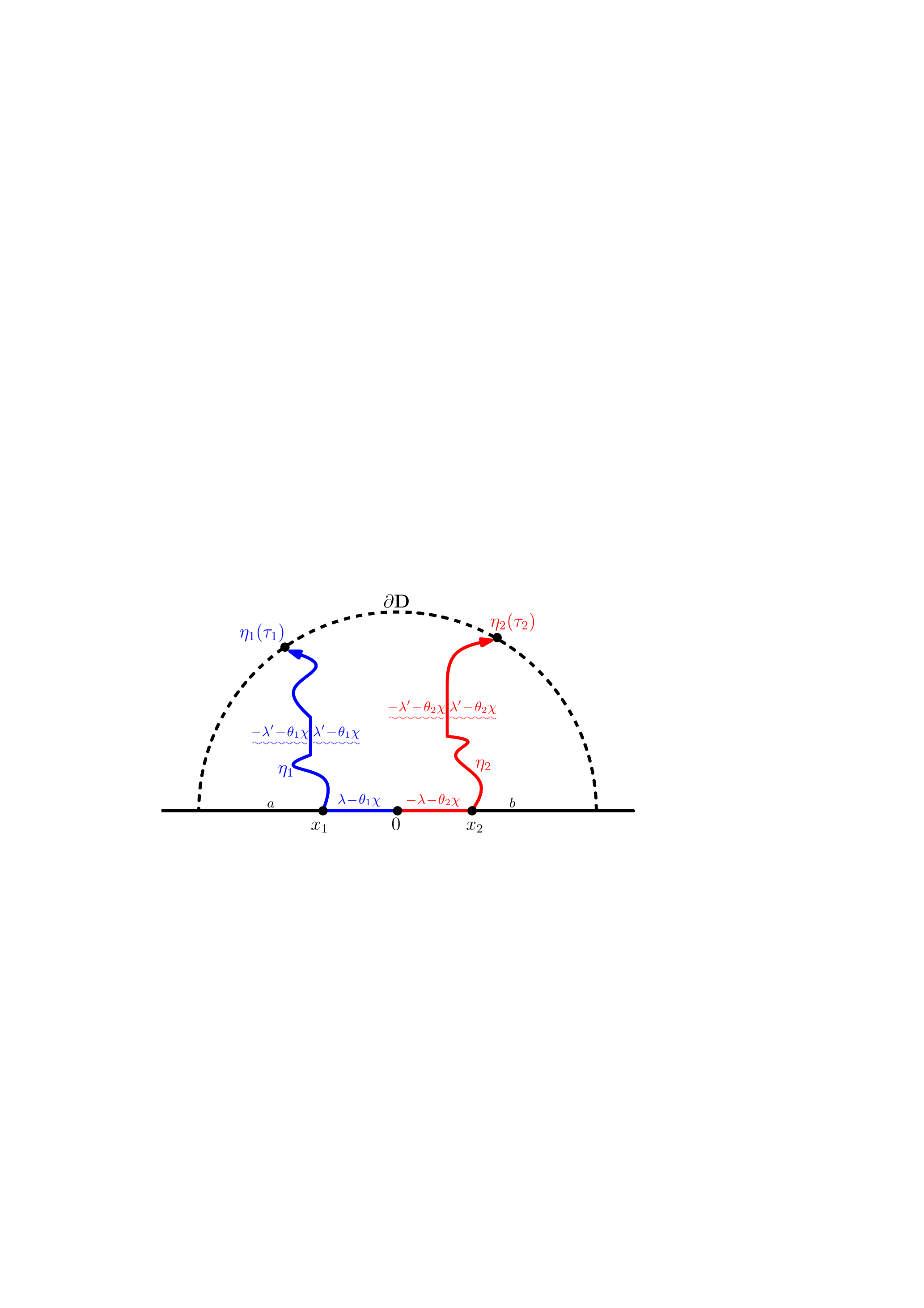}
\end{center}
\caption{\label{fig::non_intersection_exponent} Suppose that~$h$ is a GFF on~$\h$ with the boundary data depicted above.  For angles $\theta_1,\theta_2$, we let $\eta_i$ for $i=1,2$ be the flow line of $h$ starting at $x_i$ with angle $\theta_i$ and let $\tau_i$ (resp.\ $\wh{\tau}_i$) be the first time that $\eta_i$ hits $\partial \D$ (resp.\ $\partial (\tfrac{1}{2} \D)$).  Assume that $\theta_1,\theta_2$ are chosen so that $\eta_1$ can hit $\eta_2$, i.e. $\theta_1 \in (\theta_2-\pi,\theta_2+2\lambda'/\chi)$.  In Proposition~\ref{prop::non_intersection_exponent} we compute the exponent for the probability of the event that $\eta_1([0,\tau_1]) \cap \eta_2([0,\tau_2]) \neq \emptyset$ as $x_1,x_2 \to 0$.   The main step of the proof is Lemma~\ref{lem::non_intersection_exponent}, which gives the exponent in the special case that $\dist(\eta_1([\wh{\tau}_1,\tau_1]),\eta_2([\wh{\tau}_2,\tau_2]))$ is bounded from below by a positive constant and $\eta_i|_{[0,\tau_i]}$ for $i=1,2$ both do not get too close to $0$.
}
\end{figure}

\begin{figure}[ht!]
\begin{center}
\includegraphics[scale=0.85]{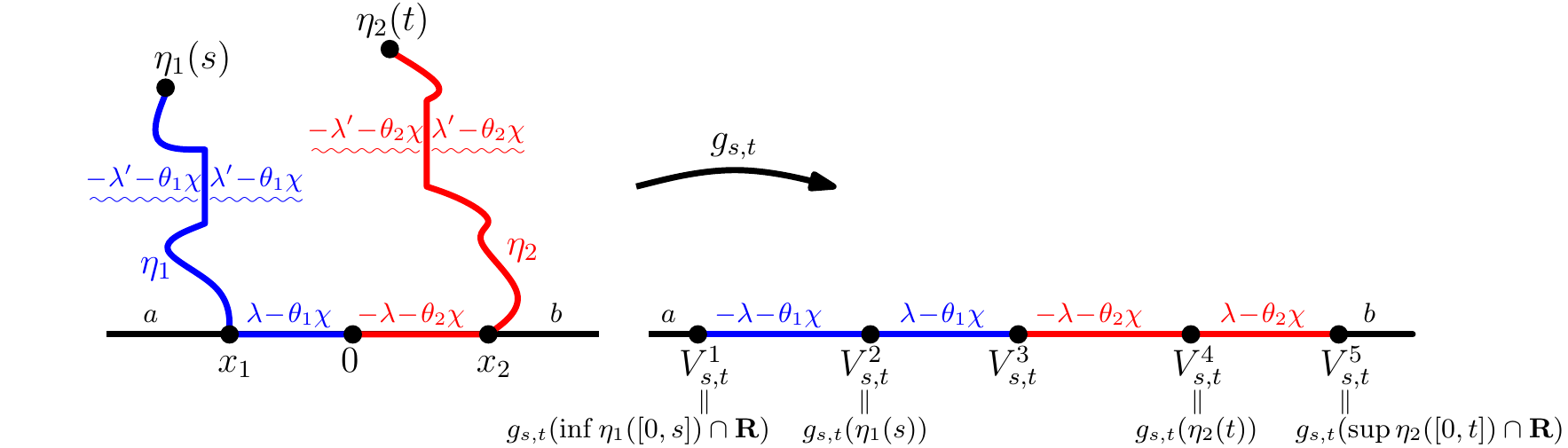}
\end{center}
\vspace{-0.025\textheight}
\caption{\label{fig::two_path_martingale}
(Continuation of Figure~\ref{fig::non_intersection_exponent}.)  For each $s,t \geq 0$, let $g_{s,t}$ be the unique conformal map which takes the unbounded connected component of $\h \setminus (\eta_1([0,s]) \cup \eta_2([0,t]))$ back to $\h$ satisfying $g_{s,t}(z) = z+o(1)$ as $z \to \infty$.  Let $V_{s,t}^1 = g_{s,t}(\inf \eta_1([0,s]) \cap \R)$, $V_{s,t}^2 = g_{s,t}(\eta_1(s))$, $V_{s,t}^3$ be the image of the most recent intersection of $\eta_1|_{[0,s]}$ and $\eta_2|_{[0,t]}$ if they intersect, or $\eta_1|_{[0,s]}$ and $[0,x_2]$ if they intersect, or $\eta_2|_{[0,t]}$ and $[x_1,0]$ if they intersect, and otherwise be equal to $g_{s,t}(0)$, $V_{s,t}^4 = g_{s,t}(\eta_2(t))$, and $V_{s,t}^5 = g_{s,t}(\sup \eta_2([0,t]) \cap \R)$.  Let $\rho_i$ be such that as one traces $\R$ from left to right, the heights in the right side jump by $\rho_i \lambda$ at the points $V_{s,t}^i$.  The product $M_{s,t} := \prod_{j \neq 3} |V_{s,t}^j - V_{s,t}^3|^{(\wt{\rho}_3 - \rho_3)\rho_j/(2\kappa)}$ evolves as a martingale in each of $s$ and $t$ separately where $\wt{\rho}_3=\kappa-4-\rho_3$ \cite{SCHRAMM_WILSON}.  Reweighting the law of the pair of paths $(\eta_1,\eta_2)$ by $M_{s,t}$ yields the law of a new pair of paths which almost surely do not intersect each other.  When one path is fixed the evolution of the other in the weighted law is the same as in the unweighted law except with $\rho_3$ replaced by $\wt{\rho}_3$.  As explained in more detail in the proof of Lemma~\ref{lem::non_intersection_exponent}, this new pair of paths can be constructed as flow lines of a GFF with modified boundary data and angles.
}
\end{figure}

In this section, we are going to derive the exponent for the probability that two flow lines of the GFF starting from $\pm \tfrac{1}{2} \epsilon$ do not intersect before hitting $\partial \D$ as $\epsilon \to 0$ (see Figure~\ref{fig::non_intersection_exponent} for an illustration of the setup).  The main result is:

\begin{proposition}
\label{prop::non_intersection_exponent}
Fix $\epsilon > 0$ and let $x_1 = -\tfrac{1}{2}\epsilon$ and $x_2 = \tfrac{1}{2}\epsilon$.  Let $\theta_1,\theta_2$ be angles with $\theta_1 \in (\theta_2-\pi,\theta_2+2\lambda'/\chi)$.  Suppose that $h$ is a GFF on $\h$ with the boundary data illustrated in Figure~\ref{fig::non_intersection_exponent} where $a,b \in \R$ are constants so that $\eta_1$, $\eta_2$ do not hit the continuation threshold immediately almost surely.  For $i=1,2$, let $\tau_i$ be the first time that $\eta_i$ hits $\partial \D$ and let $F = \{\eta_1([0,\tau_1]) \cap \eta_2([0,\tau_2]) = \emptyset\}$.
Let
\begin{equation}
\label{eqn::non_intersection_exponent}
\alpha = \frac{1}{2\kappa}\left(\kappa-4-2\rho \right)\left(\frac{b-a}{\lambda} - \rho \right) \quad\text{and}\quad \rho = \frac{(\theta_1-\theta_2) \chi}{\lambda} -2.
\end{equation}
Then we have that
\[ \p[F] = \epsilon^{\alpha+o(1)}\]
where the $o(1)$ term tends to zero as $\epsilon \to 0$ at a rate depending only on $\theta_1$, $\theta_2$, $\kappa$, and $a,b$.
\end{proposition}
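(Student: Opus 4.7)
My plan is to apply the Schramm--Wilson martingale reweighting technique outlined in Figure~\ref{fig::two_path_martingale}. The central object is the local martingale
\[
M_{s,t} \;=\; \prod_{j \neq 3} \bigl|V^j_{s,t} - V^3_{s,t}\bigr|^{(\wt\rho_3 - \rho_3)\rho_j/(2\kappa)}, \qquad \wt\rho_3 = \kappa - 4 - \rho_3,
\]
where $\rho_j$ is the boundary-data jump of (the image of) $h$ at the marked point $V^j_{s,t}$, obtained after conformally mapping the unbounded connected component of $\h \setminus (\eta_1([0,s]) \cup \eta_2([0,t]))$ back to $\h$ via $g_{s,t}$. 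A direct computation from the boundary values in Figure~\ref{fig::non_intersection_exponent} identifies $\rho_3$ with $\rho$ as in~\eqref{eqn::non_intersection_exponent}, and telescoping the boundary-data jumps from $-\infty$ to $+\infty$ gives $\sum_{j=1}^5 \rho_j = (b-a)/\lambda$, so that $\rho_1+\rho_2+\rho_4+\rho_5 = (b-a)/\lambda - \rho$.

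First I would evaluate $M_{0,0}$. At $s=t=0$ the marked points collapse to $V^1=V^2=-\epsilon/2$, $V^3=0$, $V^4=V^5=\epsilon/2$, so $|V^j - V^3| = \epsilon/2$ for each $j \neq 3$, which yields
\[
M_{0,0} \;=\; (\epsilon/2)^{(\wt\rho_3 - \rho_3)(\rho_1+\rho_2+\rho_4+\rho_5)/(2\kappa)} \;=\; (\epsilon/2)^\alpha.
\]
Next I would interpret the reweighting: sampling $\eta_1$ first up to $\tau_1$ and then $\eta_2$ given $\eta_1$ up to $\tau_2$, and weighting the joint law by $M_{\tau_1,\tau_2}/M_{0,0}$, produces a new probability measure $\wt{\p}$ under which each path is a flow line of a modified GFF where the weight $\rho_3$ at the mutual interaction point has been replaced by its dual $\wt\rho_3$. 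In the parameter range $\theta_1 - \theta_2 \in (-\pi, 2\lambda'/\chi)$ one verifies $\wt\rho_3 \geq \tfrac{\kappa}{2} - 2$, so under $\wt{\p}$ the pair of paths is boundary-avoiding with respect to one another: they almost surely do not intersect and each exits at $\partial\D$.

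I would then invert the change of measure as
\[
\p[F] \;=\; M_{0,0} \cdot \wt{\E}\!\left[M_{\tau_1,\tau_2}^{-1}\,\one_F\right].
\]
On the event---typical under $\wt{\p}$---that after first exiting $\tfrac{1}{2}\D$ the two paths stay well-separated from each other and from $0$, the five points $V^j_{\tau_1,\tau_2}$ have pairwise distances of order one, so $M_{\tau_1,\tau_2}^{-1}$ is bounded above and below by constants depending only on $\kappa$, $a$, $b$, $\theta_1$, $\theta_2$. This produces matching upper and lower bounds of the form $\epsilon^{\alpha + o(1)}$ on that event, and is essentially the content of the forthcoming Lemma~\ref{lem::non_intersection_exponent}.

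The main obstacle is bounding the contribution from ``bad'' configurations where the reweighted paths approach each other or approach $0$ before exiting $\tfrac{1}{2}\D$ and where $M_{\tau_1,\tau_2}^{-1}$ can blow up. I would handle this via a multi-scale separation argument (in the spirit of Lawler--Schramm--Werner's treatment of Brownian intersection exponents), combined with the conformal distortion estimates of Section~\ref{subsec::conformal}, the Beurling estimate (Theorem~\ref{thm::beurling}), and the local Radon--Nikodym comparison of Lemma~\ref{lem::rn} to pass between the various boundary-data configurations that arise as the paths evolve. It is this error control, rather than the algebraic computation of $M_{0,0}$, that produces the $\epsilon^{o(1)}$ correction in the stated asymptotic.
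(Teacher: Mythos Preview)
Your plan is correct and rests on the same engine as the paper: the Schramm--Wilson martingale $M_{s,t}$ of Figure~\ref{fig::two_path_martingale}, the identification $M_{0,0}\asymp\epsilon^\alpha$, and a multi-scale argument to absorb the contribution of degenerate configurations into an $\epsilon^{o(1)}$ correction.

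Where you diverge from the paper is in the packaging of the multi-scale step.  You invert the change of measure and propose to bound $\wt{\E}\bigl[M_{\tau_1,\tau_2}^{-1}\bigr]$ under the tilted law $\wt{\p}$, so that the ``bad'' contribution becomes a singular moment of $M_{\tau_1,\tau_2}^{-1}$ over near-degenerate configurations, to be controlled by LSW-style separation.  The paper stays under $\p$ throughout: it first proves the sharp bound $\p[E_\delta]\asymp\epsilon^\alpha$ of Lemma~\ref{lem::non_intersection_exponent} by watching the one-parameter local martingale $\wt M_t=M_{t\wedge\tau_1,\,t\wedge\tau_2}$ hit fixed levels $u_1,u_2$ and invoking optional stopping, and then deduces Proposition~\ref{prop::non_intersection_exponent} by iterating that lemma over the geometric annuli $\epsilon^\beta e^j\D$, $1\le j\le\lfloor\beta\log\tfrac1\epsilon\rfloor$.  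The upper bound on $\p[F]$ is a plain union bound---either some scale is ``nice'' (bounded by Lemma~\ref{lem::non_intersection_exponent} via Lemma~\ref{lem::non_intersection_exponent_general}) or every scale is ``bad'', and the conditional probability of staying bad at the next scale is at most $p(\delta)$, so the all-bad event costs $p(\delta)^{n_\beta}\le\epsilon^{(1-\beta)\alpha}$ for $\delta$ small.  The lower bound is immediate from $\p[F]\ge\p[E_\delta]$.

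The paper's route buys two simplifications.  First, by working with level-hitting times of $\wt M_t$ it never needs to check that $M$ is a genuine (as opposed to local) martingale up to $(\tau_1,\tau_2)$, which your identity $\p[F]=M_{0,0}\,\wt{\E}[M_{\tau_1,\tau_2}^{-1}\one_F]$ implicitly assumes.  Second, by converting the error term into a \emph{probability} that all scales are bad, it replaces your moment bound on $M^{-1}$ by a Markovian iteration of a single conditional probability, which is shorter to justify.  Your approach would also go through, but these two points are where the extra care would have to be spent.
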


We will not use Proposition~\ref{prop::non_intersection_exponent} as stated in the proof of Proposition~\ref{prop::dimension_upper_bound} and have included it just for completeness.  The main ingredient in its proof is Lemma~\ref{lem::non_intersection_exponent} which gives the corresponding estimate in the special case that the segments of the paths between first hitting $\partial (\tfrac{1}{2} \D)$ and $\partial \D$ have positive distance from each other and neither path gets too close to $0$ before exiting $\partial \D$.  From Lemma~\ref{lem::non_intersection_exponent}, we will prove Lemma~\ref{lem::non_intersection_exponent_general} which is a version which holds with more general boundary data and is the estimate that we will actually make use of in this article.

\begin{lemma}
\label{lem::non_intersection_exponent}
Suppose that we have the same setup as in Proposition~\ref{prop::non_intersection_exponent}.  Let $\wh{\tau}_i$ be the first time that $\eta_i$ hits $\partial (\tfrac{1}{2} \D)$ for $i=1,2$.  For each $\delta \in (0,\tfrac{1}{2})$, let $E_\delta$ be the event that
\begin{enumerate}[(i)]
\item\label{it::non_intersection} $\eta_1([0,\tau_1]) \cap \eta_2([0,\tau_2]) = \emptyset$,
\item\label{it::good_exit_distance} $\dist(\eta_1([\wh{\tau}_1,\tau_1]), \eta_2([\wh{\tau}_2,\tau_2])) \geq \delta$, and
\item\label{it::no_small_ball} $\eta_i([0,\tau_i]) \cap (\delta \epsilon \D) = \emptyset$ for $i=1,2$.
\end{enumerate}
Then
\begin{equation}
\label{eqn::non_intersection_probability}
 \p[E_\delta] \asymp \epsilon^\alpha
\end{equation}
where $\alpha$ is as in~\eqref{eqn::non_intersection_exponent} and the constants in $\asymp$ depend only on $\delta$, $\theta_1$, $\theta_2$, $\kappa$, and $a,b$.
\end{lemma}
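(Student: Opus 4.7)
The plan is to use the bi-martingale $M_{s,t}$ from Figure~\ref{fig::two_path_martingale} as a change-of-measure tool, following the strategy of \cite{SCHRAMM_WILSON}. Reweighting $\p$ by $M_{\tau_1,\tau_2}/M_{0,0}$ produces a probability $\wt{\p}$ under which the pair $(\eta_1,\eta_2)$ is distributed as a pair of flow lines of a GFF whose boundary data differs from that of $h$ only by the substitution $\rho_3 \mapsto \wt{\rho}_3 = \kappa - 4 - \rho_3$. Since this substitution shifts the effective angle difference across the critical value $2\lambda'/\chi$, the classification of flow-line interactions \cite[Theorem~1.5]{MS_IMAG} implies that $\eta_1$ and $\eta_2$ almost surely do not intersect under $\wt{\p}$; in particular the non-intersection condition in $E_\delta$ holds $\wt{\p}$-a.s.

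The change-of-measure identity gives
\[
  \p[E_\delta] \;=\; M_{0,0}\,\wt{\E}\!\left[\frac{\one_{E_\delta}}{M_{\tau_1,\tau_2}}\right].
\]
At time $(0,0)$ we have $V^1_{0,0}=V^2_{0,0}=x_1$, $V^3_{0,0}=0$, $V^4_{0,0}=V^5_{0,0}=x_2$, so $|V^j_{0,0}-V^3_{0,0}|=\epsilon/2$ for every $j\in\{1,2,4,5\}$. Reading off the jumps of the boundary values of $h$ at the five marked points yields $\rho_3 = \rho$ and $\rho_1+\rho_2+\rho_4+\rho_5 = (b-a)/\lambda - \rho$, because the total height difference between the far left and far right boundaries of $\h$ equals $(b-a)$ while the contribution at $V^3$ is $\rho\lambda$. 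Substituting $\wt{\rho}_3-\rho_3 = \kappa-4-2\rho$ into the formula for $M$ gives
\[
  M_{0,0} \;\asymp\; \epsilon^{(\wt{\rho}_3-\rho_3)(\rho_1+\rho_2+\rho_4+\rho_5)/(2\kappa)} \;=\; \epsilon^{\alpha}
\]
with $\alpha$ as in~\eqref{eqn::non_intersection_exponent}. On $E_\delta$, conditions~(ii) and~(iii) together with the conformal distortion bounds of Lemma~\ref{lem::distortion} and Lemma~\ref{lem::ball_size} imply that each of the five images $V^j_{\tau_1,\tau_2}$ lies in a fixed annulus around $0$ and that their pairwise distances are bounded below in terms of $\delta$; hence $M_{\tau_1,\tau_2}\asymp 1$ on $E_\delta$.

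It then remains to prove $\wt{\p}[E_\delta]\asymp 1$ uniformly in $\epsilon$. The non-intersection of $\eta_1$ and $\eta_2$ being automatic under $\wt{\p}$, the issue is to show that the two paths separate to macroscopic distance from each other and from $0$ before exiting $\D$ with probability bounded below. By Brownian scaling the $\wt{\p}$-law of the pair equals that of two non-intersecting flow lines started at $\pm\tfrac{1}{2}$ run until hitting $\partial(\epsilon^{-1}\D)$, which for small $\epsilon$ is essentially a half-plane configuration. A separation-type argument, combining the Markov property of flow lines with Lemma~\ref{lem::rn} (to compare to a fixed reference configuration with the same local boundary data near the origin) and the Beurling estimate (Theorem~\ref{thm::beurling}) (to preclude either path from returning near $0$), then yields the required uniform lower bound. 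This last step is the main technical obstacle, since one must rule out the a priori possibilities that the non-intersecting paths either hug one another all the way out to $\partial\D$ or that one of them returns to a neighbourhood of $0$; everything else follows directly from the martingale formalism of \cite{SCHRAMM_WILSON} and the conformal distortion bounds of Section~\ref{subsec::conformal}.
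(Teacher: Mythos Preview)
Your strategy coincides with the paper's: use the Schramm--Wilson two-parameter local martingale $M_{s,t}$, note $M_{0,0}\asymp\epsilon^\alpha$ and $M_{\tau_1,\tau_2}\asymp 1$ on $E_\delta$, and show that under the reweighted law (with $\rho_3$ replaced by $\wt{\rho}_3=\kappa-4-\rho_3$) the event $E_\delta$ has uniformly positive probability. Two steps in your write-up need more care.

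First, the identity $\p[E_\delta]=M_{0,0}\,\wt{\E}[\one_{E_\delta}/M_{\tau_1,\tau_2}]$ is not automatic: $M_{s,t}$ is only a \emph{local} martingale in each parameter, and under $\p$ it can hit $0$ (exactly when the paths collide), so the Girsanov-type change of measure is delicate to set up directly. The paper avoids this by passing to the one-parameter process $\wt{M}_t:=M_{t\wedge\tau_1,\,t\wedge\tau_2}$ and arguing via level hitting times $T_u=\inf\{t:\wt{M}_t=u\}$: since $M_{\tau_1,\tau_2}\asymp 1$ on $E_\delta$ one has $E_\delta\subseteq\{T_{u_1}<T_0\}$ for suitable $u_1>0$, and optional stopping gives $\p[T_{u_1}<T_0]=u_1^{-1}\E[\wt{M}_{T_{u_1}\wedge T_0}]\asymp\epsilon^\alpha$. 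The lower bound is obtained the same way, with the uniform lower bound on $\p[E_\delta\mid T_{u_2}<T_0]$ supplied by \cite[Section~2]{MW_INTERSECTIONS} (your separation sketch is in the same spirit).

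Second, the bound $M_{\tau_1,\tau_2}\asymp 1$ on $E_\delta$ does not follow from Lemma~\ref{lem::distortion} or Lemma~\ref{lem::ball_size}: those are Koebe-type estimates for \emph{interior} points, whereas the $V^j_{\tau_1,\tau_2}$ are images of \emph{boundary} arcs under $g_{\tau_1,\tau_2}$. The correct tool is harmonic measure, and the paper isolates this as Lemma~\ref{lem::force_points_separated}, bounding $V^{j+1}_{\tau_1,\tau_2}-V^j_{\tau_1,\tau_2}$ above and $|V^j_{\tau_1,\tau_2}-V^3_{\tau_1,\tau_2}|$ below by Brownian exit probabilities. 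Only condition~(ii) is used for the lower bound there; condition~(iii) is not needed for this step (it enters later, in the absolute continuity argument of Lemma~\ref{lem::non_intersection_exponent_general}).
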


Before we prove Lemma~\ref{lem::non_intersection_exponent}, we need to collect the following lemma.

\begin{lemma}
\label{lem::force_points_separated}
Suppose that we have the same setup as in Lemma~\ref{lem::non_intersection_exponent} and let $V_{s,t}^j$ for $s,t \geq 0$ and $j \in \{1,\ldots,5\}$ be as in Figure~\ref{fig::two_path_martingale}.  There exists a universal constant $A > 0$ such that
\begin{align}
\label{eqn::vst_ubd}
V_{\tau_1,\tau_2}^{j+1} - V_{\tau_1,\tau_2}^j &\leq A \quad\text{for}\quad j \in \{1,\ldots, 4\}.
\intertext{Moreover, there exists a constant $B_\delta > 0$ depending only on $\delta \in (0,\tfrac{1}{2})$ such that}
\label{eqn::vst_lbd}
\big| V_{\tau_1,\tau_2}^j - V_{\tau_1,\tau_2}^3 \big| &\geq B_\delta \quad\text{for}\quad j \in \{1,2,4,5\} \quad\text{on}\quad E_\delta.
\end{align}
\end{lemma}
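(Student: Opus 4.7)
The plan is to treat the two bounds separately. For the upper bound \eqref{eqn::vst_ubd}, it suffices to show that $V^5_{\tau_1,\tau_2} - V^1_{\tau_1,\tau_2} \leq 2A$ for some universal $A > 0$, since the $V^j_{\tau_1,\tau_2}$ are increasing in $j$. Because $\eta_i([0,\tau_i]) \subseteq \overline{\D}$ for $i=1,2$, the $\h$-hull $K := \eta_1([0,\tau_1]) \cup \eta_2([0,\tau_2])$ has diameter at most $2$ and hence half-plane capacity bounded by a universal constant. Standard estimates for normalized conformal maps of compact $\h$-hulls (see, e.g., \cite[Chapter~3]{LAW05}) then give that $g_{\tau_1,\tau_2}(\partial K \cap \overline{\h})$ is contained in an interval of length at most $2A$, which contains all five $V^j_{\tau_1,\tau_2}$ and yields \eqref{eqn::vst_ubd}.

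For the lower bound \eqref{eqn::vst_lbd}, the strategy is conformal invariance of Brownian motion. Fix $j \in \{1,2,4,5\}$ and let $\alpha_j$ denote the prime-end boundary arc of the unbounded component of $\h \setminus K$ whose image under $g_{\tau_1,\tau_2}$ is the real interval $[V^{j \wedge 3}_{\tau_1,\tau_2}, V^{j \vee 3}_{\tau_1,\tau_2}]$. Take the reference point $p_0 = 2i$, which is at distance at least $1$ from $K \subseteq \overline{\D}$. The plan is to show that on $E_\delta$ the harmonic measure of $\alpha_j$ from $p_0$ in the unbounded component is bounded below by some $c_\delta > 0$ depending only on $\delta$. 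By conformal invariance this equals the harmonic measure of the corresponding real interval from $g_{\tau_1,\tau_2}(p_0)$ in $\h$; Lemma~\ref{lem::ball_size} together with the hydrodynamic normalization of $g_{\tau_1,\tau_2}$ at $\infty$ bounds $|g_{\tau_1,\tau_2}(p_0) - p_0|$ universally, and since the relevant real interval lies in one of length at most $2A$ by the upper bound, the harmonic measure is at most a universal constant times its length. This forces $|V^j_{\tau_1,\tau_2} - V^3_{\tau_1,\tau_2}| \geq B_\delta$ for a suitable constant $B_\delta > 0$ depending only on $\delta$.

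To establish the harmonic-measure lower bound, I will exhibit a half-disk $B_j \subseteq \overline{\h}$ of radius at least $c_\delta$ such that $B_j \cap \partial K \subseteq \alpha_j$ and $B_j \cap \partial K$ contains a connected sub-arc of diameter at least $c_\delta/2$. For $j = 2$, take $B_2$ to be the half-disk around $\eta_1(\tau_1) \in \partial\D$ of radius $\delta/4$ in $\overline{\h}$; on $E_\delta$, condition \eqref{it::good_exit_distance} together with the fact that $\eta_i([0,\wh{\tau}_i]) \subseteq \overline{\tfrac12 \D}$ for $i=1,2$ force $\dist(\eta_1(\tau_1), K \setminus \eta_1([\wh\tau_1,\tau_1])) \geq \min(\delta,\tfrac12)$, so $B_2 \cap K$ is a connected sub-arc of the right side of $\eta_1$, which lies on $\alpha_2$ (whose image is $[V^3_{\tau_1,\tau_2}, V^2_{\tau_1,\tau_2}]$). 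A standard argument using the Beurling estimate (Theorem~\ref{thm::beurling}) then shows that Brownian motion from $p_0$ enters $B_2$ before exiting the unbounded component with probability bounded below in terms of $\delta$, and conditional on this, the Beurling estimate applied inside $B_2$ gives that it hits $\alpha_2$ before leaving $B_2$ with probability bounded below as well. The case $j = 4$ is symmetric, using $\eta_2(\tau_2)$ in place of $\eta_1(\tau_1)$. For $j \in \{1,5\}$, the arc $\alpha_1$ (resp.\ $\alpha_5$) contains $\alpha_2$ (resp.\ $\alpha_4$), so the same disk and argument suffice.

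The main obstacle is the bookkeeping needed to identify $\alpha_j$ correctly across all path configurations on $E_\delta$, particularly in scenarios where one of the paths bounces off $\R$ close to $0$ (but outside $\delta \epsilon \D$, by condition \eqref{it::no_small_ball}) or loops back around the origin before exiting $\partial\D$. Once the correct ``thick'' sub-arc of $\alpha_j$ has been pinned down, the remaining Brownian-motion estimate is essentially standard.
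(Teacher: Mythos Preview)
Your upper bound argument is correct and essentially equivalent to the paper's: the paper uses the identity $V_{s,t}^{j+1} - V_{s,t}^j = \lim_{y\to\infty} \pi y\,\p^{iy}[B \text{ exits } \h_{s,t} \text{ in } A_{s,t}^{j+1}]$ directly, whereas you invoke the standard diameter/half-plane-capacity bound. Both routes work.

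For the lower bound, your overall strategy (harmonic measure from a fixed reference point, then transfer via conformal invariance) is sound, and the paper does something similar but from infinity, using the $\partial\D$--arc between the two tips $\eta_1(\tau_1)$ and $\eta_2(\tau_2)$ rather than a ball around a single tip. However, your argument has a genuine gap in the cases $j=2$ and $j=4$. You claim that ``$B_2 \cap K$ is a connected sub-arc of the right side of $\eta_1$, which lies on $\alpha_2$.'' This is not true: the ball $B_2$ of radius $\delta/4$ centred at the tip $\eta_1(\tau_1)$ meets $\eta_1$ on \emph{both} its prime-end sides. The left side of $\eta_1$ near the tip maps into $[V^1,V^2]$, not into $[V^2,V^3]$, so $B_2 \cap \partial K \not\subseteq \alpha_2$. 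Consequently the Beurling step only tells you that Brownian motion from $p_0$ hits $\eta_1$ near the tip with $\delta$-positive probability; it does not tell you on which side, and hence does not bound the harmonic measure of $\alpha_2$ from below. (Your argument is fine for $j\in\{1,5\}$, since there $\alpha_j$ contains both sides of the relevant curve near its tip.)

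The fix is not hard but is missing from your write-up: you must argue that Brownian motion started from $p_0=2i$ can be made to enter $B_2$ \emph{from the region between $\eta_1$ and $\eta_2$} with $\delta$-positive probability (e.g., by first reaching the arc of $\partial\D$ strictly between the two tips while staying in $\h\setminus\overline{\D}$, which keeps it disjoint from $K$), so that upon hitting $\eta_1$ in $B_2$ it necessarily lands on the right side. This is exactly what the paper's choice of the arc $\{e^{i\phi}:\phi\in[\psi_2,\psi_1]\}\subseteq\partial\D$, with $\psi_i$ shifted by $\delta/4$ off the tips, accomplishes in one stroke.
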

\begin{proof}
For each $z \in \C$, let $\p^z$ denote the law of a standard planar Brownian motion $B$ starting from $z$ which is independent of $\eta_1$ and $\eta_2$.  For each $s,t \geq 0$, let $\h_{s,t}$ be the unbounded connected component of $\h \setminus (\eta_1([0,s]) \cup \eta_2([0,t]))$ and, for each $1 \leq j \leq 4$, let $A_{s,t}^{j+1}$ be the segment of $\partial \h_{s,t}$ which connects $g_{s,t}^{-1}(V_{s,t}^j)$ to $g_{s,t}^{-1}(V_{s,t}^{j+1})$ in the clockwise direction.  By \cite[Remark~3.50]{LAW05}, we have that
\[ V_{s,t}^{j+1} - V_{s,t}^j = \lim_{y \to \infty} \pi y \p^{iy}[ B \text{ exits } \h_{s,t} \text{ in } A_{s,t}^{j+1}].\]
Since $A_{s,t}^{j+1} \subseteq \ol{\D}$, this, in turn, implies for $0 \leq s \leq \tau_1$ and $0 \leq t \leq \tau_2$ that
\[ V_{s,t}^{j+1} - V_{s,t}^j \leq \lim_{y \to \infty} \pi y \p^{iy}[ B \text{ exits } \h \setminus \D \text{ in } \partial \D] < \infty.\]
This proves~\eqref{eqn::vst_ubd}.

Let $\phi_i = \arg(\eta_i(\tau_i))$ for $i=1,2$ and let $\psi_1 = \phi_1 - \tfrac{\delta}{4}$ and $\psi_2 = \phi_2 + \tfrac{\delta}{4}$.  We note that the probability that a Brownian motion starting from $iy$ exits $\h \setminus \D$ in $\partial \D$ with argument contained in $[\psi_2,\psi_1]$ is at least a $\delta$-dependent constant times $1/y$.  From this and Condition~\eqref{it::good_exit_distance} in the definition of $E_\delta$, \eqref{eqn::vst_lbd} follows.
\end{proof}

\begin{proof}[Proof of Lemma~\ref{lem::non_intersection_exponent}]
Let $\rho_1,\ldots,\rho_5$ be such that the jumps in the heights from left to right in the right side of Figure~\ref{fig::two_path_martingale} are equal to $\rho_i \lambda$.  Explicitly, the values of the $\rho_i$ are given by
\begin{align*}
 \rho_1 &= \frac{-\theta_1 \chi-a}{\lambda} -1,\quad \rho_2 = 2,\quad \rho_3 = \frac{(\theta_1-\theta_2)\chi}{\lambda}-2,\\
 &\quad\quad\quad \rho_4 = 2 \quad\text{and}\quad \rho_5 = \frac{b+\theta_2\chi}{\lambda} - 1.
\end{align*}
Let 
\[ \wt{\rho}_3 = \kappa-4-\rho_3 = \kappa-2 + \frac{(\theta_2-\theta_1)\chi}{\lambda}\]
be the reflection of $\rho_3$ about the value $\tfrac{\kappa}{2}-2$.  By \cite[Theorem~6]{SCHRAMM_WILSON}, reweighting the law of $(\eta_1,\eta_2)$ by the local martingale
\begin{equation}
\label{eqn::martingale_condition_paths_not_to_hit}
 M_{s,t} = \prod_{j \neq 3} |V_{s,t}^j - V_{s,t}^3|^{(\wt{\rho}_3 - \rho_3)\rho_j/(2\kappa)}
\end{equation}
corresponds to changing $\rho_3$ to $\wt{\rho}_3$.  This yields a pair of paths $(\wh{\eta}_1,\wh{\eta}_2)$ which are flow lines of the GFF as shown in the left side of Figure~\ref{fig::two_path_martingale} where the values $\theta_1$ and $\theta_2$ (both as angles and as indicated in the boundary conditions) are replaced by
\[ \wh{\theta}_1 = \theta_1 \quad\text{and}\quad \wh{\theta}_2= 2\theta_1 - \theta_2 - \frac{4\lambda'}{\chi},\]
respectively, and $b$ is replaced by $\wh{b} = b+\lambda(\wt{\rho}_3-\rho_3)$.  In particular, the angle gap between $(\wh{\eta}_1,\wh{\eta}_2)$ is given by
\[ \wh{\theta}_1 - \wh{\theta}_2 = \theta_2 - \theta_1 + \frac{4\lambda'}{\chi} > \frac{2\lambda'}{\chi}\]
since we assumed that $\theta_2-\theta_1 > -2\lambda'/\chi$.  Thus, $(\wh{\eta}_1,\wh{\eta}_2)$ almost surely do not intersect each other \cite[Theorem~1.5]{MS_IMAG}.  Observe that $\alpha$ is equal to the sum of the exponents in the definition of $M_{s,t}$ from~\eqref{eqn::martingale_condition_paths_not_to_hit}:
\[ \alpha = \left(\frac{\wt{\rho}_3-\rho_3}{2\kappa} \right) \sum_{j \neq 3} \rho_j.\]

Let
\[ \wt{M}_t = M_{t \wedge \tau_1, t \wedge \tau_2}\]
Since $M_{s,t}$ is a local martingale, it follows that $\wt{M}_t$ is also a local martingale.  Lemma~\ref{lem::force_points_separated} implies that
\begin{equation}
\label{eqn::mtau1tau2_eps}
 M_{\tau_1,\tau_2} \asymp 1 \quad\text{on}\quad E_\delta
\end{equation}
where the constants in $\asymp$ depend only on $\delta$, $\kappa$, $\theta_1$, $\theta_2$, and $a,b$.  For each $u \geq 0$, let $T_u = \inf\{t \geq 0 : \wt{M}_t = u\}$.  It follows from~\eqref{eqn::mtau1tau2_eps} that there exists a constant $u_1 > 0$ depending on $\delta$, $\kappa$, $\theta_1$, $\theta_2$, and $a,b$ such that $E_\delta \subseteq \{T_{u_1} < T_0\}$.  Consequently,
\begin{align*}
    \p[E_\delta] \leq \p[ T_{u_1} < T_0] = \frac{1}{u_1} \E[ \wt{M}_{T_{u_1} \wedge T_0}] \asymp \epsilon^\alpha
\end{align*}
where the constants in $\asymp$ depend on $\delta$, $\theta_1$,  $\theta_2$, $\kappa$, and $a,b$.  This proves the upper bound in~\eqref{eqn::non_intersection_probability}.

We will now give the lower bound for $\p[E_\delta]$.  Lemma~\ref{lem::force_points_separated} implies that there exists a constant $u_2 > 0$ depending only on $\delta$, $\theta_1$,  $\theta_2$, $\kappa$, and $a,b$ such that, on $E_\delta$, we have that $T_{u_2} \leq \tau_1 \wedge \tau_2 \wedge T_0$.  We have,
\begin{align*}
      \p[E_\delta]
\geq  \p[E_\delta \giv T_{u_2} < T_0] \p[ T_{u_2} < T_0]. 
\end{align*}
It is easy to see that $\p[E_\delta \giv T_{u_2} < T_0]$ is bounded from below by universal positive constant depending only on $\delta$, $\kappa$, $\theta_1$, $\theta_2$, and $a,b$ using the results of \cite[Section~2]{MW_INTERSECTIONS}.  This gives the lower bound since, arguing as in the proof of the upper bound, we know that $\p[T_{u_2} < T_0] \asymp \epsilon^\alpha$ where the constants in $\asymp$ depend only on $\delta$, $\theta_1$, $\theta_2$, $\kappa$, and $a,b$.  This proves the desired result for $E_\delta$.
\end{proof}

In Lemma~\ref{lem::non_intersection_exponent}, we computed the exponent for the probability that two GFF flow lines starting from $\pm \tfrac{1}{2}\epsilon$ hit $\partial \D$ before intersecting each other or hitting $\partial (\delta \epsilon \D)$ as $\epsilon \to 0$ when the field has the boundary data illustrated in Figure~\ref{fig::non_intersection_exponent}.  We are now going to deduce from this and the Radon-Nikodym derivative estimate Lemma~\ref{lem::rn} that the same is true if we consider a field which has the same boundary data as illustrated in Figure~\ref{fig::non_intersection_exponent} outside of the interval $(-\tfrac{1}{2} \delta \epsilon, \tfrac{1}{2} \delta \epsilon)$ and has general, piecewise constant boundary data in $(-\tfrac{1}{2} \delta \epsilon, \tfrac{1}{2} \delta \epsilon)$.

\begin{lemma}
\label{lem::non_intersection_exponent_general}
Suppose that we have the same setup as Lemma~\ref{lem::non_intersection_exponent} except we take $h$ to be a GFF on $\h$ whose boundary conditions are piecewise constant, change values at most a finite number of times, are at most $\delta^{-1}$ in magnitude, and take the form illustrated in Figure~\ref{fig::non_intersection_exponent} outside of the interval $(-\tfrac{1}{2}\delta \epsilon, \tfrac{1}{2} \delta \epsilon)$.  Then
\begin{equation}
\label{eqn::non_intersection_probability_general}
 \p[E_\delta] \asymp \epsilon^\alpha
\end{equation}
where the constants in $\asymp$ depend only on $\delta$, $\theta_1$, $\theta_2$, $\kappa$, and $a,b$.
\end{lemma}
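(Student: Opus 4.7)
The plan is to derive the result from Lemma~\ref{lem::non_intersection_exponent} via a Radon--Nikodym derivative argument, using that $h$ agrees with the GFF $\tilde h$ of Lemma~\ref{lem::non_intersection_exponent} outside the interval $I_\epsilon = (-\tfrac{1}{2}\delta\epsilon, \tfrac{1}{2}\delta\epsilon)$ and that condition~(iii) of $E_\delta$ keeps both paths at distance at least $\tfrac{1}{2}\delta\epsilon$ from $I_\epsilon$.

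First I would rescale by $z \mapsto z/\epsilon$, which sends the starting points to $\pm \tfrac{1}{2}$, the forbidden disk to $\delta \D$, the bad interval to $I := (-\tfrac{\delta}{2}, \tfrac{\delta}{2})$, and the target boundary to $\partial(\epsilon^{-1} \D)$. A direct application of Lemma~\ref{lem::rn} with a fixed bounded set $U$ is not possible, because $U$ would have to contain the paths all the way up to $\tau_i$ and this region is unbounded as $\epsilon \to 0$. To get around this, decompose the evolution at the intermediate stopping times $\sigma_i := \inf\{t \geq 0 : \eta_i(t) \in \partial \D\}$ into Stage~1 (up to $\sigma_i$) and Stage~2 (from $\sigma_i$ to $\tau_i$), and correspondingly write $E_\delta = E^{(1)} \cap E^{(2)}$, where $E^{(1)}$ is the event that both paths reach $\partial \D$ before intersecting each other or entering $\delta \D$, and $E^{(2)}$ encodes the non-intersection and the macroscopic distance condition~(ii) during Stage~2.

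For Stage~1, apply Lemma~\ref{lem::rn} with the bounded set $U := (\D \setminus \delta \D) \cap \h$. This $U$ contains the starting points $\pm \tfrac{1}{2}$ in its closure, lies at distance at least $\tfrac{\delta}{2}$ from $I$, and for each of the two configurations the force points in $\ol{U}$ (at $\pm \tfrac{1}{2}$) agree in both location and weight (since the boundary data match outside $I$), while the additional force points of $h$ in $I$---whose weights are bounded in terms of $\delta^{-1}$ by hypothesis---lie at distance at least $\tfrac{\delta}{2}$ from $U$. Applying Lemma~\ref{lem::rn} to one path conditional on the other (first to $\eta_1$, then to $\eta_2$ in the slit domain $\h \setminus \eta_1([0,\sigma_1])$), one obtains that the Radon--Nikodym derivative of the joint law of $(\eta_1|_{[0,\sigma_1]}, \eta_2|_{[0,\sigma_2]})$ restricted to $E^{(1)}$ between $h$ and $\tilde h$ is bounded above and below by positive constants depending only on $\delta, \kappa, \theta_1, \theta_2, a, b$.

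For Stage~2, invoke the domain Markov property of the GFF/SLE coupling. On $E^{(1)}$, the curves $\eta_1([0,\sigma_1])$ and $\eta_2([0,\sigma_2])$ together with the real segment $[-\tfrac{1}{2}, \tfrac{1}{2}]$ enclose $I$ inside a bounded complementary component of $\h \setminus (\eta_1([0,\sigma_1]) \cup \eta_2([0,\sigma_2]))$; hence the conditional GFF on the unbounded complementary component---where the Stage~2 continuations of the paths evolve---has boundary data that does not involve $I$, and this data is identical under $h$ and $\tilde h$. Consequently, the conditional probability of $E^{(2)}$ given Stage~1 is the same under the two measures. Combining Stages~1 and~2 gives that the probabilities of $E_\delta$ under $h$ and $\tilde h$ are comparable, and Lemma~\ref{lem::non_intersection_exponent} then yields $\p[E_\delta] \asymp \epsilon^\alpha$ with constants depending only on $\delta, \kappa, \theta_1, \theta_2, a, b$. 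The main technical point to verify is the topological claim for Stage~2 that $I$ lies in a bounded complementary component, which follows from the continuity of the two flow lines together with the fact that they are non-intersecting and both connect $\pm \tfrac{1}{2}$ to $\partial \D$ within $\h \cap \D$ on $E^{(1)}$.
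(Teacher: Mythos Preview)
Your overall strategy---compare to the field $\tilde h$ of Lemma~\ref{lem::non_intersection_exponent} via a Radon--Nikodym derivative bound---is exactly the paper's approach. But the two-stage decomposition you introduce to dodge the ``unbounded $U$'' issue creates a genuine gap, and the decomposition is in fact unnecessary.

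The gap is in Stage~2. Your claim that on $E^{(1)}$ the interval $I$ lies in a \emph{bounded} complementary component of $\h \setminus (\eta_1([0,\sigma_1]) \cup \eta_2([0,\sigma_2]))$ is false. For $\kappa \in (0,4)$ each $\eta_i|_{[0,\sigma_i]}$ is a simple arc with one endpoint on $\partial\h$ and the other endpoint $\eta_i(\sigma_i)$ in the \emph{interior} of $\h$ (on $\partial\D$). Removing two disjoint such slits from $\h$ leaves a \emph{connected} (hence unbounded) set: one can pass around the free tips $\eta_i(\sigma_i)$. Consequently $I$ sits on the boundary of the unbounded component, the conditional boundary data there does distinguish $h$ from $\tilde h$, and the Stage~2 continuations are \emph{not} identically distributed under the two fields. (Nothing in the hypotheses forces the paths to return to $\R$ and enclose $I$; condition~(iii) only keeps them outside $\delta\D$.)

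The paper avoids this by not decomposing at all. Work in the original scale and let $\sigma_i$ be the first time $\eta_i$ hits $\partial(\delta\epsilon\D)$; compare the \emph{full} laws of $(\eta_i|_{[0,\tau_i\wedge\sigma_i]})_{i=1,2}$ under $h$ and $\tilde h$. On $E_\delta$ condition~(iii) gives $\tau_i < \sigma_i$, so this stopping does nothing on $E_\delta$. The Radon--Nikodym derivative is bounded above and below by constants depending only on $\delta,\theta_1,\theta_2,\kappa,a,b$: by the scale invariance of $\SLE_\kappa(\underline\rho)$, rescaling by $\epsilon^{-1}$ leaves the derivative unchanged, and after rescaling the two boundary datas differ only on $(-\tfrac{\delta}{2},\tfrac{\delta}{2})$ by an amount at most $2\delta^{-1}$, while the (stopped) paths stay in $\h\setminus\delta\D$. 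This is precisely the situation covered by the GFF absolute continuity underlying Lemma~\ref{lem::rn} (cf.\ \cite[Lemma~2.8]{MW_INTERSECTIONS}); the harmonic extension of the boundary difference has Dirichlet energy on $\h\setminus\delta\D$ bounded in terms of $\delta$ alone. Your worry that the literal hypotheses of Lemma~\ref{lem::rn} require a fixed bounded $U$ is resolved by this scaling argument rather than by splitting the evolution.
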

\begin{proof}
Suppose that $\wt{h}$ is a GFF whose boundary conditions are as in the statement of Lemma~\ref{lem::non_intersection_exponent}, let $\wt{\eta}_i$ for $i=1,2$ be the flow line of $\wt{h}$ starting from $x_i$ and let $\wt{\tau}_i$ be the first time that $\wt{\eta}_i$ hits $\partial \D$.  We also let $\sigma_i$ (resp.\ $\wt{\sigma}_i$) for $i=1,2$ be the first time that $\eta_i$ (resp.\ $\wt{\eta}_i$) hits $\partial (\delta \epsilon \D)$.  Let $\mu$ denote the law of $(\eta_1|_{[0,\tau_1 \wedge \sigma_1]},\eta_2|_{[0,\tau_2 \wedge \sigma_2]})$ and let $\wt{\mu}$ denote the law of $(\wt{\eta}_1|_{[0,\wt{\tau}_1 \wedge \wt{\sigma}_1]}, \wt{\eta}_2|_{[0,\wt{\tau}_2 \wedge \wt{\sigma}_2]})$.  It follows from Lemma~\ref{lem::rn} that $\mu$ and $\wt{\mu}$ are mutually absolutely continuous with
\[ \frac{1}{C} \leq \frac{d\mu}{d\wt{\mu}} \leq C \quad\text{on}\quad E_\delta\]
where $C \geq 1$ is a constant depending only on $\delta$, $\kappa$, $\theta_1$, $\theta_2$, and $a,b$.  The desired result follows since $\tau_i \leq \sigma_i$ for $i=1,2$ on $E_\delta$.
\end{proof}

\begin{proof}[Proof of Proposition~\ref{prop::non_intersection_exponent}]
We are going to establish the upper bound by iteratively applying Lemma~\ref{lem::non_intersection_exponent}.  Fix $\beta \in (0,1)$ and let $n_\beta = \lfloor \beta \log \tfrac{1}{\epsilon} \rfloor$.  For each $i=1,2$ and $1 \leq j \leq n_\beta$, we let $\tau_{i,j}$ (resp.\ $\wh{\tau}_{i,j}$) be the first time that $\eta_i$ hits $\partial (\epsilon^\beta e^j \D)$ (resp.\ $\partial (\tfrac{1}{2} \epsilon^\beta e^j \D)$) and let $G_{j,\delta}$ be the event that
\begin{enumerate}
\item $\eta_1([0,\tau_{1,j}]) \cap \eta_2([0,\tau_{2,j}]) = \emptyset$ and
\item either $\dist(\eta_1([\wh{\tau}_{1,j},\tau_{1,j}]),\eta_2([\wh{\tau}_{2,j},\tau_{2,j}])) < \delta \epsilon^\beta e^j$ or $\big(\cup_{i=1}^2 \eta_i([0,\tau_{i,j}])\big) \cap (\delta \epsilon \D) \neq \emptyset$.
\end{enumerate}
Let $\CF_j$ be the $\sigma$-algebra generated by $\eta_i|_{[0,\tau_{i,j}]}$ for $i=1,2$.  It is easy to see that there exists a function $p \colon [0,1] \to [0,1]$ with $p(\delta) \downarrow 0$ as $\delta \downarrow 0$ such that 
\[ \p[G_{j+1,\delta} \giv \CF_j ] \one_{G_{j,\delta}}  \leq p(\delta) \quad\text{for}\quad 1 \leq j \leq n_\beta.\]
Consequently, it follows that $\p[\cap_{j=1}^{n_\beta} G_{j,\delta}] \leq (p(\delta))^{n_\beta}$.  Choose $\delta > 0$ sufficiently small so that $p(\delta)^{n_\beta} \leq \epsilon^{(1-\beta) \alpha}$.  For each $1 \leq j \leq n_\beta$, let $E_{j,\delta}$ be the event that $\eta_1([0,\tau_{1,j}]) \cap \eta_2([0,\tau_{2,j}]) = \emptyset$, $\dist(\eta_1([\wh{\tau}_{1,j},\tau_{1,j}]),\eta_2([\wh{\tau}_{2,j},\tau_{2,j}])) \geq \delta \epsilon^\beta e^j$, and $\eta_i([0,\tau_{i,j}]) \cap (\delta \epsilon \D) \neq \emptyset$ for $i=1,2$.  We have that
\begin{align*}
   \p[F]
&\leq \p\big[ F \cap \big( \cup_j G_{j,\delta}^c \big) \big] + \p[ \cap_j G_{j,\delta}]\\
&\leq \sum_{j=1}^{n_\beta} \p[E_{j,\delta}] + \epsilon^{(1-\beta) \alpha}\\
&\lesssim \big(\log \tfrac{1}{\epsilon}\big) \epsilon^{(1-\beta) \alpha} + \epsilon^{(1-\beta) \alpha} \quad\text{(Lemma~\ref{lem::non_intersection_exponent}).}
\end{align*}
The upper bound follows because this holds for every $\beta \in (0,1)$.  The lower bound follows because we have that $\p[F] \geq \p[E_\delta] \asymp \epsilon^\alpha$ by Lemma~\ref{lem::non_intersection_exponent}.
\end{proof}

\subsection{Proof of the upper bound}
\label{subsec::dimension_upper_bound}

\begin{figure}[ht!]
\begin{center}
\includegraphics[scale=0.85]{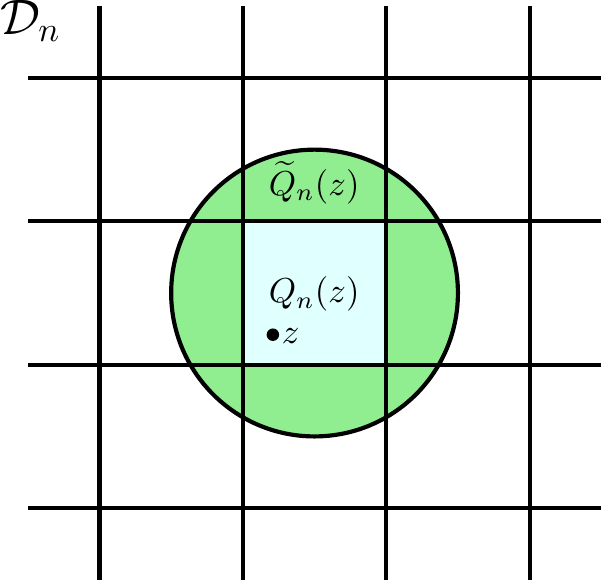}
\end{center}
\caption{
\label{fig::grids}
Shown in the illustration are $Q_n(z)$ and $\wt{Q}_n(z)$ for a given point $z \in \h$.
}
\end{figure}

We are now going to combine the estimates of Section~\ref{subsec::derivative_estimate} and Section~\ref{subsec::martingale_conditioning} to complete the proof of the upper bound.  Throughout, we suppose that $h$ is a GFF on $\h$ with the boundary data illustrated in Figure~\ref{fig::counterflowline_hit_conformal_maps} and let $\eta'$ be the counterflow line of $h$ starting from $0$.  For each $n \in \N$, we let $\CD_n$ be the set of squares with side length $2^{-n}$ and with corners in $2^{-n} \Z^2$ which are contained in $\ol{\h}$.  For each $Q \in \CD_n$, let $z(Q)$ be the center of~$Q$ and let $\wt{Q}_n(Q) = B(z(Q),2^{1-n})$.  Note that $Q \subseteq \wt{Q}_n(Q)$.  For each $z \in \h$, let $Q_n(z)$ be the element of $\CD_n$ which contains $z$ and let $\wt{Q}_n(z) = \wt{Q}_n(Q_n(z))$.  See Figure~\ref{fig::grids} for an illustration of these definitions.

For each $n \in \N$, we let $\zeta_{z,n} = \inf\{t \geq 0 : \eta'(t) \in \partial \wt{Q}_n(z)\}$ and
\begin{equation}
\label{eqn::rn_definition}
r_{z,n} = \frac{2^{6-n} |f_{\zeta_{z,n}}'(z)|}{ \sin(\Theta_{\zeta_{z,n}}^z)} \quad\text{on}\quad \{\zeta_{z,n} < \infty\}.
\end{equation}
We also let $\Theta_t^z$ be the process of~\eqref{eqn::mg_processes} with respect to $z$.

\begin{lemma}
\label{lem::image_contained}
Fix $z \in \h$ and $n \in \N$.  On $\{\zeta_{z,n} < \infty\}$, the following hold:
\begin{enumerate}[(i)]
\item\label{it::contained1} $f_{\zeta_{z,n}}(\wt{Q}_{n+3}(z)) \subseteq  r_{z,n} \D \cap \h$.
\item\label{it::contained2} There exists constants $c_1,c_2 > 0$ such that
\[ c_1 r_{z,n} \big( 2^{k/2} \sin(\Theta_{\zeta_{z,n}}^z) - c_2 \big) \D \cap \h \subseteq f_{\zeta_{z,n}}(\wt{Q}_{n-k}(z)) \quad\text{for each} \quad 1 \leq k \leq n.\]
\end{enumerate}
\end{lemma}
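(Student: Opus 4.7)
I plan to derive both parts from the Koebe distortion and growth theorems (Lemma~\ref{lem::distortion} and Lemma~\ref{lem::ball_size}) applied to $f = f_{\zeta_{z,n}}\colon \h_{\zeta_{z,n}} \to \h$, together with the square-root behavior of $f$ near the tip $\eta'(\zeta_{z,n})$.  Abbreviate $d = \dist(z, \partial \h_{\zeta_{z,n}})$, $Y = Y_{\zeta_{z,n}}$, $Z = Z_{\zeta_{z,n}}$, $\Theta = \Theta_{\zeta_{z,n}}$, $\Upsilon = \Upsilon_{\zeta_{z,n}}$.  Using $|f'(z)| = Y/\Upsilon$ and $Y = |Z|\sin \Theta$, I can rewrite $r_{z,n} = 2^{6-n}|Z|/\Upsilon$; Lemma~\ref{lem::distortion} applied to $f$ and to its inverse gives $\Upsilon \asymp d$, so $r_{z,n} \asymp 2^{-n}|Z|/d$.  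Since $\eta'(\zeta_{z,n}) \in \partial \wt{Q}_n(z)$ and $z$ lies in the dyadic square $Q_n(z)$, we have $d \leq 2^{2-n}$, which combined with the numerical prefactor $2^6$ in the definition of $r_{z,n}$ yields $|Z| \leq r_{z,n}/4$.

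For~(i), a direct geometric check shows $\wt{Q}_{n+3}(z) \subseteq B(z, C_0 2^{-n})$ for a small absolute constant $C_0$.  When $d \geq 2 C_0 \cdot 2^{-n}$, applying Lemma~\ref{lem::ball_size} to $f$ at $z$ with $r = C_0 \cdot 2^{-n}/d \leq 1/2$ yields, for every $w \in \wt{Q}_{n+3}(z) \cap \h_{\zeta_{z,n}}$,
\[ |f(w) - Z| \lesssim |w-z| \cdot \frac{Y}{d} \lesssim 2^{-n} \cdot \frac{|Z|\sin \Theta}{d} \asymp r_{z,n}\sin \Theta \leq r_{z,n}. \]
A triangle inequality together with $|Z| \leq r_{z,n}/4$ then gives $|f(w)| \leq r_{z,n}$, establishing~(i).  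In the complementary regime $d < 2 C_0 \cdot 2^{-n}$, the identity $r_{z,n} \asymp 2^{-n}|Z|/d$ shows that $r_{z,n}$ exceeds $|Z|$ by a large absolute constant, and one controls $|f(w)|$ via the Beurling estimate (Theorem~\ref{thm::beurling}) applied to a Brownian motion in $\h_{\zeta_{z,n}}$ starting from $w$, whose image under $f$ is a Brownian motion in $\h$; the harmonic measure bound then keeps $|f(w)|$ below $r_{z,n}$.

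For~(ii), the key geometric input is that $\eta'(\zeta_{z,n})$ lies on $\partial \wt{Q}_n(z) \subseteq \wt{Q}_{n-k}(z)$ (for $k \geq 2$) and is mapped by $f$ to $0 \in \partial \h$.  The argument combines two estimates.  Lemma~\ref{lem::ball_size} applied to $f^{-1}$ at $Z$ with $\dist(Z,\partial \h) = Y$ gives $|f^{-1}(\zeta) - z| \leq 4 \rho d /(1-\rho)^2$ whenever $|\zeta - Z| \leq \rho Y$; choosing $\rho$ so that the right side equals $2^{k-n}$ produces a ``bulk'' disk of radius $\asymp Y$ around $Z$ inside $f(\wt{Q}_{n-k}(z))$.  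To extend the image down to $0 \in \partial \h$, one exploits that $f$ has a square-root singularity at the tip: a neighborhood of $\eta'(\zeta_{z,n})$ of radius $\sim 2^{k-n}$ in $\h_{\zeta_{z,n}}$ is mapped to a neighborhood of $0$ in $\h$ of radius $\sim \sqrt{2^{-n} \cdot 2^{k-n}} = 2^{k/2 - n}$, with $2^{-n}$ being the scale of the hull tip relative to $z$.  Translating back through $r_{z,n} \asymp 2^{-n}|Z|/d$ and $Y = |Z|\sin \Theta$, the two estimates combine to give $c_1 r_{z,n}(2^{k/2}\sin \Theta - c_2)\D \cap \h \subseteq f(\wt{Q}_{n-k}(z))$; the factor $2^{k/2}$ tracks the square-root near-tip scaling, and $c_2$ absorbs lower-order corrections from comparing the centering at $Z$ to the centering at $0$.

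The main obstacle is making the near-tip square-root estimate precise.  The asymptotic $f(w) \sim \sqrt{w - \eta'(\zeta_{z,n})}$ with the appropriate prefactor must be turned into a two-sided comparison against the explicit slit map $z \mapsto \sqrt{z^2 + a^2}$, either by a direct analysis via the Loewner equation or by a conformal reduction to a reference slit half-plane; the uniformity in the hull geometry is what makes this delicate.  The remaining bookkeeping of absolute constants through the identifications $\Upsilon \asymp d$, $r_{z,n} \asymp 2^{-n}|Z|/d$, and through the $\sin \Theta$ factor is routine.
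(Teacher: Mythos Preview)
For part~(i), your main case is exactly the paper's argument, but the ``complementary regime'' $d < 2C_0 \cdot 2^{-n}$ never occurs: since $\zeta_{z,n}$ is the \emph{first} time $\eta'$ hits $\partial \wt{Q}_n(z)$, the curve has not entered the open ball, so $d \geq \dist(z, \partial \wt{Q}_n(z)) \geq 2^{-n}$. A single application of Lemma~\ref{lem::ball_size} with $r = \tfrac12$ then suffices, and your Beurling fallback for small $d$ is unnecessary.

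For part~(ii) there is a genuine gap, and your approach diverges from the paper's. You propose to combine a Koebe ``bulk'' disk of radius $\asymp Y$ around $Z$ with a separate square-root estimate near the tip $\eta'(\zeta_{z,n})$. But Koebe on $f^{-1}$ gives at best radius $\asymp Y$, not $Y \cdot 2^{k/2}$, so the entire $2^{k/2}$ enhancement must come from the tip analysis---and as you acknowledge, a uniform two-sided comparison $|f(w)| \asymp c\,|w - \eta'(\zeta_{z,n})|^{1/2}$ with constants independent of the hull geometry is exactly the hard part you have not supplied; you also do not address how the bulk and tip regions are glued into a single disk centred at $0$. The paper bypasses all of this via the Beurling estimate (Theorem~\ref{thm::beurling}): a Brownian motion from $z$ reaches $\partial \wt{Q}_{n-k}(z)$ without hitting $\eta'([0,\zeta_{z,n}])$ with probability $\lesssim 2^{-k/2}$; conformal invariance converts this directly into $\dist(f(z), f(\partial \wt{Q}_{n-k}(z))) \gtrsim Y \cdot 2^{k/2}$, and a triangle inequality against $|Z|$ finishes. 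The Beurling estimate is precisely the standard device that encodes the square-root behaviour you want, uniformly and in one line.
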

\begin{proof}
Throughout the proof, we shall assume that we are working on $\{\zeta_{z,n} < \infty\}$.  We first note that 
\[ \dist(z,\partial \wt{Q}_n(z)) \geq \dist(Q_n(z),\partial \wt{Q}_n(z)) = 2^{1-n} - 2^{-n-1/2} \geq 2^{-n}.\]
Consequently, we have that
\begin{equation}
\label{eqn::qz_dist}
  2^{-n} \leq   \dist(z,\partial \wt{Q}_n(z))  \leq 2^{1-n} \quad\text{for}\quad z \in \h \quad\text{and}\quad n \in \N.
\end{equation}
Hence applying Lemma~\ref{lem::ball_size} with $r = \tfrac{1}{2}$, we have that
\begin{equation}
\label{eqn::fzeta_diameter_bound}
\begin{split}
|f_{\zeta_{z,n}}(z) - f_{\zeta_{z,n}}(w)| 
&\leq 4 \times \frac{2^{-n-1}}{1-(1/2)^2} \times  \frac{\im(f_{\zeta_{z,n}}(z))}{2^{-n}}\\
&\leq 4 |f_{\zeta_{z,n}}(z)| \quad\text{for}\quad w \in \wt{Q}_{n+3}(z).
\end{split}
\end{equation}
This leaves us to bound $|f_{\zeta_{z,n}}(z)|$.  Applying Lemma~\ref{lem::distortion} and~\eqref{eqn::qz_dist}, we have that
\begin{equation}
\label{eqn::fzeta_im_bound}
\frac{\im(f_{\zeta_{z,n}}(z)) }{8} \leq \frac{|f_{\zeta_{z,n}}'(z)|}{2^n} \leq 4 \im(f_{\zeta_{z,n}}(z)).
\end{equation}
Applying the lower bound of~\eqref{eqn::fzeta_im_bound} in the inequality, we thus have that
\begin{align}
\label{eqn::fzeta_im_bound2}
    |f_{\zeta_{z,n}}(z)|
&= \frac{\im(f_{\zeta_{z,n}}(z))}{\sin(\Theta_{\zeta_{z,n}}^z)}
\leq \frac{2^{3-n} |f_{\zeta_{z,n}}'(z)|}{\sin (\Theta_{\zeta_{z,n}}^z)}.
\end{align}
Since 
\[ |f_{\zeta_{z,n}}(w)| \leq |f_{\zeta_{z,n}}(z) - f_{\zeta_{z,n}}(w)| + |f_{\zeta_{z,n}}(z)|,\]
combining~\eqref{eqn::fzeta_diameter_bound} with~\eqref{eqn::fzeta_im_bound2} gives~\eqref{it::contained1}.

To prove~\eqref{it::contained2}, we first note by the \hyperref[thm::beurling]{Beurling estimate} that there exists a constant $c_3 > 0$ such that the probability that a Brownian motion starting from $z$ hits $\partial \wt{Q}_{n-k}(z)$ before hitting $\eta'([0,\zeta_{z,n}])$ is at most $c_3 2^{-k/2}$.  Consequently, by the conformal invariance of Brownian motion, the probability that a Brownian motion starting from $f_{\zeta_{z,n}}(z)$ hits $f_{\zeta_{z,n}}(\partial \wt{Q}_{n-k}(z))$ before hitting $f_{\zeta_{z,n}}(\eta'([0,\zeta_{z,n}]))$ is also at most $c_3 2^{-k/2}$.  By standard estimates for Brownian motion, it follows that there exists a constant $c_4 > 0$ such that 
\begin{equation}
\label{eqn::fznk_lbd}
\dist(f_{\zeta_{z,n}}(z),f_{\zeta_{z,n}}(\partial \wt{Q}_{n-k}(z))) \geq c_4 \im(f_{\zeta_{z,n}}(z)) 2^{k/2}.
\end{equation}
Consequently, we have that
\begin{align}
           \dist(0, f_{\zeta_{z,n}}(\partial\wt{Q}_{n-k}(z)))
\geq& \dist(f_{\zeta_{z,n}}(z), f_{\zeta_{z,n}}(\partial\wt{Q}_{n-k}(z))) - |f_{\zeta_{z,n}}(z)| \notag\\
\geq& c_4 \im(f_{\zeta_{z,n}}(z)) 2^{k/2} - |f_{\zeta_{z,n}}(z)| \quad\text{(by~\eqref{eqn::fznk_lbd})} \notag\\
\geq &c_4 \im(f_{\zeta_{z,n}}(z)) 2^{k/2} - \frac{2^{3-n} |f_{\zeta_{z,n}}'(z)|}{\sin(\Theta_{\zeta_{z,n}}^z)} \quad\text{(by~\eqref{eqn::fzeta_im_bound2})}. \label{eqn::dist_q_0}
\end{align}
In analogy with~\eqref{eqn::fzeta_im_bound2}, the upper bound of~\eqref{eqn::fzeta_im_bound} implies that
\begin{equation}
\label{eqn::fzeta_im_bound3}
    |f_{\zeta_{z,n}}(z)|
= \frac{\im(f_{\zeta_{z,n}}(z))}{\sin(\Theta_{\zeta_{z,n}}^z)}
\geq \frac{2^{-2-n} |f_{\zeta_{z,n}}'(z)|}{\sin (\Theta_{\zeta_{z,n}}^z)}.
\end{equation}
By the definition of $r_{z,n}$ and~\eqref{eqn::fzeta_im_bound3}, it follows that there exists constants $c_1,c_2 > 0$ such that the expression in~\eqref{eqn::dist_q_0} is bounded from below by
\[ c_1 r_{z,n} \big( 2^{k/2} \sin(\Theta_{\zeta_{z,n}}^z) - c_2 \big).\]
This proves the desired result.
\end{proof}

\begin{figure}[ht!]
\begin{center}
\includegraphics[scale=0.85]{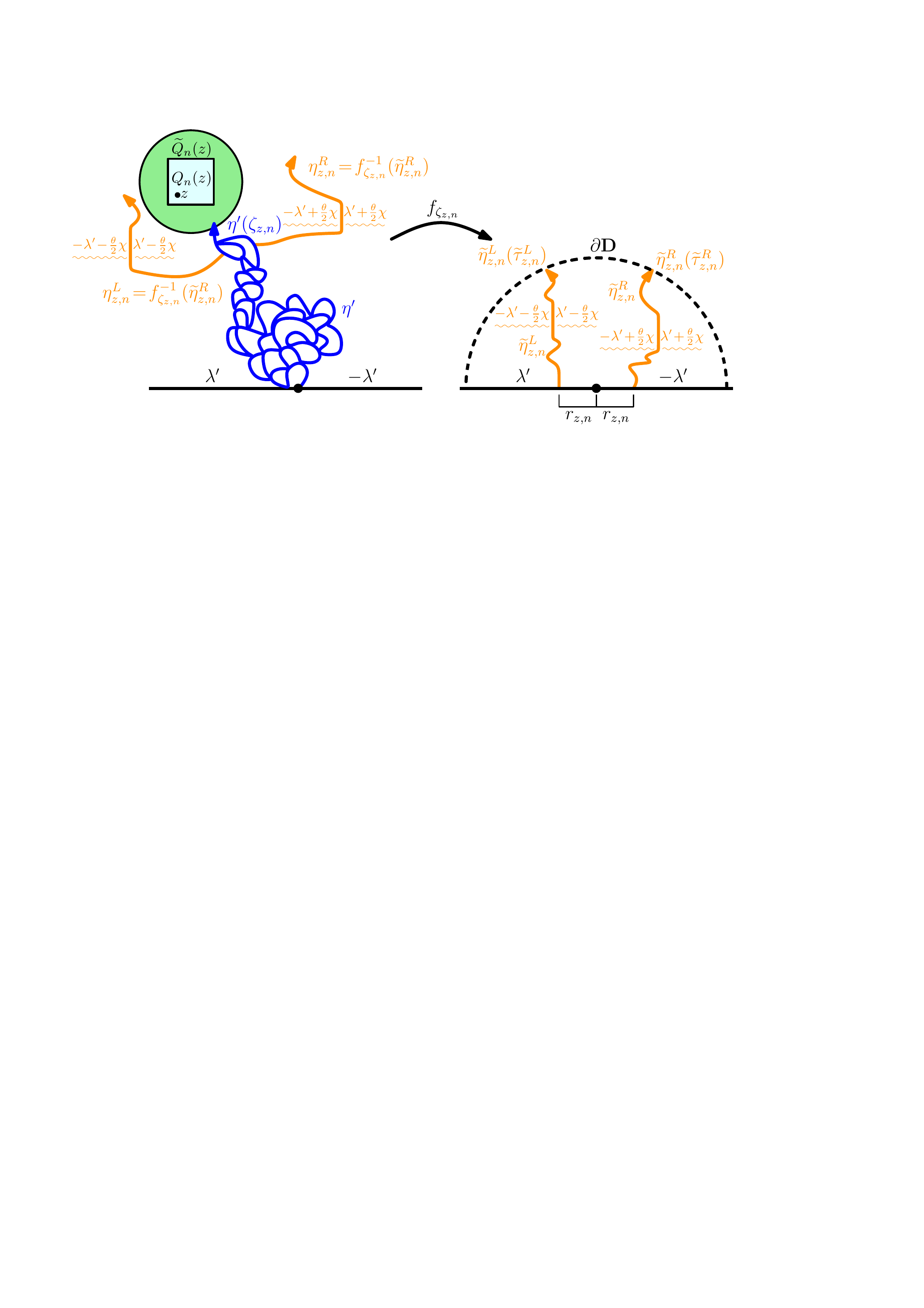}
\end{center}
\caption{
\label{fig::paths_sides}
An illustration of the construction of the paths $\eta_{z,n}^L$ and $\eta_{z,n}^R$.}
\end{figure}

On $\{\zeta_{z,n} < \infty\}$, we let $\wt{h}_n = h \circ f_{\zeta_{z,n}}^{-1} - \chi \arg (f_{\zeta_{z,n}}^{-1})'$ be the GFF which arises after conformally mapping away $\eta'([0,\zeta_{z,n}])$.  We then let $\wt{\eta}_{z,n}^L$ (resp.\ $\wt{\eta}_{z,n}^R$) be the flow line of $\wt{h}_n$ starting from $-r_{z,n}$ (resp.\ $r_{z,n}$) with angle $\tfrac{\theta}{2}$ (resp.\ $-\tfrac{\theta}{2}$).  Let $\wt{\tau}_{z,n}^L$ (resp.\ $\wt{\tau}_{z,n}^R$) be the first time that $\wt{\eta}_{z,n}^L$ (resp.\ $\wt{\eta}_{z,n}^R$) hits $\partial \D$.  For $q \in \{L,R\}$, let $\eta_{z,n}^q = f_{\zeta_{z,n}}^{-1}(\wt{\eta}_{z,n}^q)$.  See Figure~\ref{fig::paths_sides} for an illustration of the construction.  We are now going to show that the set of points $z$ at which the paths $\eta'$, $\eta_{z,n}^L$, $\eta_{z,n}^R$ behave in a consistently pathological manner as $\eta'$ approaches $z$ is almost surely empty.  In particular, we will prove in Lemma~\ref{lem::bad_hit_angle} that the set $\CS^{1,\delta}$ of points $z$ that $\eta'$ approaches at an angle which is consistently outside of $(\delta,\pi-\delta)$ is almost surely empty for a sufficiently small choice of $\delta > 0$.  Then we will show in Lemma~\ref{lem::bad_hit_small_ball} that the set $\CS^{2,k}$ of points $z$ that $\eta'$ approaches and for consistently large values of $n$ either $\wt{\eta}_{z,n-k}^L$ or $\wt{\eta}_{z,n-k}^R$ hits $\partial (r_{z,n} \D)$ is almost surely empty for a sufficiently large choice of $k$.  These results, in turn, will be used in the proof of Proposition~\ref{prop::dimension_upper_bound} to generate a cover of $\lightcone(\theta)$ in the manner described in Figures~\ref{fig::flowline_hit_point}--\ref{fig::counterflowline_hit_conformal_maps}.

\begin{lemma}
\label{lem::bad_hit_angle}
For each $z \in \h$, $n \in \N$, and $\delta > 0$, we let $E_{z,n}^{1,\delta} = \{ \zeta_{z,n} < \infty,\ \Theta_{\zeta_{z,n}}^z \notin (\delta,\pi-\delta)\}$.  Let $\CS_n^{1,\delta}$ be the set of points $z \in \h$ such that $E_{z,n}^{1,\delta}$ occurs and let $\CS^{1,\delta} = \cup_{n=1}^\infty \cap_{m=n}^\infty \CS_m^{1,\delta}$.  There exists $\delta_0 > 0$ such that for every $\delta \in (0,\delta_0)$ we have that $\CS^{1,\delta} = \emptyset$ almost surely.
\end{lemma}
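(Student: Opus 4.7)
The plan is to combine a one-point estimate for the bad-angle event at a single scale with a Markovian iteration across scales and a covering argument. Let $K \subset \h$ be a compact set at positive distance from $\partial\h$ and from the boundary-data change points of $h$; it suffices to show $\CS^{1,\delta} \cap K = \emptyset$ almost surely and then exhaust $\h$ by a countable union of such $K$'s.

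\emph{Step 1 (one-point estimate with angle control).} First I would show that there exists $\beta = \beta(\kappa) > 0$ such that for every $z \in K$, every small $\delta > 0$, and every $n$ large,
\[
\p[E_{z,n}^{1,\delta}] \leq C \delta^{\beta},
\]
with $C = C(K,h,\kappa)$. This follows by applying the family of Schramm--Wilson local martingales $M_t^{(r)} = S_t^{-r}\Upsilon_t^{\xi+r}\Delta_t^{\nu+r}$ from Section~\ref{subsec::derivative_estimate} with a small parameter $r > 0$: on $\{S_{\zeta_{z,n}} \leq \sin\delta\}$ one has $S_{\zeta_{z,n}}^{-r} \geq (\sin\delta)^{-r}$, so combining the martingale identity for $M^{(r)}$ with the moment bound \eqref{eqn::derivative_estimate} and Markov's inequality at $\zeta_{z,n}$ yields the desired angle-constrained estimate.

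\emph{Step 2 (Markovian iteration).} Next, by the strong Markov property at $\zeta_{z,n}$ and the change-of-coordinates formula \eqref{eqn:ig_change_of_coordinates}, the continuation of $\eta'$ past $\zeta_{z,n}$, viewed under $f_{\zeta_{z,n}}$, is a counterflow line on $\h$ of a GFF whose boundary data is determined by the past of $\eta'$. By Lemma~\ref{lem::rn}, its law up to a suitable exit time from a macroscopic neighborhood of $Z_{\zeta_{z,n}}$ has bounded Radon--Nikodym derivative with respect to a reference $\SLE_{\kappa'}(\underline{\rho})$ on $\h$, provided that a fixed integer $K_0 = K_0(\kappa,K,h)$ is chosen large enough so that, via Lemma~\ref{lem::distortion}, the image of the smaller ball $\wt{Q}_{n+K_0}(z)$ sits at macroscopic distance from the new driving function and from all image force points. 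Applying Step~1 to this conditional configuration gives
\[
\p\bigl[E_{z,n+K_0}^{1,\delta}\bigm|\CF_{\zeta_{z,n}}\bigr]\,\one_{E_{z,n}^{1,\delta}} \leq C_1 \delta^{\beta}\,\one_{E_{z,n}^{1,\delta}}
\]
for a constant $C_1 = C_1(\kappa,K,h)$, and iterating over $J+1$ successive scales yields
\[
\p\bigl[\cap_{j=0}^{J} E_{z,n+jK_0}^{1,\delta}\bigr] \leq (C_1 \delta^{\beta})^{J+1}.
\]

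\emph{Step 3 (covering and main obstacle).} Finally, fix $n_0 \in \N$ and a buffer $c \in \N$, and cover $K$ by the $O(4^{n_0+JK_0+c})$ cubes of $\CD_{n_0+JK_0+c}$. A short continuity argument---exploiting that for $z \in Q$ the displacement $|z-z(Q)|$ is much smaller than the radii $2^{1-(n_0+jK_0)}$ of the sets $\wt{Q}_{n_0+jK_0}(z)$ for $0 \leq j \leq J$---shows that if $\cap_{m \geq n_0}\CS_m^{1,\delta/2}\cap K \neq \emptyset$ then some cube $Q$ in the cover must satisfy $z(Q) \in \cap_{j=0}^{J}\CS_{n_0+jK_0}^{1,\delta}$. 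By Step~2 and a union bound,
\[
\p\bigl[\cap_{m \geq n_0}\CS_m^{1,\delta/2}\cap K\neq\emptyset\bigr] \leq C_2\cdot 4^{n_0+JK_0+c}(C_1\delta^{\beta})^{J+1}.
\]
Choosing $\delta_0 > 0$ so that $4^{K_0}C_1\delta_0^{\beta} < 1$ and letting $J \to \infty$ for fixed $n_0$ makes the right-hand side vanish for each $\delta \in (0,\delta_0)$; taking a countable union over $n_0$ and over an exhaustion of $\h$ by sets $K$ then gives $\CS^{1,\delta/2} = \emptyset$ almost surely, which establishes the lemma (with the $\delta_0$ of the statement replaced by $\delta_0/2$). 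The main obstacle is Step~2: one must verify uniformity of the Step~1 constants after conditioning on the past of $\eta'$, and for this Lemma~\ref{lem::rn} must be applied together with Lemma~\ref{lem::distortion} to ensure that, in the rescaled image coordinates, the target sits at a uniformly controlled position relative to the new driving function and the image force points, independently of $n$ and of the past trajectory of $\eta'$.
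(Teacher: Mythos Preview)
Your overall strategy---a one-scale angle estimate, a Markovian iteration, and a covering argument---matches the paper's architecture, and Step~3 is essentially the same as the paper's Harnack/union-bound step. The difficulty is in Step~2, and the obstacle you flag there is real but not resolved by the tools you invoke.

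The problem is this: once you condition on $E_{z,n}^{1,\delta}$ and pass to the centered coordinates, the target sits at $w = Z_{\zeta_{z,n}}$ with $\arg w \notin (\delta,\pi-\delta)$. The angle process of the continuation, evaluated at $w$, is exactly the continuation of $\Theta^z$. So applying your Step~1 bound ``to this conditional configuration'' means applying it with a target whose argument is already $\delta$-close to $\{0,\pi\}$. But the constant in your Step~1 estimate necessarily depends on the starting angle: with the martingale $M_t = S_t^{-r}\Upsilon_t^{\xi+r}\Delta_t^{\nu+r}$ one has $M_0 \asymp (\sin\arg w)^{-r}$, and on the conditioning event this is already $\geq (\sin\delta)^{-r}$. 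Running the same Markov-type inequality at the next scale then yields a bound of order $(\sin\delta)^{r}\cdot(\sin\delta)^{-r} = 1$, i.e.\ nothing. Lemma~\ref{lem::rn} and Lemma~\ref{lem::distortion} control the force points and the geometry of the image ball, but they do not move the target away from the real axis, which is what kills the estimate.

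The paper sidesteps this by not using the moment martingale at all for the iteration. It invokes the radial coordinate change \cite[Theorem~3]{SCHRAMM_WILSON} to view $\Theta^z$, reparameterized by $\log$ conformal radius, as the autonomous one-dimensional diffusion $d\Theta = (\kappa'-4)\cot\Theta\,dt + \sqrt{\kappa'}\,dB$. This makes the conditional step a statement purely about this diffusion started near $\{0,\pi\}$: for $\kappa' \in (4,8)$ it is likely to \emph{hit} $\{0,\pi\}$ within one unit of time (so $\zeta_{z,n+8} = \infty$ and $E_{z,n+8}^{1,\delta}$ fails for that reason), while for $\kappa' \geq 8$ it is unlikely to remain in $[0,\delta]\cup[\pi-\delta,\pi]$. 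Either way one gets $\p[E_{z,n+8}^{1,\delta}\mid \Theta^z_{\zeta_{z,n}} = \theta,\ \zeta_{z,n}<\infty] \leq p(\delta)$ with $p(\delta)\to 0$, uniformly in $\theta \notin (\delta,\pi-\delta)$. The point is that this bound is a property of the \emph{diffusion} $\Theta$ alone and carries no hidden dependence on where the target sits in $\h$; that is exactly the uniformity your Step~2 needs and that your martingale argument cannot supply.
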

\noindent We remark that a version of Lemma~\ref{lem::bad_hit_angle} is proved in \cite{MW_INTERSECTIONS} for $\kappa \in (0,4)$ using a different argument.
\begin{proof}[Proof of Lemma~\ref{lem::bad_hit_angle}]
By \cite[Theorem~3]{SCHRAMM_WILSON}, we can view $\eta'$ as a radial $\SLE_{\kappa'}(\kappa'-6)$ process targeted at $z$.  After reparameterizing the path by $\log$ conformal radius, $\Theta^z$ solves the SDE
\[ d \Theta_t^z = (\kappa'-4) \cot\left(\Theta_t^z\right) dt + \sqrt{\kappa'} dB_t\]
where $B$ is a standard Brownian motion (see \cite[Equation (4.1)]{SHE_CLE}).
When $\kappa' \in (4,8)$, $\kappa'-6 < \tfrac{\kappa'}{2}-2$ which means that $\Theta^z$ almost surely hits either $0$ or $\pi$ in finite time \cite[Lemma~1.26]{LAW05}.  In particular, if $\Theta_t^z = \theta \notin (\delta,\pi-\delta)$ for some fixed $t$ then the probability that $\Theta_s^z \in \{0,\pi\}$ for some $s \in [t,t+1]$ tends to $1$ as $\delta \to 0$ uniformly in $\theta \notin (\delta,\pi-\delta)$.  It follows that there exists a function $p_1 \colon [0,\tfrac{\pi}{2}] \to [0,1]$ with $p_1(\delta) \to 0$ as $\delta \to 0$ such that
\begin{equation}
\label{eqn::given_hit_probability}
 \p[ \zeta_{z,n+8} < \infty \giv \Theta_{\zeta_{z,n}}^z =\theta,\ \zeta_{z,n} < \infty] \leq p_1(\delta) \quad\text{for all} \quad  \theta \notin (\delta,\pi-\delta)
 \end{equation}
(by standard distortion estimates for conformal maps, it takes at least $1$ unit of $\log$ conformal radius time for the path to travel from $\partial \wt{Q}_n(z)$ to $\partial \wt{Q}_{n+8}(z)$).  Iterating~\eqref{eqn::given_hit_probability} implies that with $p(\delta) = p_1^{1/8}(\delta)$ we have that
\begin{equation}
\label{eqn::consistently_bad_hit}
\p[ \cap_{m=r}^n E_{z,m}^{1,\delta}] \leq p(\delta)^{n-r} \quad\text{for all}\quad n \geq r \geq -\log_2 \im(z) + 2
\end{equation}
($r \geq -\log_2 \im(z) + 2$ implies $\wt{Q}_r(z) \subseteq \h$.)

Note that for $Q \in \CD_j$, the function $Q \to \R$ given by $w \mapsto \Theta_{\zeta_{w,j}}^w$ is positive and harmonic.  Consequently, the Harnack inequality \cite[Proposition~2.22]{LAW05} implies that there exists a constant $K \geq 1$ such that for all $z,w \in Q$ we have that $\Theta_{\zeta_{w,j}}^w \leq K \Theta_{\zeta_{z,j}}^z$.  Thus letting $E_{Q,m}^{1,\delta} = \cup_{z \in Q} E_{z,m}^{1,\delta}$ for $m \leq j$, it follows from~\eqref{eqn::consistently_bad_hit} that
\begin{equation}
\label{eqn::consistently_bad_hit_square}
\p[ \cap_{m=r}^n E_{Q,m}^{1,\delta}] \leq p(K\delta)^{n-r} \quad\text{for all}\quad j \geq n \geq r.
\end{equation}
Fix $\varsigma \in (0,1)$ and let $r = -\log_2 \varsigma+2$, $U_\varsigma = (\varsigma^{-1} \D) \cap (\h + \varsigma i)$, and $\CV_j^{\varsigma,\delta}$ for $j \geq r$ consist of those $Q \in \CD_j$ with $Q  \subseteq U_\varsigma$ such that $\cap_{m=r}^j E_{Q,m}^\delta$ occurs.  It is easy to see that there exists a constant $C > 0$ such that
\begin{equation}
\label{eqn::consistently_bad_summation}
 \sum_{m=r}^\infty \E[|\CV_m^{\varsigma,\delta}|] \leq \frac{C}{\varsigma^2} \sum_{m=r}^\infty 2^{2m} p(K\delta)^{m-r}.
\end{equation}
Consequently, choosing $\delta > 0$ sufficiently small so that $4p(K \delta) < 1$, we see that the summations in~\eqref{eqn::consistently_bad_summation} are finite.  This implies that the set of squares in $\CV_m^{\varsigma,\delta}$ is non-empty for finitely many $m$ almost surely, from which the claimed result follows for $\kappa' \in (4,8)$.

For $\kappa' \geq 8$, we have that $\kappa'-6 \geq \tfrac{\kappa'}{2}-2$ which means that $\Theta^z$ almost surely does not hit $0$ or $\pi$ \cite[Lemma~1.26]{LAW05}.  In this case, it is easy to see from the form of the SDE that there exists a function $q_1 \colon [0,\tfrac{\pi}{2}] \to [0,1]$ such that
\[ \p[ E_{z,n+8}^{1,\delta} \giv \Theta_{\zeta_{z,n}}^z =\theta,\ \zeta_{z,n} < \infty] \leq q_1(\delta) \quad\text{for all}\quad \theta \notin (\delta,\pi-\delta).\]
Therefore the same argument we used to complete the proof for $\kappa' \in (4,8)$ also applies here, which proves the claimed result for $\kappa' \geq 8$.
\end{proof}

\begin{figure}[ht!]
\begin{center}
\includegraphics[scale=0.85]{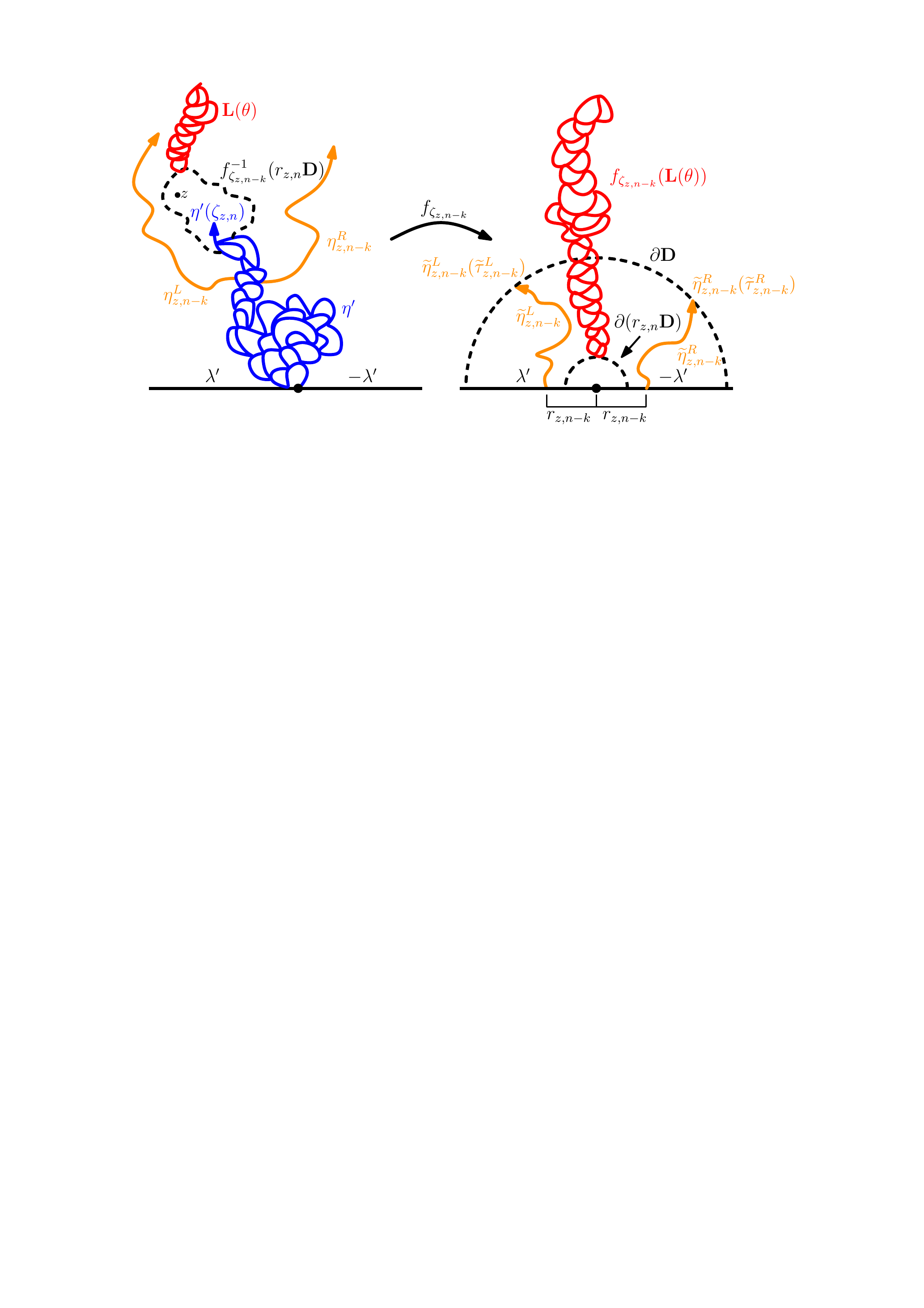}
\end{center}
\caption{\label{fig::avoid_small_ball} In Lemma~\ref{lem::bad_hit_small_ball}, we show that there exists $k_0$ such that for $k \geq k_0$ the set of points $z \in \h$ such that $\{\zeta_{z,n} < \infty\}$ and either $\wt{\eta}_{z,n-k}^L$ or $\wt{\eta}_{z,n-k}^R$ hits $\partial (r_{z,n} \D)$ before hitting $\partial \D$ for arbitrarily large values of $n$ is almost surely empty.  On the event that $\lightcone(\theta)$ hits $f_{\zeta_{z,n-k}}^{-1}(\partial (r_{z,n} \D))$ (equivalently, $\wt{\lightcone}(\theta) = f_{\zeta_{z,n-k}}(\lightcone(\theta))$ hits $\partial (r_{z,n} \D)$) and both $\wt{\eta}_{z,n-k}^L$, $\wt{\eta}_{z,n-k}^R$ do not hit $\partial (r_{z,n} \D)$ before hitting $\partial \D$, it follows that $\wt{\eta}_{z,n-k}^L$ and $\wt{\eta}_{z,n-k}^R$ do not intersect each other before hitting $\partial \D$.  Indeed, $\wt{\lightcone}(\theta)$ cannot enter into the region surrounded by $\wt{\eta}_{z,n-k}^L$ and $\wt{\eta}_{z,n-k}^R$, so if $\wt{\eta}_{z,n-k}^L$ and $\wt{\eta}_{z,n-k}^R$ did intersect then $\wt{\lightcone}(\theta)$ could not intersect $\partial (r_{z,n} \D)$.
}
\end{figure}

\begin{lemma}
\label{lem::bad_hit_small_ball}
For each $z \in \h$ and $k,n \in \N$ with $1 \leq k \leq n$, we let 
\[ E_{z,n}^{2,k} = \left\{ \zeta_{z,n} < \infty,\ \big( \cup_{q \in \{L,R\}}\wt{\eta}_q^{n-k}([0,\wt{\tau}_q^{n-k}]) \big) \cap ( r_{z,n} \D ) \neq \emptyset \right\}.\]
Let $\CS_n^{2,k}$ be the set of points $z \in \h$ such that $E_{z,n}^{2,k}$ occurs and let $\CS^{2,k} = \cup_{n=1}^\infty \cap_{m=n}^\infty \CS_m^{2,k}$.  There exists $k_0$ such that for every $k \geq k_0$ we have that $\CS^{2,k} = \emptyset$ almost surely.
\end{lemma}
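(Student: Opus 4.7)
The plan is to follow the dyadic covering and summability strategy used in the proof of Lemma~\ref{lem::bad_hit_angle}. Fix $\varsigma \in (0,1)$, let $r = -\log_2\varsigma + 2$, and let $U_\varsigma = (\varsigma^{-1}\D) \cap (\h + \varsigma i)$. For each $Q \in \CD_m$ with $Q \subseteq U_\varsigma$ set $E_{Q,m}^{2,k} = \cup_{z \in Q} E_{z,m}^{2,k}$, and let $\CV_m^{\varsigma,k}$ be the set of such $Q$ for which $\cap_{j=r}^m E_{Q,j}^{2,k}$ holds. It will suffice to show that for $k$ sufficiently large,
\[ \sum_{m=r}^\infty \E[|\CV_m^{\varsigma,k}|] < \infty, \]
from which Borel--Cantelli yields $\CS^{2,k} \cap U_\varsigma = \emptyset$ almost surely; letting $\varsigma \downarrow 0$ then gives the result.

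The key input is a single-scale bound of the form $\p[E_{z,n}^{2,k}] \lesssim 2^{-k\beta}\,\p[\zeta_{z,n-k}<\infty]$ for some fixed $\beta > 0$. To get this, I first restrict to the good-angle event of Lemma~\ref{lem::bad_hit_angle}; on this event, Koebe distortion together with the monotonicity of $\im f_t(z)$ in $t$ gives that, up to bounded multiplicative constants, $r_{z,n}/r_{z,n-k} \asymp 2^{-k}\,|f_{\zeta_{z,n}}'(z)|/|f_{\zeta_{z,n-k}}'(z)|$, and in particular the ratio is bounded above by a constant. In the $\wt{h}_{n-k}$ frame rescaled by $1/r_{z,n-k}$ the paths $\wt{\eta}_{z,n-k}^L,\wt{\eta}_{z,n-k}^R$ start at $\pm 1$ on the real line; by Lemma~\ref{lem::rn} their law in a neighborhood of the starting points is comparable to a standard $\SLE_\kappa(\underline{\rho})$ configuration with bounded boundary data, and a Schramm--Wilson type one-point estimate (analogous in spirit to Proposition~\ref{prop::derivative_estimate}) bounds the conditional probability of entering a ball of radius $\rho$ around the origin by $\rho^\gamma$ for some exponent $\gamma > 0$. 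Substituting and applying Proposition~\ref{prop::derivative_estimate} to the continuation of $\eta'$ in the $\wt h_{n-k}$ frame---with moment exponent $r$ chosen so that $\nu + r = \gamma$---the factor $\E[\one_{\zeta_{z,n}<\infty}(|f_{\zeta_{z,n}}'(z)|/|f_{\zeta_{z,n-k}}'(z)|)^\gamma\giv \CF_{\zeta_{z,n-k}}]$ is bounded by a constant times $2^{k(\xi+r)}$ on $\{\zeta_{z,n-k}<\infty\}$, leading to $\p[E_{z,n}^{2,k}] \lesssim 2^{-k(\gamma-\xi-r)}\p[\zeta_{z,n-k}<\infty] = 2^{-k(\nu-\xi)}\p[\zeta_{z,n-k}<\infty]$. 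Since $\nu - \xi = r(1+(r-2)\kappa'/8)$ is strictly positive once $r > 2 - 8/\kappa'$, a suitable choice of $r$ (equivalently, of the exponent $\gamma$) gives a positive $\beta$.

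The single-scale bound is then iterated at $N$-spaced scales by applying the strong Markov property of SLE at the stopping times $\zeta_{z,\,r + jN}$: after each application one uses Lemma~\ref{lem::rn} to compare the conditional law to a standard reference, obtaining $\p[\cap_{j=0}^J E_{z,\,r+jN}^{2,k}] \lesssim (C\cdot 2^{-k\beta})^J$. Harnack-type continuity of the probability of the relevant event as a function of $z$, as in the proof of Lemma~\ref{lem::bad_hit_angle}, allows one to pass from a single point $z$ to the whole square $Q \in \CD_m$ containing $z$. Choosing $k$ large enough that $C \cdot 2^{-k\beta} < 4^{-N}$ then makes $\sum_m 2^{2m}(C\cdot 2^{-k\beta})^{(m-r)/N}$ finite, completing the Borel--Cantelli argument. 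The main obstacle is justifying the approximate conditional independence used in the iteration: the paths $\wt\eta_{z,n-k}^{L/R}$ for different $n$ are all flow lines of the same underlying GFF and are coupled through the imaginary geometry, so invoking the Markov property requires a uniform comparison (via Lemma~\ref{lem::rn}) across the rescaled frames, and the good-angle restriction from Lemma~\ref{lem::bad_hit_angle} must be maintained simultaneously across the scales $r, r+N, r+2N, \ldots, m$ via a further union bound.
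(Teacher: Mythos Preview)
Your overall dyadic-covering framework is the right one, and matches the paper's, but the heart of the argument---the single-scale bound and its iteration---has a genuine gap.

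The main problem is the approximate conditional independence you need for the iteration. The events $E_{z,n}^{2,k}$ are defined in terms of the flow lines $\wt{\eta}^{\,n-k}_{L},\wt{\eta}^{\,n-k}_{R}$, which in each frame run from $\pm r_{z,n-k}$ all the way out to $\partial\D$; they are \emph{not} localized to an annulus. Pulled back to the original domain, these paths overlap across many scales, so conditioning on the flow lines and on $\eta'$ up to one scale genuinely changes the law of the GFF (hence of the flow lines) at the next scale in a way that Lemma~\ref{lem::rn} cannot absorb: that lemma requires the relevant paths at different scales to be confined to regions at uniformly positive distance from one another. Your invocation of ``the strong Markov property of SLE at the stopping times $\zeta_{z,r+jN}$'' only gives the Markov property for $\eta'$; it does nothing to decouple the auxiliary flow lines, which are all flow lines of the \emph{same} GFF and are precisely the objects whose behaviour defines $E_{z,n}^{2,k}$.

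The paper resolves this by replacing $E_{z,n}^{2,k}$ with a localized surrogate. It introduces \emph{barrier} flow lines $\wt{\gamma}_n^{L},\wt{\gamma}_n^{R}$ with angles $\wt{\theta}_L,\wt{\theta}_R$ chosen so that they can hit the opposite half-line, started at $\pm c_1 2^{k/4} r_{z,n}$ and \emph{stopped} upon exiting the annulus between $\partial(r_{z,n}\D)$ and $\partial(c_1 2^{k/2} r_{z,n}\D)$. If both barriers separate the inner from the outer circle, then by the flow line interaction rules $\wt{\eta}^{n-k}_{L/R}$ cannot cross them and hence cannot reach $r_{z,n}\D$; so the ``barriers fail to separate'' event dominates $E_{z,n}^{2,k}$. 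The probability of failure tends to $0$ as $k\to\infty$ by a direct $\SLE_\kappa(\rho)$ argument (no moment computation needed). Crucially, Lemma~\ref{lem::image_contained}\eqref{it::contained2} shows that at $k$-spaced scales the corresponding annuli are disjoint in the original domain, so Lemma~\ref{lem::rn} now legitimately yields the conditional bound~$\p[(\wt{F}_{z,n+k}^k)^c\mid\CF_n]\le p(k)$, and the iteration proceeds exactly as in Lemma~\ref{lem::bad_hit_angle}.

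A secondary issue: your single-scale bound rests on a ``Schramm--Wilson type one-point estimate'' for the probability that the flow lines enter a small ball around the origin, together with a moment computation via Proposition~\ref{prop::derivative_estimate}. Neither step is available off the shelf here: the boundary data near $0$ in the $\wt h_{n-k}$ frame comes from the two sides of $\eta'$ and is not the standard configuration, and Proposition~\ref{prop::derivative_estimate} is stated for ordinary $\SLE_{\kappa'}$, not for the conditional process after revealing flow lines. Even if one could push these through, the random radius $r_{z,n}$ is measurable only at time $\zeta_{z,n}$ while the flow lines live at time $\zeta_{z,n-k}$, so the conditioning you sketch mixes future and past information in a way that needs care. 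The paper's barrier argument avoids all of this: no quantitative hitting exponent and no derivative moments are required---only that the separation probability tends to $1$.
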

See Figure~\ref{fig::avoid_small_ball} for an illustration of the setup of Lemma~\ref{lem::bad_hit_small_ball}.
\begin{proof}[Proof of Lemma~\ref{lem::bad_hit_small_ball}]
Fix $\delta > 0$ sufficiently small so that the statement of Lemma~\ref{lem::bad_hit_angle} holds.  Let $\wt{\CS}_n^{2,k}$ be the set of points such that $E_{z,n}^{2,k} \setminus E_{z,n}^{1,\delta}$ occurs and let $\wt{\CS}^{2,k} = \cup_{n=1}^\infty \cap_{m=n}^\infty \wt{\CS}_m^{2,k}$.  We are going to prove the lemma by showing that there exists $k_0 \in \N$ such that $k \geq k_0$ implies that $\wt{\CS}^{2,k} = \emptyset$ almost surely.  Let $\wt{\theta}_L \leq \tfrac{\theta}{2}$ be an angle such that a flow line of $\wt{h}_j$ starting from $\R_-$ can hit $\R_+$ and let $\wt{\theta}_R = -\wt{\theta}_L$ so that a flow line of $\wt{h}_j$ starting from $\R_+$ with angle $\wt{\theta}_R$ can hit $\R_-$.  For each $n \in \N$, we let $\wt{\gamma}_n^L$ (resp.\ $\wt{\gamma}_n^R$) be the flow line of $\wt{h}_n$ starting from $c_1 2^{k/4} r_{z,n}$ (resp.\ $-c_1 2^{k/4} r_{z,n}$) with angle $\wt{\theta}_L$ (resp.\ $\wt{\theta}_R)$ where $c_1$ is the constant from Lemma~\ref{lem::image_contained}.  Let $\wt{\tau}_{z,n}^L$ (resp.\ $\wt{\tau}_{z,n}^R$) be the first time that $\wt{\gamma}_n^L$ (resp.\ $\wt{\gamma}_n^R$) hits either $\partial B(0, r_{z,n})$ or $\partial B(0, c_1 2^{k/2} r_{z,n})$ and let $\wt{F}_{z,n}^k$ be the event that both $\wt{\gamma}_n^L([0,\wt{\tau}_{z,n}^L])$ and $\wt{\gamma}_n^R([0,\wt{\tau}_{z,n}^R])$ separate $B(0,r_{z,n})$ from $\partial B(0, c_1 2^{k/2} r_{z,n})$.  Note that $(E_{z,n}^{2,k})^c \subseteq \wt{F}_{z,n}^k$ because $\wt{\eta}_q^n$ cannot cross $\wt{\gamma}_n^q$ for $q \in \{L,R\}$.  

Let $\CF_n$ be the $\sigma$-algebra generated by $\eta'|_{[0,\zeta_{z,n}]}$ as well as the paths $\wt{\eta}_j^q|_{[0,\wt{\tau}_j^q]}$ for $1 \leq j \leq n$.  We next claim that there exists a function $p \colon \N \to [0,1]$ with $p(k) \downarrow 0$ as $k \to \infty$ such that
\begin{equation}
\label{eqn::extra_flowline_separate_bound}
\p[ \wt{F}_{z,n+k}^k \giv \CF_n] \one_{\{\zeta_{z,n} < \infty\}} \leq p(k).
\end{equation}
We are first going to explain why there exists a function $p_1 \colon \N \to [0,1]$ with $p_1(k) \downarrow 0$ as $k \to \infty$ such that
\begin{equation}
\label{eqn::extra_flowline_separate_bound_intermediate}
 \p[ \wt{F}_{z,n}^k] \leq p_1(k) \quad\text{for all}\quad n \in \N.
\end{equation}
We will then explain using the Radon-Nikodym derivative estimate Lemma~\ref{lem::rn} why~\eqref{eqn::extra_flowline_separate_bound} follows once we establish~\eqref{eqn::extra_flowline_separate_bound_intermediate}.  First of all, we note that the probability that $\wt{\gamma}_n^L$ hits $\R_+$ before hitting $\partial B(0, c_1 2^{k/2} r_{z,n})$ tends to $1$ as $k \to \infty$, the probability that it hits $B(0, r_{z,n})$ tends to $0$ as $k \to \infty$, and the analogous statements are likewise true with $\wt{\gamma}_n^R$ in place of $\wt{\gamma}_n^L$.  Indeed, this follow since the law of $\wt{\gamma}_n^L$ rescaled by $(c_1 2^{k/4} r_{z,n})^{-1}$ stopped upon hitting  $\partial \D$ is that of an $\SLE_\kappa(\rho_{1,L};\rho_{1,R},\rho_{2,R})$ process starting from $1$ with $\rho_{1,L}, \rho_{1,R} > -2$, $\rho_{1,R} + \rho_{2,R} \in (\tfrac{\kappa}{2}-4,\tfrac{\kappa}{2}-2)$, and with the force points located at $1^-$ and $0,1^+$, respectively.  This proves~\eqref{eqn::extra_flowline_separate_bound_intermediate}.  To extract~\eqref{eqn::extra_flowline_separate_bound} from~\eqref{eqn::extra_flowline_separate_bound_intermediate}, we note that part~\eqref{it::contained2} of Lemma~\ref{lem::image_contained} implies that the paths involved in the definition of $\wt{F}_{z,n+k}^k$ are disjoint and at a positive distance from those involved in the definition of $\wt{F}_{z,j}^k$ for all $j \leq n$.  Consequently, the claimed bound follows from Lemma~\ref{lem::rn}.  That there exists $k_0 \in \N$ such that $\CS^{2,k} = \emptyset$ almost surely for $k \geq k_0$ then follows from the same argument used to establish the corresponding result for $\CS^{1,\delta}$ in Lemma~\ref{lem::bad_hit_angle}.
\end{proof}

\begin{proof}[Proof of Proposition~\ref{prop::dimension_upper_bound}]
We begin by partitioning $\lightcone(\theta)$ as follows.  For each $\delta >0$, let $\lightcone^\delta(\theta)$ consist of those $z \in \lightcone(\theta)$ such that for every $n \in \N$ there exists $m \geq n$ such that the event $E_\delta$ of Lemma~\ref{lem::non_intersection_exponent} occurs for the pair of paths $(\wt{\eta}_{z,m}^L, \wt{\eta}_{z,m}^R)$.  It follows from Lemma~\ref{lem::bad_hit_small_ball} and the argument described in Figure~\ref{fig::avoid_small_ball} that $\lightcone(\theta) = \cup_{\delta > 0} \lightcone^\delta(\theta)$.  Moreover, note that $0 < \delta_1 < \delta_2$ implies that $\lightcone^{\delta_2}(\theta) \subseteq \lightcone^{\delta_1}(\theta)$.  Consequently, it suffices to show that there exists $\ol{\delta} > 0$ such that the desired upper bound for $\dimH(\lightcone^\delta(\theta))$ holds for each $\delta \in (0,\ol{\delta})$.  We are going to set the value of $\ol{\delta} > 0$ in the proof.  We begin by assuming that $\ol{\delta} \in (0,\delta_0)$ where $\delta_0 > 0$ is the constant from Lemma~\ref{lem::bad_hit_angle}.

Fix $\delta \in (0,\ol{\delta})$, $\varsigma \in (0,1)$, and let $U_\varsigma = (\varsigma^{-1} \D) \cap (\h + \varsigma i)$.  For each $n \in \N$, we are now going to construct a cover of $\lightcone^\delta(\theta) \cap U_\varsigma$ consisting of squares in $\cup_{m \geq n} \CD_m$.  Let $\CU_m^{\varsigma,\delta}$ be the set of squares in $Q \in \CD_m$ which are contained in $U_\varsigma$ such that the following hold:
\begin{enumerate}[(i)]
\item $\eta'$ hits $\wt{Q}=\wt{Q}_m(Q)$, say for the first time at time $\tau_{\wt{Q}}$,
\item $\Theta_{\tau_{\wt{Q}}}^{z(Q)} \in (\delta,\pi-\delta)$,
\item The event $E_\delta$ of Lemma~\ref{lem::non_intersection_exponent} defined in terms of the paths $\wt{\eta}_{z,m}^L$ and $\wt{\eta}_{z,m}^R$ occurs.
\end{enumerate}
For each $n \in \N$, we let $\CV_n^{\varsigma,\delta} = \cup_{m \geq n} \CU_m^{\varsigma,\delta}$.  To complete the proof, we need to show that~$\CV_n^{\varsigma,\delta}$ is a cover of $\lightcone^\delta(\theta) \cap U_\varsigma$ and then get a bound on the expected number of squares in $\CV_n^{\varsigma,\delta}$.

Fix $z \in \lightcone^\delta(\theta) \cap U_\varsigma$.  Since $\lightcone^\delta(\theta)$ is contained in the range of $\eta'$, it follows that $\zeta_{z,n} < \infty$ for all $n \in \N$.  For each $j \in \N$, let $Q_j \in \CD_j$ be the square which contains $z$ and let $\wt{Q}_j = \wt{Q}(Q_j)$.  It follows from Lemma~\ref{lem::bad_hit_angle} and Lemma~\ref{lem::bad_hit_small_ball}, possibly by decreasing the value of $\ol{\delta} > 0$, that there exists a sequence $(j_\ell)$ in $\N$ tending to $\infty$ such that $\Theta_{\tau_{\wt{Q}_{j_\ell}}}^{z} \in (\delta, \pi-\delta)$, $\wt{\eta}_{j_\ell}^L$ and $\wt{\eta}_{j_\ell}^R$ do not hit $B(0,\delta r_{j_\ell})$ for all $\ell \in \N$.  Therefore $Q_{j_\ell} \in \CV_n^{\varsigma,\delta}$ for all $\ell \in \N$ so that $j_\ell \geq n$, hence $\CV_n^{\varsigma,\delta}$ is a cover of $\lightcone^\delta(\theta) \cap U_\varsigma$, as desired.

We are now going to estimate $\p[Q \in \CU_n^{\varsigma,\delta}]$ for a given square $Q \in \CD_n$ which is contained in $U_\varsigma$.  Take $\theta_1=\tfrac{\theta}{2}$, $\theta_2 = -\tfrac{\theta}{2}$, $a = \lambda'$, and $b = -\lambda'$.  The exponent $\alpha$ from~\eqref{eqn::non_intersection_exponent} of Lemma~\ref{lem::non_intersection_exponent_general} corresponding to these parameters is given by
\begin{align*}
\alpha
&= \frac{1}{\kappa} \left(1-\frac{\kappa}{4}\right)\left(1-\ol{\theta}\right) \left(\kappa(1+\ol{\theta}) - 4 \ol{\theta} \right) \quad\text{where}\quad \ol{\theta} = \frac{\theta}{\pi}.
\end{align*}
Therefore
\begin{equation}
\label{eqn::prob_decomp_sle}
 \p[Q \in \CU_n^{\varsigma,\delta}] \asymp \E\big[ (|f_{\zeta_{z,n}}'(z)| r_{z,n})^\alpha \one_{\{\zeta_{z,n} < \infty\}} \big] = r_{z,n}^\alpha \E\big[ |f_{\zeta_{z,n}}'(z)|^\alpha \one_{\{\zeta_{z,n} < \infty\}} \big]
\end{equation}
where the constants in $\asymp$ depend only on $\delta$, $\kappa$, and $\theta$.
Recall that $\kappa'=\tfrac{16}{\kappa}$.  Set
\begin{equation}
\label{eqn::r_choice}
 r = -\left(\frac{4+4\ol{\theta}}{\kappa'}\right) + \ol{\theta} < \frac{1}{2} - \frac{4}{\kappa'}
 \end{equation}
so that 
\begin{equation}
\label{eqn::nu_choice}
 \nu(r)+r  = \alpha \quad\text{and}\quad  -\xi(r) - r = \frac{2}{\kappa'} - \frac{8}{\kappa'}\left((\kappa'-4)\ol{\theta} \right)^2.
\end{equation}
With this choice of $r$, we can apply Proposition~\ref{prop::derivative_estimate} and this leads to an exponent for $\p[Q \in \CU_n^{\varsigma,\delta}]$ given by
\begin{align*}
 \beta = \beta(\kappa,\theta) =&(\nu(r) +r) -(\xi(r)+r) = \nu(r)-\xi(r)\\
  =& \frac{\left(4 + (4-\kappa')\ol{\theta} \right) \left(2(\kappa'-2) + (4-\kappa')\ol{\theta}\right)}{8\kappa'}
\end{align*}
That is, 
\[ \p[ Q \in \CU_n^{\varsigma,\delta}] \asymp 2^{-n \beta}\]
where the constants in $\asymp$ depend only on $\theta$, $\delta$, and $\kappa$.  By making the substitution $\kappa'=\tfrac{16}{\kappa}$, we note that $2-\beta = d(\kappa,\theta)$.   Fix $\xi > 0$.  By performing a union bound over $\{Q \in \CU_m^{\varsigma,\delta}\}$ for $m \geq n$, we consequently have that
\begin{align*}
    \E[\CH_{d(\kappa,\theta)+\xi}(\lightcone^\delta(\theta) \cap U_\varsigma)]
&\lesssim \sum_{m=n}^\infty 2^{2m}  \times 2^{-m (d(\kappa,\theta)+\xi)} \times 2^{-m \beta}\\
&\asymp \sum_{m=n}^\infty 2^{-\xi m} < \infty
\end{align*}
where the constants in $\lesssim$ and $\asymp$ depend only on $\varsigma$, $\delta$, $\kappa$, and $\theta$.  Taking a limit as $n \to \infty$ implies that $\CH_{d(\kappa,\theta)+\xi}^\delta(\lightcone^\delta(\theta) \cap U_\varsigma) = 0$ almost surely.  Since $\varsigma \in (0,1)$ was arbitrary, we therefore have that $\dimH(\lightcone^\delta(\theta)) \leq d(\kappa,\theta)+\xi$ almost surely.  The result follows since $\delta,\xi > 0$ were arbitrary.
\end{proof}

Now that we have proved Proposition~\ref{prop::dimension_upper_bound}, hence the upper bounds of Theorem~\ref{thm::lightcone_dimension} and Theorem~\ref{thm::sle_kappa_rho_dimension}, we turn to complete the proof of Theorem~\ref{thm::fan_dimension}.

\begin{proof}[Proof of Theorem~\ref{thm::fan_dimension}]
Fix $\theta \in [0,\pi]$.  Suppose that $h$ is a GFF on $\h$ with piecewise constant boundary data which changes values at most a finite number of times and let $\fan(\theta)$ be the fan of $h$ with opening angle $\theta$ starting from $0$.  For each $\theta_1 \leq \theta_2$, we let $\lightcone(\theta_1,\theta_2)$ be the closure of the set of points accessible by angle-varying flow lines starting from $0$ with rational angles contained in $[\theta_1,\theta_2]$ and which change angles a finite number of times and only at positive rational times.  Using this notation, $\lightcone(\theta) = \lightcone(-\tfrac{\theta}{2},\tfrac{\theta}{2})$.  By Proposition~\ref{prop::dimension_upper_bound}, the dimension $\lightcone(\theta_1,\theta_2)$ is at most $d(\kappa,\theta_2-\theta_1)$.  Note that, as $\theta_2 - \theta_1$ decreases to $0$, $d(\kappa,\theta_2-\theta_1)$ decreases to $1+\tfrac{\kappa}{8}$, the dimension of ordinary $\SLE_\kappa$.  For each $\epsilon > 0$, we have that $\fan(\theta)$ is contained in the finite union $\cup_{j=0}^{\lceil \theta/\epsilon \rceil}\lightcone(-\tfrac{\theta}{2}+j \epsilon,-\tfrac{\theta}{2} + (j+1)\epsilon)$ of light cones.  By Proposition~\ref{prop::dimension_upper_bound}, the Hausdorff dimension of each of these light cones is almost surely at most $d(\kappa,\epsilon)$.  Therefore $\dimH(\fan(\theta)) \leq d(\kappa,\epsilon)$ almost surely.  Since this holds for each $\epsilon > 0$, $\dimH(\fan(\theta)) \leq d(\kappa,0) = 1+\tfrac{\kappa}{8}$ almost surely.  We also have that $\dimH(\fan(\theta)) \geq 1+\tfrac{\kappa}{8}$ almost surely since $\fan(\theta)$ contains the $0$ angle flow line of $h$ starting from $0$ which itself has dimension $1+\tfrac{\kappa}{8}$ almost surely by \cite{RS05,BEF_DIM}.
\end{proof}

\section{Lower bound}
\label{sec::lowerbound}

We are now going to finish the proof of Theorem~\ref{thm::sle_kappa_rho_dimension} by establishing the lower bound.  We will make use of a multi-scale refinement of the second moment method (see \cite{DPRZ01, HMP10, MSW_CLE_GASKET, MW_INTERSECTIONS,multifractal_spectrum,extreme_nesting} for similar applications of this technique).  In particular, we will introduce a special class of points --- so called ``perfect points'' --- which are contained in $\lightcone(\theta)$ whose correlation structure is easier to control than for general points in $\lightcone(\theta)$ and then get a lower bound for the dimension of this set of points.

\subsection{Definition of events}

\begin{figure}[ht!]
\begin{center}
\includegraphics[scale=0.85]{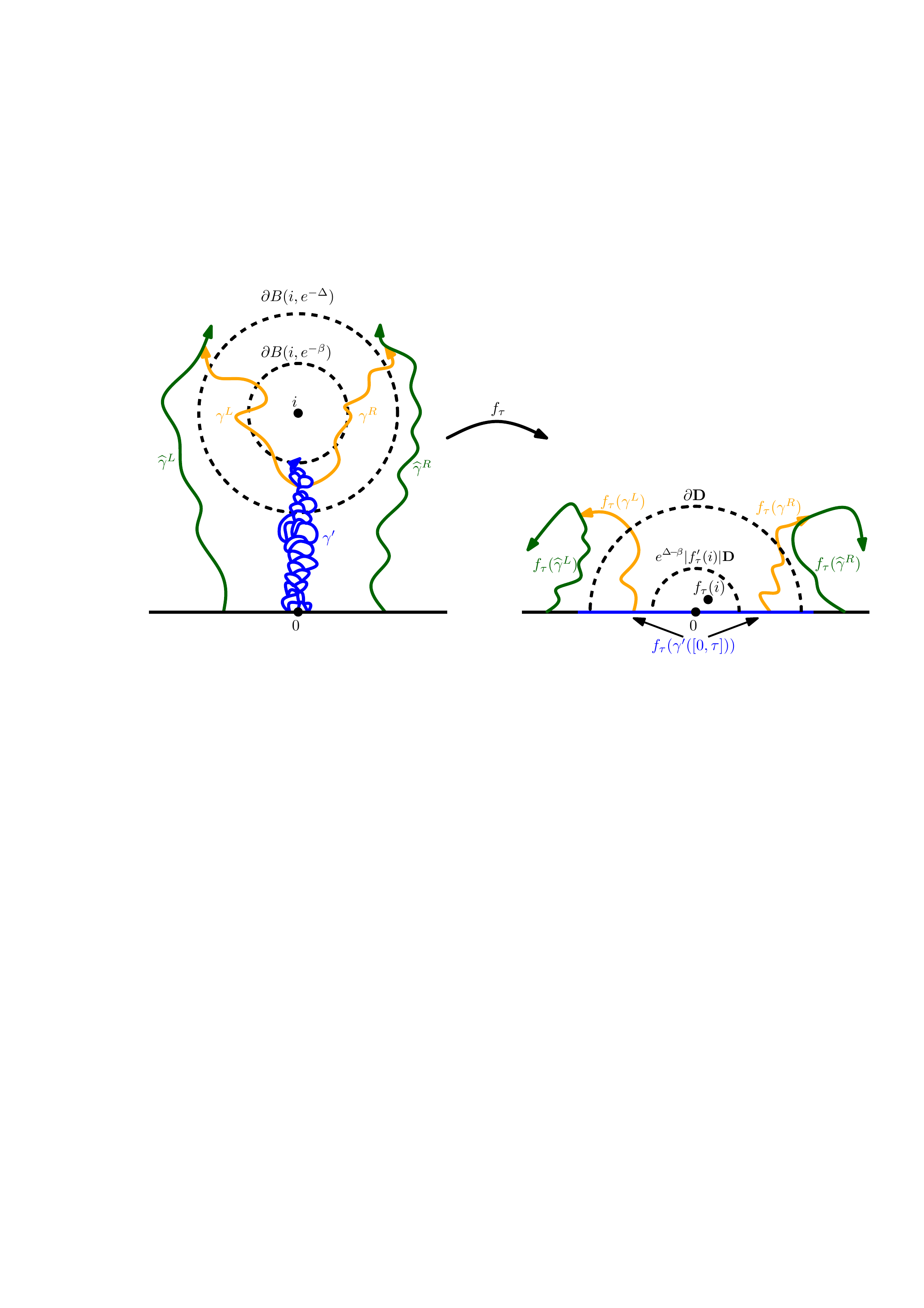}
\end{center}
\caption{\label{fig::perfect_definition} Illustration of the event $E_u^{\beta,\Delta}(\gamma',\gamma^L,\gamma^R,\wh{\gamma}^L,\wh{\gamma}^R)$ used to define a perfect point to establish the lower bound.  Here, $f_t = g_t - W_t$ where $(g_t)$ and $W$ are, respectively, the Loewner evolution and driving function of $\gamma'$ and $\tau$ is the first time that $\gamma'$ hits $\partial B(i,e^{-\beta})$.  In our particular application, $\gamma'$ will be a counterflow line hence self-intersecting and the other paths will be flow lines hence simple, as illustrated.}
\end{figure}

We will now work towards defining the perfect points.  See Figure~\ref{fig::perfect_definition} for an illustration of the event which is used to define a perfect point and which we will now describe.  Fix $u \in \partial \h \setminus \{0\}$ and $\beta > \Delta^2 > \Delta > 1$; we will eventually take a limit first as $\beta \to \infty$ and then as $\Delta \to \infty$.  Suppose that we have five non-crossing paths $\gamma'$, $\gamma^L$, $\wh{\gamma}^L$, $\gamma^R$, $\wh{\gamma}^R$ in $\ol{\h}$.  We assume that $\gamma'$ starts from $0$ and let $\tau$ be the first time that $\gamma'$ hits $\partial B(i,e^{-\beta})$.  We assume that $\gamma'$ admits a (chordal) Loewner evolution $(g_t)$ with continuous Loewner driving function $W$ and let $f_t = g_t - W_t$ be its centered Loewner evolution.  We also assume that $\gamma^L$ (resp.\ $\gamma^R$) starts on the left (resp.\ right) side of $\gamma'([0,\tau])$ and $\wh{\gamma}^L$ (resp.\ $\wh{\gamma}^R$) starts on $\R_-$ (resp.\ $\R_+$).  We then let $E_u^{\beta,\Delta}(\gamma',\gamma^L,\gamma^R,\wh{\gamma}^L,\wh{\gamma}^R)$ be the event that the following hold:

\begin{enumerate}[(i)]
\item $\gamma'$ hits $\partial B(i,e^{-\beta})$ and does so before hitting $\partial B(i,e^{\Delta})$.
\item\label{it::harmonic_measure_tip} The harmonic measure of the left (resp.\ right) side of $\gamma'([0,\tau])$ as seen from $i$ in $\h \setminus \gamma'([0,\tau])$ is at least $\tfrac{1}{2} - e^{-4\Delta}$.
\item For $q \in \{L,R\}$, let $\zeta^q$ (resp.\ $\wh{\zeta}^q$) be the first time that $f_\tau(\gamma^q)$ hits $\partial \D$ (resp.\ $\partial (\tfrac{1}{2} \D)$).  Then $\dist(f_\tau(\gamma^L([\wh{\zeta}^L,\zeta^L)), f_\tau(\gamma^R([\wh{\zeta}^R,\zeta^R))) \geq e^{-\Delta}$.
\item\label{it::paths_not_hit_small_ball} $\gamma^L$ and $\gamma^R$ intersect $\wh{\gamma}^L$ and $\wh{\gamma}^R$, respectively, before intersecting each other and also before leaving $B(i,e^{\Delta})$.  Moreover, $e^{\Delta-\beta} |f_\tau'(i)| \D$ is (completely) contained in the connected component of $\h \setminus f_\tau(\gamma^L \cup \gamma^R \cup \wh{\gamma}^L \cup \wh{\gamma}^R)$ which has $0$ on its boundary.
\end{enumerate}

Before we finish defining the perfect points of $\lightcone(\theta)$, we first record the following lemma.

\begin{lemma}
\label{lem::beta_ball_contained}
Suppose that we have the setup described just above.  There exists a constant $C_1 > 0$ such that the following is true.  On the event $\{\tau < \infty\}$, let $\varphi \colon \h \setminus \gamma'([0,\tau]) \to \h$ be the unique conformal transformation with $\varphi(\gamma'(\tau)) = 0$ and $\varphi(i) = i$.  For each $\varsigma \in (0,1)$ we have that $B(i,C_1 e^{(1-\varsigma) \beta}) \subseteq \varphi(B(i,e^{-\varsigma \beta/2}))$.
\end{lemma}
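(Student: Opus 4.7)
The plan is to argue via the inverse map $\psi=\varphi^{-1}\colon\h\to\h\setminus\gamma'([0,\tau])$, combining the distortion estimates of Section~\ref{subsec::conformal} with a transfer to the unit disk via the Cayley map, and to close the remaining range using Schwarz--Pick together with the Beurling estimate.

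First, I would establish that $|\varphi'(i)|\asymp e^{\beta}$. Since $\gamma'(\tau)\in\partial B(i,e^{-\beta})$ and $\gamma'([0,\tau])$ avoids $B(i,e^{-\beta})$ by the definition of $\tau$, the domain $D=\h\setminus\gamma'([0,\tau])$ satisfies $\dist(i,\partial D)=e^{-\beta}$. Together with $\dist(i,\partial\h)=1$ and $\varphi(i)=i$, Lemma~\ref{lem::distortion} applied to $\varphi$ and its inverse yields $|\varphi'(i)|\asymp e^{\beta}$, hence $|\psi'(i)|\asymp e^{-\beta}$, with universal multiplicative constants.

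Second, I would reformulate the claim as $\psi\big(B(i,C_{1}e^{(1-\varsigma)\beta})\cap\h\big)\subseteq B(i,e^{-\varsigma\beta/2})$ and transfer the problem to the unit disk via the Cayley map $F(z)=(z-i)/(z+i)$. The composition $g=F\circ\psi\circ F^{-1}$ is a univalent self-map of $\D$ with $g(0)=0$ and $|g'(0)|\asymp e^{-\beta}$, so the Koebe growth theorem (which is in the same family of distortion bounds as Lemma~\ref{lem::ball_size}) applied to $g/g'(0)$ gives
\[
|g(\zeta)|\;\le\;\frac{|g'(0)|\,|\zeta|}{(1-|\zeta|)^{2}}\qquad\text{for }|\zeta|<1.
\]
For a target point $w$ with $|w-i|\le C_{1}e^{(1-\varsigma)\beta}$ at a definite hyperbolic distance from $\R$, a direct computation shows that $1-|F(w)|$ is at least of order $e^{-(1-\varsigma/2)\beta/2}$. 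Plugging this into the Koebe bound and converting $|g(F(w))|$ back to $|\psi(w)-i|$ via $F^{-1}$ yields $|\psi(w)-i|\lesssim e^{-\varsigma\beta/2}$, provided that $C_{1}$ is chosen universally small.

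The main obstacle will be handling points $w$ in the large ball $B(i,C_{1}e^{(1-\varsigma)\beta})\cap\h$ whose imaginary part is very small, since the Koebe bound above degenerates as $|F(w)|\to1$. For such $w$ I would combine Schwarz--Pick for $\psi$, viewed as a holomorphic self-map of $\h$, with the Beurling estimate (Theorem~\ref{thm::beurling}): a Brownian motion in $\h$ started at such a $w$ exits through $\R$ with probability close to one, and the conformal invariance of Brownian motion then transports this exit into an exit of $\psi(\h)=D$ through a piece of $\partial D$ close to the slit tip $\gamma'(\tau)$. Since this tip is within Euclidean distance $e^{-\beta}$ of $i$, the image $\psi(w)$ is forced to lie in $B(i,e^{-\varsigma\beta/2})$ as well. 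Combining the interior Koebe argument with this boundary Beurling argument yields the claimed inclusion with a single universal constant $C_{1}$.
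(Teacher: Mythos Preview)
Your overall strategy---reformulate via the inverse $\psi=\varphi^{-1}$ and bound $|\psi(w)-i|$ pointwise using Koebe growth on the Cayley-conjugated map, then patch the boundary with a separate Beurling argument---is quite different from the paper's proof and, as written, does not go through.

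The paper's argument is a single harmonic-measure step applied to $\varphi$ rather than $\psi$: by the Beurling estimate, the probability that a Brownian motion started at $i$ reaches $\partial B(i,e^{-\varsigma\beta})$ before hitting $\partial D$ is $O(e^{-(1-\varsigma)\beta/2})$; by conformal invariance this equals the probability that a Brownian motion from $i$ in $\h$ reaches $\varphi(\partial B(i,e^{-\varsigma\beta})\cap D)$; and since the latter probability is $\gtrsim d^{-1}$ for $d=\dist(i,\varphi(\partial B(i,e^{-\varsigma\beta})\cap D))$, one gets $d\gtrsim e^{(1-\varsigma)\beta/2}$ directly. No splitting into ``interior'' and ``boundary'' points is needed, because the estimate is about where the \emph{image of the circle} lies, not about the behaviour of $\psi$ on individual points of a large ball.

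Your route has two concrete problems. First, the claimed bound $1-|F(w)|\gtrsim e^{-(1-\varsigma/2)\beta/2}$ is not correct: for $w$ on the imaginary axis with $|w-i|\asymp C_1 e^{(1-\varsigma)\beta}$ one has $1-|F(w)|\asymp e^{-(1-\varsigma)\beta}$, so the Koebe growth bound $|g(\zeta)|\le |g'(0)|\,|\zeta|/(1-|\zeta|)^2$ only yields $|g(F(w))|\lesssim e^{(1-2\varsigma)\beta}$, which is useless for small $\varsigma$. Second, and more seriously, the ``boundary Beurling'' step is not valid as stated: a Brownian motion from a point $w$ near $\R$ does exit $\h$ through $\R$ near $\mathrm{Re}(w)$, but under $\psi$ that boundary point is sent to some point of $\partial D=\R\cup\gamma'([0,\tau])$ with no a priori reason to be near the tip $\gamma'(\tau)$; it could land on $\R$ or low on the slit, far outside $B(i,e^{-\varsigma\beta/2})$. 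So the boundary case is a genuine gap, not just a technicality.

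In short, the natural object to estimate is the distance from $i$ to the $\varphi$-image of the circle, and that is exactly what a single Beurling/conformal-invariance argument gives. Reformulating as a pointwise bound on $\psi$ over a large half-ball forces you to control $\psi$ near $\partial\h$, which you cannot do with the tools you invoke.
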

\begin{proof}
Throughout, we shall suppose that $\{\tau < \infty\}$ occurs.  Fix $\varsigma \in (0,1)$.  The probability that a Brownian motion starting from $i$ hits $\partial B(i,e^{-\varsigma \beta})$ before hitting $\partial \h \cup \gamma'([0,\tau])$ is $O(e^{-(1-\varsigma) \beta/2})$ by the \hyperref[thm::beurling]{Beurling estimate}.  By the conformal invariance of Brownian motion, the probability of the event $X$ that a Brownian motion starting from $i$ exits $\varphi(B(i,e^{-\varsigma \beta}))$ in $\varphi(\partial B(i,e^{-\varsigma \beta}))$ is also $O(e^{-(1-\varsigma) \beta/2})$.  Let
\[ d = \dist(\varphi(\partial B(i,e^{-\varsigma \beta})),i).\]
We claim $\p[X]\gtrsim d^{-1}$.  Indeed, $X_1 \cap X_2 \subseteq X$ where $X_1$ is the event that the Brownian motion exits $\partial B(0,d)$ before hitting $\partial \h$ at a point with argument in $[\tfrac{\pi}{4},\tfrac{3\pi}{4}]$ and $X_2$ is the event that it hits $\varphi(\partial B(i,e^{-\varsigma \beta}))$ after hitting $\partial B(0,d)$ before hitting $\partial \h$.  It is easy to see that $\p[X_1] \gtrsim d^{-1}$ and $\p[X_2 \giv X_1]\gtrsim 1$.  Consequently, $e^{-(1-\varsigma) \beta/2}\gtrsim d^{-1}$ hence $d \gtrsim e^{(1-\varsigma) \beta/2}$, as desired.
\end{proof}

We now define the perfect points of $\lightcone(\theta)$ using these events as follows.  We suppose that $u_1 \in \partial \h \setminus \{0\}$ and that $h_1$ is a GFF on $\h$ with boundary data given by
\[ h_1|_{(-\infty,u_1]} \equiv -\lambda'+2\pi\chi,\quad h_1|_{(u_1,0]} \equiv \lambda',\quad\text{and}\quad h_1|_{(0,\infty)} \equiv -\lambda' \quad\text{if}\quad u_1 < 0\]
and
\[ h_1|_{(-\infty,0]} \equiv \lambda',\quad h_1|_{(0,u_1]} \equiv -\lambda',\quad\text{and}\quad h_1|_{(u_1,\infty)} \equiv \lambda'-2\pi\chi \quad\text{if}\quad u_1 > 0.\]
These two possibilities correspond to the type of boundary data which arises by starting with a GFF on $\h$ with boundary data as in Figure~\ref{fig::counterflowline_hit_conformal_maps} and then applying a conformal change of coordinates which takes a given point $z \in \h$ to $i$ and leaves $0$ fixed; $u_1$ should be thought of as the image of $\infty$ under such a map.

Let $\eta_1'$ be the counterflow line of $h_1$ starting from $0$ with associated Loewner evolution $(g_t^1)$, Loewner driving function $W^1$, and let $f_t^1 = g_t^1 - W_t^1$ be its centered Loewner evolution.  Let $\tau_1$ be the first time that $\eta_1'$ hits $\partial B(i,e^{-\beta})$.  On $\{\tau_1 < \infty\}$, we let $\wt{h}_1 = h_1 \circ (f_{\tau_1}^1)^{-1} - \chi \arg ((f_{\tau_1}^1)^{-1})'$ and let $\wt{\eta}_1^L$ (resp.\ $\wt{\eta}_1^R$) be the flow line of $\wt{h}_1$ starting from $-e^{2\Delta-\beta} |f_{\tau_1}'(i)|$ (resp.\ $e^{2\Delta-\beta} |f_{\tau_1}'(i)|$) with angle $\tfrac{\theta}{2}$ (resp.\ $-\tfrac{\theta}{2}$).  We take $\eta_1^L = (f_{\tau_1}^1)^{-1}(\wt{\eta}_1^L)$ (resp.\ $\eta_1^R = (f_{\tau_1}^1)^{-1}(\wt{\eta}_1^R)$) and let $E_1 = E_{u_1}^{\beta,\Delta}(\eta_1',\eta_1^L,\eta_1^R,\R_-,\R_+)$.  Let $\tau_1^L$ (resp.\ $\tau_1^R$) be the first time that $\eta_1^L$ (resp.\ $\eta_1^R$) hits $\R_-$, $\partial f_{\tau_1}^{-1}( e^{\Delta-\beta} |f_{\tau_1}'(i)| \D)$, or $\partial B(i,e^\Delta)$ (resp.\ $\R_+$, $\partial f_{\tau_1}^{-1}( e^{\Delta-\beta} |f_{\tau_1}'(i)| \D)$, or $\partial B(i,e^\Delta)$).  Finally, we let $\varphi_1$ be the unique conformal transformation from the connected component of $\h \setminus \eta_1'([0,\tau_1])$ which contains $i$ to $\h$ with $\varphi_1(i) = i$ and $\varphi_1(\eta'(\tau_1)) = 0$.

Suppose that $k \in \N$ and that, for each $1 \leq j \leq k$, paths $\eta_j'$, $\eta_j^L$, $\eta_j^R$, $\wt{\eta}_j^L$, and $\wt{\eta}_j^R$, Loewner evolutions $(g_t^j)$ with driving functions $W^j$, centered Loewner evolutions $f_t^j = g_t^j - W_t^j$, conformal maps $\varphi_j$, stopping times $\tau_j$, $\tau_j^L$, $\tau_j^R$, GFFs $h_j$, points $u_j \in \partial \h \setminus \{0\}$, and events $E_j$ have been defined.  We then take $h_{k+1} = h_k \circ \varphi_k^{-1} - \chi \arg(\varphi_k^{-1})'$, $u_{k+1} = \varphi_{k}(u_k)$, let $\eta_{k+1}'$ be the counterflow line of $h_{k+1}$ starting from $0$, $(g_t^{k+1})$ its Loewner evolution, $W^{k+1}$ its Loewner driving function, $f_t^{k+1} = g_t^{k+1} - W_t^{k+1}$ its centered Loewner evolution, and let $\tau_{k+1}$ be the first time that $\eta_{k+1}'$ hits $\partial B(i,e^{-\beta})$.  We define $\eta_{k+1}^L$, $\eta_{k+1}^R$, and $\varphi_{k+1}$ analogously to $\eta_1^L$, $\eta_1^R$, and $\varphi_1$, respectively, and we take $\wh{\eta}_{k+1}^L = \varphi_k(\eta_k^L)$ and $\wh{\eta}_{k+1}^R = \varphi_k(\eta_k^R)$.  We let $\tau_{k+1}^L$ (resp.\ $\tau_{k+1}^R$) be the first time that $\eta_{k+1}^L$ (resp.\ $\eta_{k+1}^R$) hits $\wh{\eta}_{k+1}^L$, $f_{\tau_{k+1}}^{-1}( e^{\Delta-\beta} |f_{\tau_{k+1}}'(i)| \D)$, or $\partial B(i,e^\Delta)$ (resp.\ $\wh{\eta}_{k+1}^R$, $f_{\tau_{k+1}}^{-1}( e^{\Delta-\beta} |f_{\tau_{k+1}}'(i)| \D)$, or $\partial B(i,e^\Delta)$).
We let 
\begin{align*}
E_{k+1} &=
\begin{cases}
E_{u_{k+1}}^{\beta,\Delta}(\eta_{k+1}',\eta_{k+1}^L,\eta_{k+1}^R,\wh{\eta}_{k+1}^L,\wh{\eta}_{k+1}^R) \quad&\text{on}\quad E_k,\\
E_{u_{k+1}}^{\beta,\Delta}(\eta_{k+1}',\eta_{k+1}^L,\eta_{k+1}^R,\R_-,\R_+) \quad&\text{on}\quad E_k^c.
\end{cases}
\end{align*}
We also let
\[ E^{m,n} = \cap_{j=m+1}^n E_j \quad \text{and} \quad E^n = E^{0,n}.\]

\begin{remark}
\label{rem::ej}
We note that:
\begin{enumerate}[(i)]
\item $E^{m,n}$ can occur even if only some of or perhaps none of $E_1,\ldots,E_m$ occur and
\item the conformal maps $\varphi_j$ and starting points of the paths $\eta_j^L$ and $\eta_j^R$ are measurable with respect to $\eta_1'$.
\end{enumerate}
\end{remark}

\begin{remark}
\label{rem::harmonic_measure_tip}
The reason for assumption~\eqref{it::harmonic_measure_tip} in the definition of the events $E_j$ is that it implies that $\wh{\eta}_{j+1}^L$ (resp.\ $\wh{\eta}_{j+1}^R$) starts in $\R_-$ (resp.\ $\R_+$) for large enough values of $\Delta > 1$.  Indeed, recall that $\wh{\eta}_{j+1}^L = \varphi_j(\eta_j^L) = \varphi_j((f_{\tau_j}^j)^{-1}(\wt{\eta}_j^L))$ and the starting point of $\wt{\eta}_j^L$ is given by $-e^{2\Delta-\beta}|f_{\tau_j}'(i)|$.  Assumption~\eqref{it::harmonic_measure_tip} implies that there exists a constant $c_1 > 0$ such that $\varphi_j \circ (f_{\tau_j}^j)^{-1}$ maps $[-c e^{4\Delta} \im( f_{\tau_j}(i)),0]$ into $\R_-$.  The claim for $\wh{\eta}_{j+1}^L$ follows since Lemma~\ref{lem::distortion} implies that $\im( f_{\tau_j}(i)) \asymp |f_{\tau_j}'(i)| e^{-\beta}$ and the claim for $\wh{\eta}_{j+1}^R$ is proved analogously.
\end{remark}

For $z \in \h$, we let $\psi_z \colon \h \to \h$ be the unique conformal map with $\psi_z(0) = 0$ and $\psi_z(z) = i$.  We define the events $E_n(z)$, $E^{m,n}(z)$, and $E^n(z)$ exactly in the same manner as $E_n$, $E^{m,n}$, and $E^n$ except in terms of the paths which arise after applying the change of coordinates $\psi_z$.  We similarly define paths $\eta_{z,k}'$, $\eta_{z,k}^L$, $\eta_{z,k}^R$, $\wh{\eta}_{z,k}^L$, $\wh{\eta}_{z,k}^R$, stopping times $\sigma_{z,k}$, $\tau_{z,k}$, $\tau_{z,k}^L$, $\tau_{z,k}^R$, and conformal maps $\varphi_{z,j}$. (In other words, everything defined as above except starting with the GFF $h_1 \circ \psi_z^{-1} - \chi \arg( \psi_z^{-1})'$ in place of $h_1$.)

\subsection{Estimates of probabilities}

We are now going to give the one and two point estimates for the perfect points and then complete the proof of the lower bound for Theorem~\ref{thm::lightcone_dimension} and Theorem~\ref{thm::sle_kappa_rho_dimension}.  Throughout, we let
\[ \alpha = 2-d(\kappa,\theta)\]
where $d(\kappa,\theta)$ is as in~\eqref{eqn::lightcone_dimension_formula}.  Recall from the proof of Proposition~\ref{prop::dimension_upper_bound} that this is the value of $\alpha$ from~\eqref{eqn::non_intersection_exponent} with the choice of parameters $a=\lambda'$, $b=-\lambda'$, $\theta_1 = \tfrac{\theta}{2}$, and $\theta_2 = -\tfrac{\theta}{2}$.  Since $\lightcone(\theta)$ is increasing in $\theta$, to prove the theorem we may assume without loss of generality that $\theta \in [0,\pi]$ is such that $d(\kappa,\theta) < 2$.

\begin{proposition}
\label{prop::perfect_one_point}
We have that 
\[ \p[E^n] \asymp e^{-\beta(1+o_{\beta}(1)) \alpha n}\]
where the $o_{\beta}(1)$ term tends to $0$ as $\beta \to \infty$.  Moreover, the rate at which the $o_{\beta}(1)$ term tends to $0$ as $\beta \to \infty$ and the constants in $\asymp$ depend only on $\kappa$, $\theta$, $\Delta$, and $u_1$.
\end{proposition}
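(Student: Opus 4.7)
The plan is to prove the estimate by induction on $n$, establishing a single-step bound
\[ \p[E_{j+1} \giv \CF_j] \one_{E^j} \asymp e^{-\beta \alpha (1+o_\beta(1))} \]
where $\CF_j$ is the $\sigma$-algebra generated by $\eta_i'$ and the associated flow lines $\eta_i^L, \eta_i^R$ for $1 \leq i \leq j$, and then multiplying $n$ such factors together by the tower property. The single-step bound will be obtained exactly as in the proof of Proposition~\ref{prop::dimension_upper_bound}: decompose $E_{j+1}$ as the intersection of (a) the event that $\eta_{j+1}'$ reaches $\partial B(i,e^{-\beta})$ with the harmonic-measure-at-the-tip condition~\eqref{it::harmonic_measure_tip}, and (b) the event that, conditional on (a), the flow lines $\eta_{j+1}^L$ and $\eta_{j+1}^R$ avoid each other and the small ball $e^{\Delta-\beta}|f_{\tau_{j+1}}'(i)|\D$ before escaping to macroscopic distance. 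Applying Proposition~\ref{prop::derivative_estimate} to (a) and Lemma~\ref{lem::non_intersection_exponent_general} to (b) with the parameters $a = \lambda'$, $b = -\lambda'$, $\theta_1 = \tfrac{\theta}{2}$, $\theta_2 = -\tfrac{\theta}{2}$ (the same exponent $\alpha$ as in the proof of Proposition~\ref{prop::dimension_upper_bound}), and combining as in~\eqref{eqn::prob_decomp_sle}--\eqref{eqn::nu_choice}, one obtains the desired factor $e^{-\beta \alpha(1+o_\beta(1))}$.

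The key subtlety is that at the $j$-th step, the GFF $h_{j+1}$ has boundary data determined by the history, and the counterflow line $\eta_{j+1}'$ is run in a GFF whose boundary data differs from the canonical configuration. However, by construction $h_{j+1}$ has piecewise-constant boundary data with a bounded number of pieces (one marked point $u_{j+1} = \varphi_j(u_j)$ away from $0$ and $\infty$), so Lemma~\ref{lem::rn} applies to compare the relevant probabilities to those for a canonical fixed-boundary configuration, with Radon--Nikodym derivative bounded uniformly by a constant depending only on $\Delta$ and $\kappa$. The harmonic-measure-at-the-tip condition~\eqref{it::harmonic_measure_tip} is crucial here: it ensures that, in the conformally-mapped coordinates via $\varphi_j$, the image $\wh{\eta}_{j+1}^L = \varphi_j(\eta_j^L)$ lands genuinely on $\R_-$ (see Remark~\ref{rem::harmonic_measure_tip}) and is separated by an order-one distance from $0$ and $u_{j+1}$ after rescaling, so that the hypotheses of Lemma~\ref{lem::non_intersection_exponent_general} apply uniformly in the history.

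For the upper bound, multiply the conditional estimates and take expectations using the tower property; for the lower bound, observe that the event $E_{j+1}$ is determined inside $B(i,e^\Delta)$, so after applying $\varphi_j$ (which sends a ball $B(i,e^{-\beta/2})$ of definite conformal radius into a macroscopic set by Lemma~\ref{lem::beta_ball_contained}), the successive one-step events are essentially independent up to bounded Radon--Nikodym factors. Iterating $n$ times gives $\p[E^n] \asymp e^{-\beta \alpha (1+o_\beta(1)) n}$.

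The main obstacle will be quantifying uniformity of the $o_\beta(1)$ term across all $n$ iterations. The issue is that the boundary data of $h_{j+1}$, while always piecewise constant with the same \emph{form}, has a marked point $u_{j+1}$ whose location depends on the history and could a priori accumulate near $0$, degrading the constants in Lemma~\ref{lem::rn}. The harmonic-measure condition~\eqref{it::harmonic_measure_tip} together with standard distortion estimates (Lemma~\ref{lem::distortion} and Lemma~\ref{lem::ball_size}) prevents this: after rescaling $\varphi_j$ so that the region where the next step takes place has size $1$, the marked point $u_{j+1}$ sits at distance at least a $\Delta$-dependent constant from the relevant geometry. Tracking this carefully shows that the single-step multiplicative error is of size $1+o_\beta(1)$ uniformly in the history, so the $n$-fold product accumulates at most a factor $(1+o_\beta(1))^n$ in the exponent, which is precisely what the statement allows.
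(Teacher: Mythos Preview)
Your approach is correct and is essentially the paper's proof. The paper packages it as two lemmas --- Lemma~\ref{lem::perfect_one_point} for the single-step estimate $\p[E_1] \asymp e^{-\beta\alpha(1+o_\beta(1))}$ and Lemma~\ref{lem::perfect_conditional} for the approximate conditional independence $\p[E^{m,n} \giv E^k, E_m] \asymp \p[E^{n-m}]$ --- and then iterates, which is exactly your inductive scheme with the conditioning step factored out.
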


The proof of Proposition~\ref{prop::perfect_one_point} has two inputs.  The first is the following lemma and the second is Lemma~\ref{lem::perfect_conditional}.

\begin{lemma}
\label{lem::perfect_one_point}
There exists $\Delta_0 > 1$ such that for all $\beta > \Delta^2 > \Delta > \Delta_0$ we have that 
\begin{equation}
\label{eqn::e1}
\p[E_1] \asymp e^{-\beta(1+ o_{\beta}(1)) \alpha}
\end{equation}
where the $o_{\beta}(1)$ term tends to $0$ as $\beta \to \infty$.  Moreover, the rate at which the $o_{\beta}(1)$ term tends to $0$ as $\beta \to \infty$ and the constants in $\asymp$ depend only on $\kappa$, $\theta$, $\Delta$, and $u_1$.
\end{lemma}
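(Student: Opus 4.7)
The plan is to adapt the two-step scheme from the proof of Proposition~\ref{prop::dimension_upper_bound}, combining the derivative moment estimate (Proposition~\ref{prop::derivative_estimate}) for the counterflow line $\eta_1'$ with the non-intersection estimate (Lemma~\ref{lem::non_intersection_exponent_general}) for the flow lines $\eta_1^L,\eta_1^R$ in the image under $f_{\tau_1}^1$. Since both of these estimates are two-sided, the same scheme produces both $\asymp$ bounds in $\p[E_1]\asymp e^{-\beta(1+o_\beta(1))\alpha}$ at once.

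First I would condition on $\CF_{\tau_1}$, the $\sigma$-algebra generated by $\eta_1'|_{[0,\tau_1]}$, and work inside the $\CF_{\tau_1}$-measurable good event $G$ consisting of $\{\tau_1<\infty\}$, items~(i)--(ii) in the definition of $E_1$, and a quantitative lower bound on $\sin(\Theta_{\tau_1}^i)$. On $G$, the Koebe distortion estimates (Lemma~\ref{lem::distortion} and Lemma~\ref{lem::beta_ball_contained}) yield $\im(f_{\tau_1}^1(i))\asymp |f_{\tau_1}'(i)|\,e^{-\beta}$ and $|V^L|,|V^R|\asymp |f_{\tau_1}^1(i)|$, with implicit constants controlled by the harmonic-measure splitting enforced by~(ii), where $V^L=f_{\tau_1}(0^-)$ and $V^R=f_{\tau_1}(0^+)$. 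After globally rescaling the image by $(e^{2\Delta-\beta}|f_{\tau_1}'(i)|)^{-1}$, the starting points of $\wt{\eta}_1^L$ and $\wt{\eta}_1^R$ land at $\pm\tfrac{1}{2}$; the force-point discrepancies between the rescaled image field and the canonical GFF setup of Lemma~\ref{lem::non_intersection_exponent_general} (with $\theta_1=\tfrac{\theta}{2}$, $\theta_2=-\tfrac{\theta}{2}$, $a=\lambda'$, $b=-\lambda'$) live in an interval of length $O(e^{-2\Delta})$ near $0$; and the images of the $\wh{\gamma}^{L}$, $\wh{\gamma}^R$ targets sit at distance of order $e^{4\Delta}$. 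Lemma~\ref{lem::non_intersection_exponent_general} combined with Lemma~\ref{lem::rn} (used to absorb the near-origin boundary discrepancy into $\Delta$-dependent constants) then gives that the conditional probability of items~(iii) and~(iv) in the definition of $E_1$ is $\asymp c_\Delta^{-1}\,(e^{-\beta}|f_{\tau_1}'(i)|)^{\alpha_{\mathrm{NI}}}$, where $\alpha_{\mathrm{NI}}$ is the non-intersection exponent of~\eqref{eqn::non_intersection_exponent} and $c_\Delta>0$ depends only on $\Delta$, $\kappa$, $\theta$, $u_1$.

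Second, I would integrate over $\CF_{\tau_1}$, reducing the task to estimating $\E[|f_{\tau_1}'(i)|^{\alpha_{\mathrm{NI}}}\one_G]$. Proposition~\ref{prop::derivative_estimate} applied at $z=i$ (and for $\eta_1'$ viewed as an $\SLE_{\kappa'}$ process) with the choice of $r$ satisfying $\nu(r)+r=\alpha_{\mathrm{NI}}$ gives $\E[|f_{\tau_1}'(i)|^{\alpha_{\mathrm{NI}}}\one_{\tau_1<\infty}]\asymp e^{\beta(\xi(r)+r)}$, and the parameter identity $\nu(r)-\xi(r)=2-d(\kappa,\theta)=\alpha$ is exactly the one already verified in the proof of Proposition~\ref{prop::dimension_upper_bound}. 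Multiplying the two contributions produces $\p[E_1\cap G]\asymp e^{-\beta\alpha}$ up to the $\Delta$-dependent constant, which is $e^{o_\beta(\beta)}$ as $\beta\to\infty$. To complete the lower bound I must show $\E[|f_{\tau_1}'(i)|^{\alpha_{\mathrm{NI}}}\one_G]$ is comparable to $\E[|f_{\tau_1}'(i)|^{\alpha_{\mathrm{NI}}}\one_{\tau_1<\infty}]$ up to a $\Delta$-dependent constant; this I will obtain from the radial $\SLE_{\kappa'}(\kappa'-6)$ description of $\eta_1'$ targeted at $i$ (as in Lemma~\ref{lem::bad_hit_angle}) together with standard Bessel-type estimates for $\Theta^i$, from which both the angular condition and the harmonic-measure splitting~(ii) hold with probability $\geq c(\Delta)>0$ uniformly in $\beta$.

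The main obstacle will be coordinating the Koebe distortion and Lemma~\ref{lem::rn} to replace the actual near-$0$ geometry of the image under $f_{\tau_1}^1$ by the canonical setup of Lemma~\ref{lem::non_intersection_exponent_general}. The harmonic-measure condition~(ii) at rate $e^{-4\Delta}$ is what ensures that the starting points $\pm e^{2\Delta-\beta}|f_{\tau_1}'(i)|$ sit in the correct relative position to the force points $V^L,V^R$; without it, the Radon--Nikodym ratio in Lemma~\ref{lem::rn} would blow up in a way that destroys the sharp exponent. This quantitative geometric replacement, together with the separate lower-bound argument for $\p[G\mid\tau_1<\infty]$ via the angular SDE, is the bulk of the work; the rest is a direct transcription of the exponent bookkeeping carried out in the proof of Proposition~\ref{prop::dimension_upper_bound}.
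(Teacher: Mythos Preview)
Your overall scheme is the same as the paper's: decompose $E_1$ into the counterflow-line part (items~(i)--(ii)) and the flow-line non-intersection part (items~(iii)--(iv)), apply Lemma~\ref{lem::non_intersection_exponent_general} together with Lemma~\ref{lem::rn} to the latter, and then invoke Proposition~\ref{prop::derivative_estimate} with the same exponent identity $\nu(r)-\xi(r)=\alpha$ used in Proposition~\ref{prop::dimension_upper_bound}. Two points in your write-up do not quite close, though.

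First, the lower bound on $\E[|f_{\tau_1}'(i)|^{\alpha_{\mathrm{NI}}}\one_G]$ cannot be obtained from the unweighted statement $\p[G\mid \tau_1<\infty]\geq c(\Delta)$: the random variable $|f_{\tau_1}'(i)|$ is correlated with $G$ (e.g.\ the containment condition $\eta_1'([0,\tau_1])\subseteq B(i,e^\Delta)$), so a positive conditional probability does not transfer to a comparable weighted expectation. The paper avoids this by using~\eqref{eqn::derivative_estimate_escape} in place of~\eqref{eqn::derivative_estimate}; that estimate is already two-sided on the event $\{\tau_\epsilon\leq\sigma_R\}$ and, via \cite[Lemma~4.2]{MW_INTERSECTIONS}, incorporates the angular/harmonic-measure restriction. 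Your radial $\SLE_{\kappa'}(\kappa'-6)$ idea can be made to work, but only if you argue under the $M_t$-tilted measure rather than the original one.

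Second, Lemma~\ref{lem::non_intersection_exponent_general} stops the flow lines at $\partial\D$, so it does not by itself produce item~(iv): you still need that $\wt{\eta}_1^L,\wt{\eta}_1^R$ subsequently reach $\R_-,\R_+$ (the images of $\wh{\gamma}^L,\wh{\gamma}^R$) before hitting each other or the small ball. The paper isolates this as a separate event $F_3$ and shows $\p[F_3\mid F_1,F_2]\geq p_1>0$ using \cite[Lemma~2.3, Lemma~2.5]{MW_INTERSECTIONS} together with the bound $|f_{\tau_1}(0^-)|+|f_{\tau_1}(0^+)|\leq Ce^{\Delta}$ on $F_1$. You should make this continuation step explicit rather than folding it into the application of Lemma~\ref{lem::non_intersection_exponent_general}.
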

\begin{proof}
Let $F_1$ be the event that $\{\tau_1 < \infty\}$, $\eta_1'([0,\tau_1]) \subseteq B(i,e^\Delta)$, and the harmonic measure of the left (resp.\ right) side of $\eta_1'([0,\tau_1])$ as seen from $i$ is at least $\tfrac{1}{2} - e^{-4\Delta}$.  By \cite[Equation~(3.8) of Lemma~3.4]{MW_INTERSECTIONS}, there exists a universal constant $C > 0$ such that
\begin{equation}
\label{eqn::0_plus_0_minus}
 |f_{\tau_1}(0^-)| + |f_{\tau_1}(0^+)| \leq C e^{\Delta} \quad\text{on}\quad F_1.
\end{equation}
For $q \in \{L,R\}$, let $\wt{\zeta}_1^q$ (resp.\ $\wh{\zeta}_1^q$) be the first time that $\wt{\eta}_1^q$ hits $\partial \D$ (resp.\ $\partial (\tfrac{1}{2} \D)$).  Let $F_2$ be the event that all of the following hold:
\begin{enumerate}[(i)]
\item $\wt{\eta}_1^L([0,\wt{\zeta}_1^L]) \cap \wt{\eta}_1^R([0,\wt{\zeta}_1^R]) = \emptyset$,
\item $\dist(\wt{\eta}_1^L([\wh{\zeta}_1^L,\wt{\zeta}_1^L]), \wt{\eta}_1^R([\wh{\zeta}_1^R,\wt{\zeta}_1^R])) \geq e^{-\Delta}$, and
\item $\wt{\eta}_1^q([0,\wt{\zeta}_1^q]) \cap \big( e^{\Delta-\beta} |f_{\tau_1}'(i)| \D \big) = \emptyset$ for $q \in \{L,R\}$.
\end{enumerate}
Lemma~\ref{lem::non_intersection_exponent_general} together with Lemma~\ref{lem::rn} implies that 
\[ \p[ F_2 \giv F_1, |f_{\tau_1}'(i)|] \asymp \big( e^{-\beta} |f_{\tau_1}'(i)| \big)^\alpha\]
where the constants in $\asymp$ depend only on $\kappa$, $\theta$, $\Delta$, and $u_1$.  Combining, we have that
\begin{equation}
\label{eqn::f1_given_f2_prob}
 \p[F_1 \cap F_2] \asymp e^{-\beta \alpha} \E[ |f_{\tau_1}'(i)|^\alpha \one_{F_1}].
\end{equation}
Applying Proposition~\ref{prop::derivative_estimate} as in the proof of Proposition~\ref{prop::dimension_upper_bound} with the value of $r$ as in~\eqref{eqn::r_choice} (except we use~\eqref{eqn::derivative_estimate_escape} in place of~\eqref{eqn::derivative_estimate}) we see that
\begin{equation}
\label{eqn::f1_f2_prob}
 \p[ F_1 \cap F_2] \asymp e^{-\beta(1+o_{\beta}(1))\alpha}
\end{equation}
where the rate at which the $o_\beta(1)$ term tends to zero as $\beta \to \infty$ and the constants in $\asymp$ depend only on $\kappa$, $\theta$, $\Delta$, and $u_1$.  Let $F_3$ be the event that $\wt{\eta}_1^L|_{[\wt{\zeta}_1^L,\infty)}$ (resp.\ $\wt{\eta}_1^R|_{[\wt{\zeta}_1^R,\infty)}$) hits the left (resp.\ right) component of $\R \setminus f_{\tau_1}(\eta'([0,\tau_1]))$ before intersecting $\wt{\eta}_1^R$ (resp.\ $\wt{\eta}_1^L$) and before intersecting $e^{\Delta-\beta}|f_{\tau_1}'(i)| \D$.  Then~\eqref{eqn::0_plus_0_minus}, \cite[Lemma~2.3]{MW_INTERSECTIONS}, and \cite[Lemma~2.5]{MW_INTERSECTIONS} together imply that there exists a constant $p_1 > 0$ depending only on $\kappa$, $\theta$, $\Delta$, and $u_1$ such that 
\[ \p[ F_3 \giv F_1,F_2] \geq p_1.\]
This proves the lemma since $E_1 = \cap_{i=1}^3 F_i$.
\end{proof}

\begin{figure}[ht!]
\begin{center}
\includegraphics[scale=0.85]{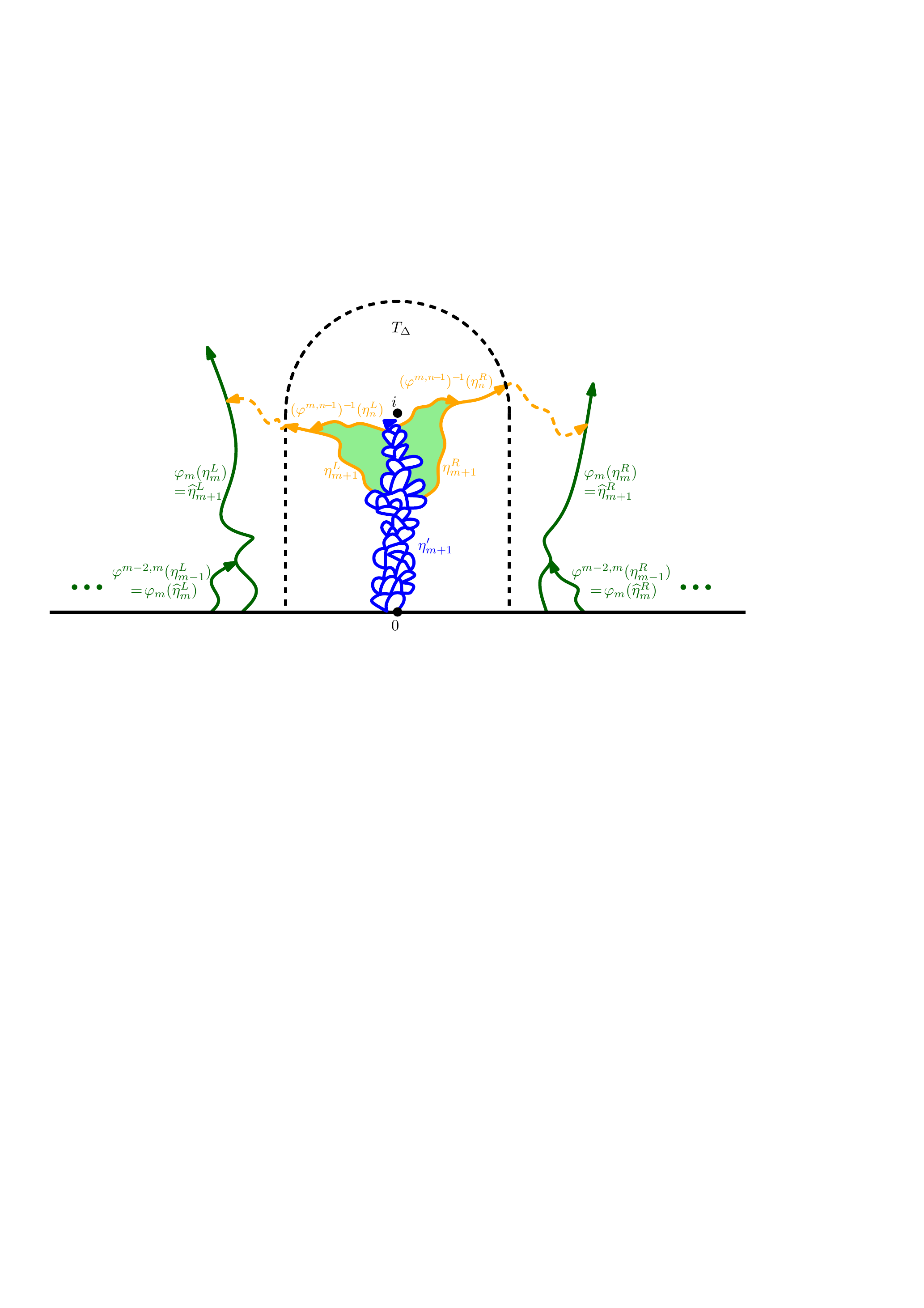}
\end{center}
\caption{\label{fig::perfect_conditional} {\small Setup for the proof of Lemma~\ref{lem::perfect_conditional}, which gives the approximate independence of the events $E^m$ and $E^{m,n}$.  Recall that $\varphi^{j,k} = \varphi^{j+1} \circ \cdots \circ \varphi^k$.  Let $T_\Delta$ be the $e^{-\Delta}$-neighborhood of $[0,i]$.  On the event that $\eta_{m+1}^L$ and $(\varphi^{m,n-1})^{-1}(\eta_n^L)$ merge as shown, the paths $(\varphi^{m,m+1})^{-1}(\eta_{m+2}^L),\ldots,(\varphi^{m,n-2})^{-1}(\eta_{n-1}^L)$ stopped upon merging with either $\eta_{m+1}^L$ or $(\varphi^{m,n-1})^{-1}(\eta_n^L)$ are contained in the light green region which is to the left of $\eta_{m+1}'$.  The analogous statement holds for the paths $(\varphi^{m,m+1})^{-1}(\eta_{m+2}^R),\ldots,(\varphi^{m,n-2})^{-1}(\eta_{n-1}^R)$.}}
\end{figure}

Let
\begin{equation}
\label{eqn::phi_iterate}
\varphi^{j,k} = \varphi_{j+1} \circ \cdots \circ \varphi_{k} \quad\text{for}\quad j < k \quad\text{and}\quad \varphi^k = \varphi^{0,k}.
\end{equation}

\begin{lemma}
\label{lem::perfect_conditional}
There exists $\Delta_0 > 1$ such that for all $\beta > \Delta^2 > \Delta > \Delta_0$ we have that
\[\p[E^{m,n} \giv E^k, E_m] \asymp \p[E^{n-m}] \quad\text{for}\quad 1 \leq k \leq m \leq n-1\]
where the constants in $\asymp$ depend only on $\kappa$, $\theta$, $\Delta$, and $u_1$.
\end{lemma}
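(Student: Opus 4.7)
The plan is to use the domain Markov property of the $\SLE_{\kappa'}$/GFF coupling in combination with the Radon--Nikodym estimate Lemma~\ref{lem::rn} to identify the conditional law of the tail of the construction with a standardized, fresh iteration. Concretely, I would first condition on the larger $\sigma$-algebra $\CF_m$ generated by all of the paths $\eta_j',\eta_j^L,\eta_j^R$ for $j\le m$; since $E^k\cap E_m\in\CF_m$, it suffices by the tower property to show
\[
\p[E^{m,n}\giv\CF_m]\one_{E^k\cap E_m}\asymp \p[E^{n-m}]\one_{E^k\cap E_m}
\]
with constants depending only on $\kappa,\theta,\Delta,u_1$.

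On $E^k\cap E_m$ the composition $\varphi^m$ is well defined, and pushing the subsequent construction forward by $\varphi^m$ puts us back in the standard half-plane picture. Here the Markov property gives that, conditionally on $\CF_m$, the law of $(\eta_{m+j}',\eta_{m+j}^L,\eta_{m+j}^R)_{j=1}^{n-m}$ is that of a fresh iteration of the construction starting from the GFF $h_{m+1}=h_m\circ\varphi_m^{-1}-\chi\arg(\varphi_m^{-1})'$ on $\h$, with outer paths $\wh{\eta}_{m+1}^L=\varphi_m(\eta_m^L)$ and $\wh{\eta}_{m+1}^R=\varphi_m(\eta_m^R)$ in place of $\R_-$ and $\R_+$. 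The two differences with the construction defining $\p[E^{n-m}]$ are therefore: (a) the jump location of the boundary data has moved from $u_1$ to $u_{m+1}=\varphi^m(u_1)$; and (b) the outer paths at the first step of the tail are random rather than the real line. For (b), condition~\eqref{it::harmonic_measure_tip} in the definition of $E_m$ together with Remark~\ref{rem::harmonic_measure_tip} and condition~\eqref{it::paths_not_hit_small_ball} ensure that $\wh{\eta}_{m+1}^L$ (resp.\ $\wh{\eta}_{m+1}^R$) actually starts on $\R_-$ (resp.\ $\R_+$) at a bounded distance from $0$ and then exits $B(0,e^{\Delta-\beta}|f_{\tau_{m+1}}'(i)|)$ before reaching the region relevant for $E_{m+1}$. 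This places the new configuration in exactly the setup needed for Lemma~\ref{lem::rn} within a bounded neighbourhood of the seed, giving a two-sided bound on the Radon--Nikodym derivative against the standardized law with uniform constants.

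The main obstacle is handling (a): the point $u_{m+1}=\varphi^m(u_1)$ can, a priori, sit anywhere in $\partial\h\setminus\{0\}$, including arbitrarily close to $0$, so Lemma~\ref{lem::rn} does not apply directly. The way I would address this is by showing, using the harmonic measure condition~\eqref{it::harmonic_measure_tip} iteratively and the Beurling estimate (Theorem~\ref{thm::beurling}) together with the conformal distortion estimate (Lemma~\ref{lem::distortion}), that on $E^k\cap E_m$ the point $u_{m+1}$ lies in a compact subset of $\partial\h\setminus\{0\}$ depending only on $\Delta,u_1$. Once this is in hand, Lemma~\ref{lem::rn} again gives a Radon--Nikodym comparison between the law of the tail iteration under $h_{m+1}$ and under $h_1$, restricted to the $\sigma$-algebra generated by the portions of the paths inside the relevant bounded regions (the successive $B(i,e^\Delta)$ neighbourhoods). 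Combining the comparisons from (a) and (b) absorbs all the discrepancies into a single constant depending only on $\kappa,\theta,\Delta,u_1$, and the equivalence $\p[E^{m,n}\giv\CF_m]\asymp \p[E^{n-m}]$ follows. Integrating against $\one_{E^k\cap E_m}$ yields the statement of the lemma.
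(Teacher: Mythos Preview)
Your overall strategy—reduce to conditioning on $\CF_m$, push forward by $\varphi^m$, and compare the tail with a fresh iteration via Lemma~\ref{lem::rn}—is the same as the paper's. But your handling of (b) has a genuine gap. The events $E_{m+1}$ (on $E_m$) and $E_1$ ask the flow lines to hit \emph{different targets}: the random curves $\wh{\eta}_{m+1}^L,\wh{\eta}_{m+1}^R$ versus $\R_-,\R_+$. A Radon--Nikodym bound on the law of the flow lines does not by itself compare these two hitting events; Lemma~\ref{lem::rn} controls the law of a path stopped at exit from a fixed set $U$, not the probability of reaching one target versus another. Your geometric description is also off: $\wh{\eta}_{m+1}^L$ starts near $0$ on $\R_-$, hence \emph{inside} $B(i,e^\Delta)$, so it does not ``exit before reaching the region relevant for $E_{m+1}$''. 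What one actually needs (and what the paper proves) is that the images $\varphi^{j-1,m}(\eta_j^q([0,\tau_j^q]))$ stay outside $B(i,c e^\Delta)$ for some $c>0$, which follows from condition~\eqref{it::paths_not_hit_small_ball} and a harmonic measure computation.

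The paper closes this gap with three ingredients you are missing. First, it truncates all flow lines at their exit from the tube $T_\Delta$ (the $e^{-\Delta}$-neighbourhood of $[0,i]$) and applies Lemma~\ref{lem::rn} only to these truncated paths. Second—and this is the key structural point (Figure~\ref{fig::perfect_conditional})—once the first and last flow lines $\eta_{m+1}^q$ and $(\varphi^{m,n-1})^{-1}(\eta_n^q)$ are fixed and shown to merge, the intermediate flow lines $j=m+2,\ldots,n-1$ are \emph{trapped} between them, so their conditional law given these bounding paths is \emph{equal} in the two setups, not merely comparable; this is what prevents any dependence on $n-m$ from creeping into the constants. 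Third, the extension of the first and last flow lines from $\partial T_\Delta$ out to the actual targets (whether $\R_\pm$ or $\wh\eta_{m+1}^q$) is handled by \cite[Lemma~2.5]{MW_INTERSECTIONS}, which gives a uniformly positive conditional probability of merging/hitting before leaving $B(i,e^\Delta)$ in either setup. Your sketch for (a) is also more elaborate than necessary: once the previous paths are shown to stay outside $B(i,c e^\Delta)$, the force point $u_{m+1}$ is automatically covered by the ``force points outside $U$ at distance $\geq\zeta$'' clause of Lemma~\ref{lem::rn}, with no separate compactness argument required.
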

\begin{proof}
See Figure~\ref{fig::perfect_conditional} for an illustration of the setup of the proof.  Let $T_\Delta$ the be the $e^{-\Delta}$-neighborhood of $[0,i]$.  Throughout, unless explicitly stated otherwise, we shall assume that the paths in the proof are stopped upon exiting $T_\Delta$.  By the definition of the stopping times $\tau_j^q$, we know that $\eta_j^q([0,\tau_j^q])$ does not intersect $(f_{\tau_j}^j)^{-1}(e^{\Delta - \beta} |(f_{\tau_j}^j)'(i)| \D)$ for $q \in \{L,R\}$ and $1 \leq j \leq m$.  By Lemma~\ref{lem::distortion}, we know that $\im(f_{\tau_j}^j(i)) \asymp |(f_{\tau_j}^j)'(i)|  e^{-\beta}$.  It follows that the probability that a Brownian motion starting from $f_{\tau_j}^j(i)$ hits $\partial (e^{\Delta-\beta} |(f_{\tau_j}^j)'(i)| \D)$ before hitting $\partial \h$ is $\asymp e^{-\Delta}$.  Consequently, it follows from the conformal invariance of Brownian motion that there exists a constant $c > 0$ such that $\varphi_j((f_{\tau_j}^j)^{-1}(e^{\Delta - \beta} |(f_{\tau_j}^j)'(i)| \D)) \cap B(i, c e^\Delta) = \emptyset$.  This, in turn, implies that $\varphi^{j-1,m}(\eta_j^q([0,\tau_j^q]))$ does not intersect $B(i,c e^{\Delta})$ for $q \in \{L,R\}$ and $1 \leq j \leq m$.  Let $\CG_m = \sigma(\varphi^{j-1,m}(\eta_j^q([0,\tau_j^q])) : q \in \{L,R\},\ 1 \leq j \leq m)$.  By Lemma~\ref{lem::rn}, we thus have that the law of $\eta_{m+1}'$ given $\CG_m$ and the law of $\eta_1'$ are mutually absolutely continuous with Radon-Nikodym derivative which is bounded from above and below by a finite and positive constant provided we make $\Delta_0 > 1$ sufficiently large.  Fix a path $\gamma'$ which is contained in the support of the law of $\eta_1'$.  Again applying Lemma~\ref{lem::rn}, we also have that the Radon-Nikodym derivative between the conditional law of $(\eta_1^L,\eta_1^R)$ given $\eta_1' = \gamma'$ and $(\eta_{m+1}^L,\eta_{m+1}^R)$ given $\CG_m$ and $\eta_{m+1}' = \gamma'$ is bounded from above and below by finite and positive constants.  Fix a pair of paths $(\gamma_1^L,\gamma_1^R)$ which are contained in the support of the law of $(\eta_1^L,\eta_1^R)$ given $\eta_1' = \gamma'$.  Applying Lemma~\ref{lem::rn} a final time, we have that the Radon-Nikodym derivative between the conditional law of $(\eta_{n-m}^L,\eta_{n-m}^R)$ given $\eta_1' = \gamma'$ and $(\eta_1^L,\eta_1^R) = (\gamma_1^L,\gamma_1^R)$ and the conditional law of $(\eta_n^L,\eta_n^R)$ given $\CG_m$, $\eta_{m+1}' = \gamma'$, and $(\eta_{m+1}^L,\eta_{m+1}^R) = (\gamma_1^L,\gamma_1^R)$ is bounded from above and below by finite and positive constants.  If $\gamma_1^L$ and $(\varphi^{n-m-1})^{-1}(\gamma_{n-m}^L)$ intersect each other and $\gamma_1^R$ and $(\varphi^{n-m-1})^{-1}(\gamma_{n-m}^R)$ intersect each other, then the conditional law of $(\varphi^{1,2})^{-1}(\eta_2^q),\ldots,(\varphi^{1,n-m-2})^{-1}(\eta_{n-m-1}^q)$ for $q \in \{L,R\}$ given $\eta_1'=\gamma'$, $(\eta_1^L,\eta_1^R) = (\gamma_1^L,\gamma_1^R)$, and $(\eta_{n-m}^L,\eta_{n-m}^R) = (\gamma_{n-m}^L,\gamma_{n-m}^R)$ is equal to the conditional law of  $(\varphi^{m,m+1})^{-1}(\eta_{m+2}^q),\ldots,(\varphi^{m,n-2})^{-1}(\eta_{n-1}^q)$ for $q \in \{L,R\}$ given $\CG_m$, $\eta_{m+1}'=\gamma'$, $(\eta_{m+1}^L,\eta_{m+1}^R) = (\gamma_1^L,\gamma_1^R)$, and $(\eta_n^L,\eta_n^R) = (\gamma_{n-m}^L,\gamma_{n-m}^R)$.

By \cite[Lemma~2.5]{MW_INTERSECTIONS}, we know that the conditional probability that $\eta_1^L$ and $\eta_1^R$ hit $\R_-$ and $\R_+$, respectively, before leaving $B(i,e^\Delta)$ and intersecting each other given their realization up until exiting $T_\Delta$ and the other paths is uniformly positive.  Similarly, \cite[Lemma~2.5]{MW_INTERSECTIONS} implies that, on $E^m$, the conditional probability that $\eta_{m+1}^L$ and $\eta_{m+1}^R$ merge into $\wh{\eta}_{m+1}^L$ and $\wh{\eta}_{m+1}^R$, respectively, before leaving $B(i,e^\Delta)$ given their realization up until exiting $T_\Delta$ and the other paths is uniformly positive.  Combining everything completes the proof.
\end{proof}

\begin{proof}[Proof of Proposition~\ref{prop::perfect_one_point}]
This follows by using Lemma~\ref{lem::perfect_conditional} to iterate the estimate from Lemma~\ref{lem::perfect_one_point}.
\end{proof}

Now that we have proved the one point estimate for the perfect points, we turn to establish the two point estimate.  We let
\begin{equation}
\label{eqn::phi_z_iterate}
\varphi_z^{j,k} = \varphi_{z,j+1} \circ \cdots \circ \varphi_{z,k} \quad\text{for}\quad j < k \quad\text{and}\quad \varphi_z^k = \varphi_z^{0,k}.
\end{equation}
For each $n \in \N$ and $z \in \h$, we also let
\begin{align*}
 \ul{V}_n(z) &= B(z,2^{-5 n-5} \im(z) e^{-n \beta}) \quad \text{and} \quad \ol{V}_n(z)  = B(z, 2^{5 n+5} \im(z) e^{-n \beta}).
\end{align*}

\begin{lemma}
\label{lem::v_contain}
There exists $\Delta_0 > 1$ such that for all $\beta >\Delta^2> \Delta \geq \Delta_0$, the following are true.
\begin{enumerate}[(i)]
\item\label{it::in_v} For each $m,n \in \N$ with $m \geq n+2$, on $\{\tau_{z,m} < \infty\}$ we have that $\psi_z^{-1} \circ (\varphi_z^{m-1})^{-1}(\gamma) \subseteq \ol{V}_n(z)$ for $\gamma = \eta_{z,m}^q|_{[0,\tau_{z,m}^q]}$ for $q \in \{L,R\}$ and for $\gamma = \eta_{z,m}'|_{[0,\tau_{z,m}]}$.
\item\label{it::out_v} For each $m,n \in \N$ with $m +2\leq n$, on $\{\tau_{z,m} < \infty\}$ we have that $\psi_z^{-1} \circ (\varphi_z^{m-1})^{-1}(\gamma) \cap \ul{V}_n(z) = \emptyset$ for $\gamma = \eta_{z,m}^q|_{[0,\tau_{z,m}^q]}$ for $q \in \{L,R\}$ and for $\gamma = \eta_{z,m}'|_{[0,\tau_{z,m}]}$.
\end{enumerate}
\end{lemma}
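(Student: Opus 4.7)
The plan is to iterate standard Koebe distortion bounds (Lemmas~\ref{lem::distortion} and~\ref{lem::ball_size}) level by level. The key fact is that at each level $j$ the map $\varphi_{z,j}^{-1}\colon\h\to D_{z,j}$ fixes $i$ and sends the tip of $\eta_{z,j}'$ (which lies on $\partial B(i,e^{-\beta})$) to $0$, so $\dist(i,\partial D_{z,j})=e^{-\beta}$ while $\dist(i,\partial\h)=1$. Thus by Lemma~\ref{lem::distortion}, $|(\varphi_{z,j}^{-1})'(i)|\asymp e^{-\beta}$, and by Lemma~\ref{lem::ball_size} the map $\varphi_{z,j}^{-1}$ contracts neighborhoods of $i$ of radius $r<1/2$ by a factor at most $8e^{-\beta}$ per level. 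Analogously, since $\psi_z(z)=i$ and $\dist(z,\partial\h)=\im(z)$, one has $|(\psi_z^{-1})'(i)|\asymp\im(z)$, and $\psi_z^{-1}$ dilates small neighborhoods of $i$ by a factor of order $\im(z)$.

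For part~(i), all three level-$m$ paths are contained in $B(i,e^\Delta)\cap\h$ by the stopping conditions. A single pullback by $\varphi_{z,m-1}^{-1}$ maps this region into $B(i,C_\Delta e^{-\beta+2\Delta})$, which follows from Koebe's growth theorem or from applying Lemma~\ref{lem::ball_size} along a finite chain of radii. Since $\beta>\Delta^2$, this ball has radius much less than $1/2$, so each of the remaining $m-2$ pullbacks contributes a further contraction factor at most $8e^{-\beta}$; finally $\psi_z^{-1}$ multiplies distances by at most $4\im(z)$. The composite estimate is
\[
\diam\bigl(\psi_z^{-1}\circ(\varphi_z^{m-1})^{-1}(\gamma)\bigr)\;\le\;C_\Delta\,2^{3(m-1)}\,e^{-(m-1)\beta+2\Delta}\,\im(z),
\]
and for $m\ge n+2$ and $\beta$ large (as ensured by $\beta>\Delta^2>\Delta\ge\Delta_0$ with $\Delta_0$ taken sufficiently large) this is at most $2^{5n+5}\im(z)e^{-n\beta}$. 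Combined with a parallel iteration showing that the pullback of the starting point $0$ sits at distance $\asymp\im(z)e^{-(m-1)\beta}$ from $z$, this gives containment of the entire pulled-back path in $\ol{V}_n(z)$.

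For part~(ii) the plan is instead to push $\ul{V}_n(z)$ forward through $\psi_z$ and the $\varphi_{z,j}$'s and show that its image at level~$m$ is disjoint from the three paths. By Lemma~\ref{lem::ball_size} applied to $\psi_z$ at $z$, $\psi_z(\ul{V}_n(z))$ sits inside $B(i,8\cdot 2^{-5n-5}e^{-n\beta})$ at level~$1$. Each forward step $\varphi_{z,j}$ dilates the current radius by a factor at most $16e^\beta/3$, provided the current radius remains $\le e^{-\beta}/2$, a condition that propagates along the iteration because $m+2\le n$ keeps each intermediate radius tiny. After $m-1$ steps the image lies inside $B(i,2^{4m-5n-9}e^{(m-1-n)\beta})\subseteq B(i,2^{-n-17}e^{-3\beta})$, which for $\beta$ large is strictly inside $B(i,e^{-\beta})$. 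The latter is disjoint from $\eta_{z,m}'|_{[0,\tau_{z,m}]}$ by definition of $\tau_{z,m}$. For the flow lines $\eta_{z,m}^q$ one additionally uses Koebe's $1/4$ theorem at~$i$ together with the identity $\im(f_{\tau_{z,m}}(i))\asymp|f_{\tau_{z,m}}'(i)|e^{-\beta}$ (from Lemma~\ref{lem::distortion}) to show that the preimage disk $f_{\tau_{z,m}}^{-1}(e^{\Delta-\beta}|f_{\tau_{z,m}}'(i)|\D)$ contains $B(i,c\,e^{-\beta})$ for some universal $c>0$, so the paths $\eta_{z,m}^q|_{[0,\tau_{z,m}^q]}$ likewise avoid a neighborhood of $i$ of comparable size.

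The main obstacle is bookkeeping: the per-level Koebe constants ($8$ for contraction in part~(i), $16/3$ for dilation in part~(ii)) must be absorbed by the factors $2^{\pm(5n+5)}$ in the definitions of $\ol{V}_n(z)$ and $\ul{V}_n(z)$, which is precisely what forces $\Delta_0$ to be taken large enough that the first-step estimate in part~(i)---going from $B(i,e^\Delta)$ to $B(i,C_\Delta e^{-\beta+2\Delta})$---does not introduce a constant larger than permitted.
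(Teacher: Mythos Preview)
Your overall plan---iterating the Koebe bounds of Lemmas~\ref{lem::distortion} and~\ref{lem::ball_size} level by level---matches the paper's, and the per-level estimate $\varphi_{z,j}^{-1}(B(i,r)) \subseteq B(i,16\,r\,e^{-\beta})$ for $r \le \tfrac12$ is exactly what the paper uses. The gap is in the \emph{first} pullback in part~(i). You assert that $\varphi_{z,m-1}^{-1}$ takes $B(i,e^\Delta)\cap\h$ into $B(i,C_\Delta e^{-\beta+2\Delta})$ ``from Koebe's growth theorem or from applying Lemma~\ref{lem::ball_size} along a finite chain of radii,'' but neither of these works as stated. Since $e^\Delta > 1 = \dist(i,\partial\h)$, Lemma~\ref{lem::ball_size} does not apply to $B(i,e^\Delta)$, and a chain-of-balls argument would require control of $\dist(\varphi_{z,m-1}^{-1}(w),\partial D_{z,m-1})$ at intermediate centers $w$, which you do not have. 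Koebe's growth theorem gives lower bounds on the forward map $\varphi_{z,m-1}$ only on $B(i,e^{-\beta})\subseteq D_{z,m-1}$, which says nothing about preimages of points at distance $e^\Delta$ from $i$. Equivalently, passing to the disk model, $\varphi_{z,m-1}^{-1}$ becomes a self-map $g\colon\D\to\D$ with $g(0)=0$ and $|g'(0)|\asymp e^{-\beta}$; Schwarz gives only $|g(z)|\le|z|$, which is useless for $|z|$ close to $1$ (and the level-$m$ paths, which touch $\partial\h$, correspond to such points).

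What makes the first step go through is the geometric fact that $\eta_{z,m-1}'([0,\tau_{z,m-1}])$ is a curve joining $\partial\h$ to $\partial B(i,e^{-\beta})$, so Brownian motion from $i$ in $D_{z,m-1}$ is very unlikely to reach $\partial B(i,r)$ for $r\gg e^{-\beta}$ without first hitting the curve. This is precisely the \hyperref[thm::beurling]{Beurling estimate}, and it is how the paper proves Lemma~\ref{lem::beta_ball_contained}, which it then invokes to get $\varphi_{m-1}^{-1}(\eta_m^q([0,\tau_m^q]))\subseteq B(i,e^{-\beta/4})$ before running the Koebe iteration you describe. Once that first step is supplied via Beurling, your argument and the paper's coincide.
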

\begin{proof}
We are first going to give the proof in the case that $z=i$ and we will first establish part~\eqref{it::in_v}.  Fix $m,n \in \N$ with $m \geq n+2$.  Throughout, we shall assume that we are working on $E^m$.  It follows from Lemma~\ref{lem::ball_size} that if $r \in (0,\tfrac{1}{2}]$ then
\begin{align}
\label{eqn::varphi_ball}
 B(i,\tfrac{1}{16} r e^{-\beta}) \subseteq \varphi_k^{-1}(B(i,r)) \subseteq B (i,16 r e^{-\beta}) \quad&\text{for}\quad 1 \leq k \leq m.
\end{align}
Iterating~\eqref{eqn::varphi_ball} implies that
\begin{equation}
\label{eqn::varphi_iterate_contain}
\begin{split}
B(i,2^{-5 k} e^{-k\beta}) \subseteq (\varphi^k)^{-1}(B(i,\tfrac{1}{2})) \subseteq B(i,2^{5 k} e^{-k \beta }) \quad& \text{for}\quad 1 \leq k \leq m
\end{split}
\end{equation}
(provided we take $\Delta_0 > 1$ large enough).

Note that $\eta_m^q([0,\tau_m^q]) \subseteq B(i, e^{\Delta})$ for $q \in \{L,R\}$ by the definition of the events.  Consequently, it follows from Lemma~\ref{lem::beta_ball_contained} that $\varphi_{m-1}^{-1}(\eta_m^q([0,\tau_m^q])) \subseteq B(i,e^{-\beta/4})$ for $q \in \{L,R\}$ provided $\Delta_0 > 1$ is large enough.  We also assume that $\Delta_0 > 1$ is sufficiently large so that $e^{-\Delta_0/4} < \tfrac{1}{2}$.  Applying~\eqref{eqn::varphi_iterate_contain} proves part~\eqref{it::in_v} for $\eta_m^q|_{[0,\tau_m^q]}$ for $q \in \{L,R\}$; the proof for $\eta_m'|_{[0,\tau_m]}$ is analogous.  This proves part~\eqref{it::in_v} for $z=i$.  For the case that $z \neq i$, we note that applying Lemma~\ref{lem::ball_size} with $r \in (0,\tfrac{1}{2}]$ again yields,
\begin{equation}
\label{eqn::psi_z_contain}
B(i,\tfrac{1}{16} r \im(z)) \subseteq \psi_z^{-1}(B(i,r)) \subseteq B(i,16 r \im(z)).
\end{equation}
Combining~\eqref{eqn::varphi_iterate_contain} with~\eqref{eqn::psi_z_contain} gives part~\eqref{it::in_v}.  The proof of part~\eqref{it::out_v} is the same.
\end{proof}

\begin{proposition}
\label{prop::perfect_two_point}
Fix $\delta \in (0,\tfrac{\pi}{2})$.  Suppose that $z,w \in \h \cap \D$ are distinct with $\arg(z),\arg(w) \in (\delta,\pi-\delta)$.  Let $m$ be the smallest integer such that $\ol{V}_{m-1}(z) \cap \ol{V}_{m-1}(w) = \emptyset$.  Then we have that
\[ \p[E^n(z), E^n(w)] \lesssim e^{O(\beta) + \beta (1+o_{\beta}(1)) \alpha m} \p[E^n(z)]\p[E^n(w)]\]
where the~$o_{\beta}(1)$ term tends to zero as $\beta \to \infty$.  Moreover, the rate at which the~$o_\beta(1)$ term tends to zero and the constants in the~$O(\beta)$ term and~$\asymp$ depend only on~$\kappa$, $\theta$, $u_1$, and~$\delta$.
\end{proposition}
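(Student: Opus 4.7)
The plan is to decompose the joint event at scale $m$: beyond this scale the paths for $z$ and $w$ live in the disjoint neighborhoods $\ol{V}_{m-1}(z)$ and $\ol{V}_{m-1}(w)$, so the two configurations can be decoupled, while up to scale $m$ we simply discard one of the two events. Concretely, let $\CG_m$ denote the $\sigma$-algebra generated by all the paths $\eta_{z,j}'$, $\eta_{z,j}^L$, $\eta_{z,j}^R$ and $\eta_{w,j}'$, $\eta_{w,j}^L$, $\eta_{w,j}^R$ for $1 \leq j \leq m$, stopped at the corresponding times $\tau_{z,j}, \tau_{z,j}^L, \tau_{z,j}^R$ and $\tau_{w,j}, \tau_{w,j}^L, \tau_{w,j}^R$. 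Then
\[ \p[E^n(z),E^n(w)] = \E\big[\one_{E^m(z) \cap E^m(w)}\, \p[E^{m,n}(z) \cap E^{m,n}(w) \giv \CG_m] \big].\]

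For the prefactor I would simply use $\p[E^m(z) \cap E^m(w)] \leq \p[E^m(z)]$ together with Proposition~\ref{prop::perfect_one_point}, which gives $\p[E^m(z)] \asymp e^{-\beta(1+o_\beta(1))\alpha m}$; the analogous bound holds with $w$ in place of $z$, and the factor $e^{O(\beta)}$ in the statement is meant to absorb the one-scale slack arising from the definition of $m$ (the constants from Lemma~\ref{lem::v_contain} and Lemma~\ref{lem::distortion} produce a bounded shift in the effective overlapping scale). The heart of the proof is therefore to show
\[ \p[E^{m,n}(z) \cap E^{m,n}(w) \giv \CG_m] \lesssim e^{O(\beta)}\,\p[E^{n-m}]^2 \quad\text{on}\quad E^m(z) \cap E^m(w),\]
which, combined with Proposition~\ref{prop::perfect_one_point} for the right-hand side, yields the desired inequality after multiplying through.

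To establish this conditional decoupling I would run the Radon–Nikodym argument from Lemma~\ref{lem::perfect_conditional} \emph{simultaneously} at the two points. By part~\eqref{it::in_v} of Lemma~\ref{lem::v_contain}, every path that enters the definition of $E_j(z)$ for $j > m$ lies inside $\psi_z^{-1} \circ (\varphi_z^{j-1})^{-1}$ of a small ball, whose image sits inside $\ol{V}_{m-1}(z)$, and symmetrically for $w$. Since $m$ was chosen so that $\ol{V}_{m-1}(z) \cap \ol{V}_{m-1}(w) = \emptyset$, the paths building up $E^{m,n}(z)$ and those building up $E^{m,n}(w)$ are supported in disjoint subsets of $\h$ bounded away from each other by a positive distance (depending on $\arg(z),\arg(w) \in (\delta,\pi-\delta)$ and the geometry of the $\ol{V}$'s). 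I can then apply Lemma~\ref{lem::rn} twice: first to replace the conditional law of $\eta_{z,m+1}'$ given $\CG_m$ (with the configuration around $w$ ``frozen'' at positive distance) by its unconditional law, and similarly for $\eta_{w,m+1}'$, and then to iterate the construction through scales $m+1,\ldots,n$ exactly as in the proof of Lemma~\ref{lem::perfect_conditional} but carried out in parallel for both points. Each application costs a multiplicative constant depending only on $\kappa,\theta,\Delta,\delta,u_1$, and the separation of the two regions, and Lemma~\ref{lem::v_contain}\eqref{it::out_v} ensures that the paths never enter each other's neighborhoods, so no additional $n$-dependent factors appear. The outcome is that $\p[E^{m,n}(z) \cap E^{m,n}(w) \giv \CG_m]$ on $E^m(z) \cap E^m(w)$ factorizes (up to bounded constants) as a product of two expressions of the type handled by Lemma~\ref{lem::perfect_conditional}, each giving $\asymp \p[E^{n-m}]$.

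The main obstacle is the simultaneous Radon–Nikodym step when conditioning on both configurations at once: one must verify that the boundary data produced on the (common) outer domain by freezing the paths at $z$ up to scale $m$ does not spoil the absolute-continuity comparison for the paths at $w$, and vice versa. This is exactly where the separation enforced by the defining property of $m$ (together with Lemma~\ref{lem::v_contain}) is essential, because Lemma~\ref{lem::rn} only delivers bounded Radon–Nikodym derivatives when the modified boundary data is kept at positive distance from the region of interest. The $e^{O(\beta)}$ term in the final estimate comes precisely from the finitely many ``transitional'' scales near $m$ where this separation is only of order $e^{-\beta}$ rather than macroscopic; here one cannot apply Lemma~\ref{lem::rn} with uniform constants and must accept the extra $e^{O(\beta)}$ loss coming from the one-point estimate applied at that single scale.
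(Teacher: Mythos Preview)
Your overall strategy is reasonable, but the symmetric conditioning on $\CG_m$ has a real gap that the paper's proof is designed to avoid. All of the counterflow line segments $\eta_{z,j}'$ and $\eta_{w,j}'$ are pieces of the \emph{same} path $\eta'$, just seen through different conformal charts (cf.\ Remark~\ref{rem::ej}(ii)). Since $\ol{V}_{m-1}(z)$ and $\ol{V}_{m-1}(w)$ are disjoint, $\eta'$ enters one of them first; say we are on the event $P_w$ that it enters the $w$-neighborhood first. Then the stopping time $\tau_{z,m}$ occurs only \emph{after} $\eta'$ has travelled through all of the $w$ fine scales, so the path $\eta_{z,m}'|_{[0,\tau_{z,m}]}$, which you have placed in $\CG_m$, already contains the entire counterflow line inside $\ol{V}_{m-1}(w)$. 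Consequently the counterflow-line part of $E^{m,n}(w)$ is $\CG_m$-measurable on $P_w$, and your claim that each of $E^{m,n}(z)$ and $E^{m,n}(w)$ contributes a fresh factor $\asymp \p[E^{n-m}]$ after conditioning is not justified: the second factor cannot be produced by a Radon--Nikodym comparison because the paths you are conditioning on are not at positive distance from the region $\ol{V}_{m-1}(w)$ --- they sit right inside it. In particular your sentence ``apply Lemma~\ref{lem::rn} twice: first to replace the conditional law of $\eta_{z,m+1}'\ldots$ and similarly for $\eta_{w,m+1}'$'' fails on $P_w$, since $\eta_{w,m+1}'$ has already been revealed.

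The paper resolves this by an \emph{asymmetric} decomposition. One first splits according to $P_w$ versus $P_z$. On $P_w$ one conditions on the full $w$-configuration $\CF_m^n(w)$ (counterflow line through all $n$ scales and the $w$-flow lines at all scales except a buffer $j=m-4,\ldots,m$), together with $E_{m+1}(z)$, and proves only that the $z$ fine-scale event $E^{m+1,n}(z)$ decouples; this is Lemma~\ref{lem::perfect_two_point_conditional}. The buffer scales are dropped precisely so that the remaining $w$-flow lines stay outside $\ol{V}_{m-2}(w)\supseteq \ol{V}_{m-1}(z)$ by Lemma~\ref{lem::v_contain}, which is what makes Lemma~\ref{lem::rn} applicable. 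The surviving $w$-event $E_m^n(w)=E^{m-5}(w)\cap E^{m,n}(w)$ is then compared to $\p[E^n(w)]$ via Lemma~\ref{lem::perfect_conditional} and Proposition~\ref{prop::perfect_one_point}, and the missing $\p[E^m]^{-1}$ produces the factor $e^{\beta(1+o_\beta(1))\alpha m}$. Your argument is missing both the $P_z/P_w$ split and the buffer-scale mechanism; without them the decoupling step does not go through.
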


Before we prove Proposition~\ref{prop::perfect_two_point}, we will need to collect the following lemma which is the analog of Lemma~\ref{lem::perfect_conditional} in the setting of two points.  For each $z \in \h$ and $k \in \N$, we let $\CF_m^k(z)$ be the $\sigma$-algebra generated by $\eta_{z,j}'|_{[0,\tau_{z,j}']}$ for $1 \leq j \leq k$ and $\eta_{z,j}^q|_{[0,\tau_{z,j}^q]}$ for $q \in \{L,R\}$ and $1 \leq j \leq k$, $j \neq m-4,\ldots,m$.  See Figure~\ref{fig::perfect_two_point} for an illustration of the setup as well as the proof.

\begin{figure}[ht!]
\begin{center}
\includegraphics[scale=0.85]{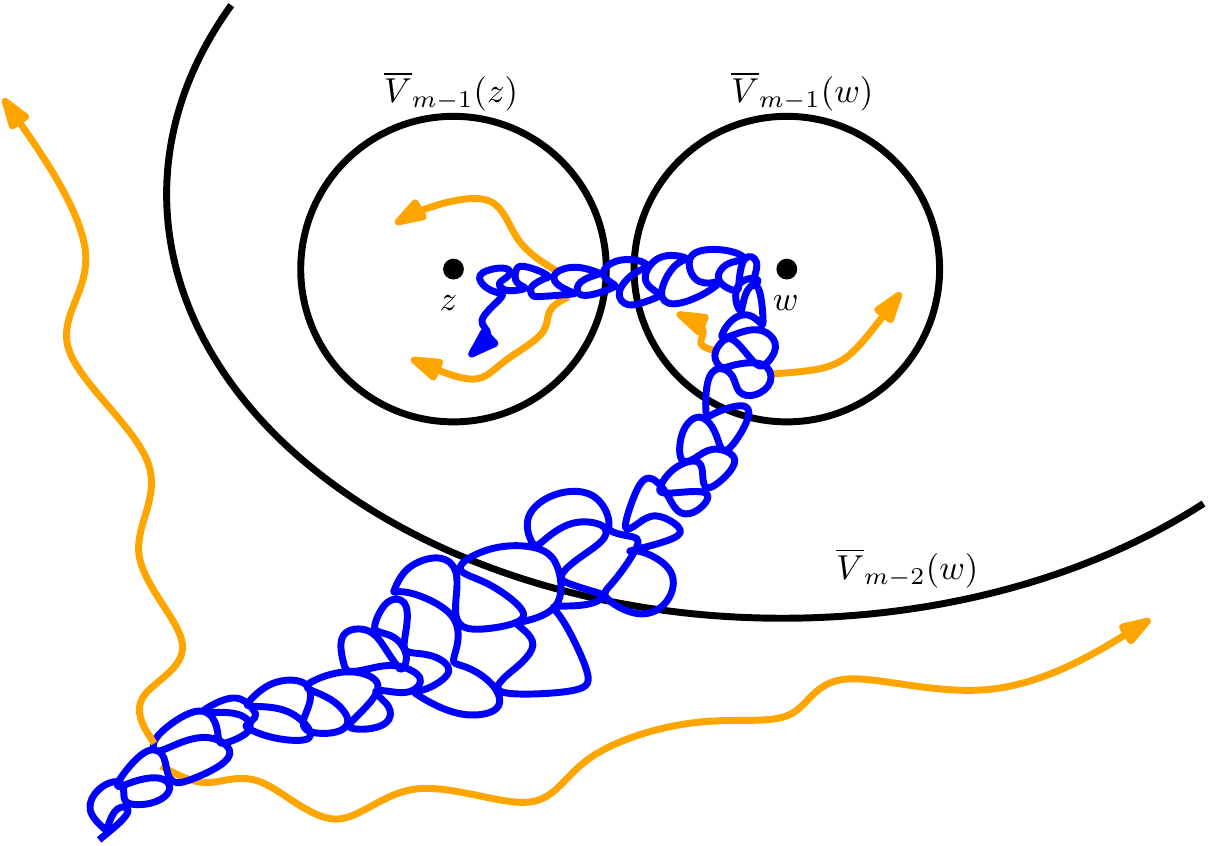}
\end{center}
\caption{\label{fig::perfect_two_point} Illustration of the setup for the two-point estimate proved in Lemma~\ref{lem::perfect_two_point_conditional}.  Shown is $\eta_1'$ on the event that it hits $\ol{V}_k(w)$ (not shown) before hitting $\ol{V}_{m-1}(z)$ as well as the auxiliary paths involved in the definition of the events $E^{m-5}(w)$, $E^{m,k}(w)$, and $E^{m,n}(z)$.  By Lemma~\ref{lem::v_contain}, these paths are contained in $\h \setminus \ol{V}_{m-2}(w)$, $\ol{V}_{m-1}(w)$, and $\ol{V}_{m-1}(z)$ respectively.  Since these regions are disjoint, we can use the Radon-Nikodym estimate Lemma~\ref{lem::rn} to show that the events $E^{m-5}(w)$, $E^{m,k}(w)$, and $E^{m,n}(z)$ are approximately independent.}
\end{figure}

\begin{lemma}
\label{lem::perfect_two_point_conditional}
There exists $\Delta_0 > 1$ such that $\beta > \Delta^2 > \Delta > \Delta_0$ implies that the following is true.  Fix $\delta \in (0,\tfrac{\pi}{2})$ and suppose that $z,w \in \h \cap \D$ are distinct with $\arg(z),\arg(w) \in (\delta,\pi-\delta)$.  Let $m$ be the smallest integer such that $\ol{V}_{m-1}(z) \cap \ol{V}_{m-1}(w) = \emptyset$.  Fix $n \geq m$ and let $P_w$ be the event that $\eta'$ hits $\ol{V}_k(w)$ before hitting $\ol{V}_{m-1}(z)$.  Let $E_m^k(w) = E^{m-5}(w) \cap E^{m,k}(w)$.  For all $k \geq m$, we have that
\begin{equation}
\label{eqn::perfect_two_point_conditional}
\begin{split}
 &\p[E^{m+1,n}(z) \giv \CF_m^k(w), E_{m+1}] \one_{E_m^k(w), P_w} \asymp \p[E^{n-m-1}] \one_{E_m^k(w), P_w}
\end{split}
\end{equation}
where the constants in $\asymp$ depend only on $\kappa$, $\theta$, $\Delta$, $u_1$, and $\delta$.
\end{lemma}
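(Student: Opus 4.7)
The plan is to adapt the proof of the one-point conditional estimate Lemma~\ref{lem::perfect_conditional} to the two-point setting, using Lemma~\ref{lem::v_contain} to localize the relevant paths and Lemma~\ref{lem::rn} to decouple the $w$-side conditioning from the $z$-side events, then finishing by iterating Lemma~\ref{lem::perfect_conditional} at the scales $m+2,\ldots,n$ around $z$.

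First, I would invoke Lemma~\ref{lem::v_contain} to localize the paths encoded in $\CF_m^k(w)$. Part~(ii) applied to the scales $j = 1, \ldots, m-5$ shows that the auxiliary paths contributing to $E^{m-5}(w)$ avoid $\ul{V}_{m-3}(w)$, hence for $\beta$ sufficiently large they lie in $\h \setminus \ol{V}_{m-2}(w)$. Part~(i) applied to the scales $j = m+1, \ldots, k$ shows that the auxiliary paths contributing to $E^{m,k}(w)$ lie in $\ol{V}_{m-1}(w)$. The same lemma implies that the auxiliary paths determining $E^{m+1,n}(z)$ lie in $\ol{V}_m(z) \subseteq \ol{V}_{m-1}(z)$, which is disjoint from $\ol{V}_{m-1}(w)$ by the minimality of $m$. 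The five scales $j = m-4,\ldots,m$ deliberately omitted from $\CF_m^k(w)$ supply the buffer between the macroscopic and microscopic $w$-side paths that is needed for the distance hypothesis in Lemma~\ref{lem::rn}.

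Second, with this spatial separation in hand, I would apply Lemma~\ref{lem::rn} three successive times, exactly as in the proof of Lemma~\ref{lem::perfect_conditional}: first to show that the conditional law of $\eta_{z,m+1}'$ given $\CF_m^k(w)$ on the event $\{E_m^k(w), P_w\}$ is absolutely continuous with respect to the unconditional law of $\eta_1'$, with Radon-Nikodym derivative bounded above and below by constants depending only on $\kappa, \theta, \Delta, u_1$, and $\delta$; then similarly for the conditional law of the scale-$(m+1)$ auxiliary pair $(\eta_{z,m+1}^L, \eta_{z,m+1}^R)$ given $\eta_{z,m+1}'$; and finally for the deeper-scale pair $(\eta_{z,j}^L, \eta_{z,j}^R)_{j \ge m+2}$ given the scale-$(m+1)$ pair. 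The key point is that the separation from Step~1, combined with Lemma~\ref{lem::distortion} and Lemma~\ref{lem::ball_size}, ensures that the distance parameter $\zeta$ in Lemma~\ref{lem::rn} can be chosen uniformly.

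Third, once the conditioning has been decoupled, I would iterate Lemma~\ref{lem::perfect_conditional} at the scales $m+2, \ldots, n$ around $z$, starting from the scale-$(m+1)$ configuration fixed by $E_{m+1}$. Combined with the conditional merging arguments from \cite[Section~2]{MW_INTERSECTIONS} that already appear in the proof of Lemma~\ref{lem::perfect_conditional}, this produces the claimed estimate $\asymp \p[E^{n-m-1}]$ on $\{E_m^k(w), P_w\}$. The main technical obstacle is verifying the uniform Radon-Nikodym bound in the presence of both the non-local counterflow line $\eta'$ and the $E^{m-5}(w)$ auxiliary paths which, though confined to $\h \setminus \ol{V}_{m-2}(w)$ by Lemma~\ref{lem::v_contain}, are not a priori disjoint from $\ol{V}_{m-1}(z)$; handling this cleanly requires exploiting the event $P_w$ (which keeps $\eta'$ out of $\ol{V}_{m-1}(z)$ until it has entered $\ol{V}_k(w)$) together with the assumption $\arg(z), \arg(w) \in (\delta, \pi-\delta)$ to control harmonic measures and rule out pathological geometric configurations.
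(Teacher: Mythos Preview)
Your proposal follows essentially the same approach as the paper's proof: localize via Lemma~\ref{lem::v_contain}, establish disjointness of the relevant path families, and then apply Lemma~\ref{lem::rn} exactly as in the proof of Lemma~\ref{lem::perfect_conditional}. The paper is terser---it simply records the containments $\ol{V}_{m-1}(i)$, $\ol{V}_{m-1}(w) \subseteq \ol{V}_{m-2}(w)$ and that the $E^{m-5}(w)$ paths avoid $\ul{V}_{m-3}(w)$, and then concludes ``by conformally mapping back and using Lemma~\ref{lem::rn} as in the proof of Lemma~\ref{lem::perfect_conditional}''---whereas you spell out the three successive Radon--Nikodym comparisons and the subsequent iteration more explicitly. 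One small inaccuracy: Lemma~\ref{lem::v_contain}(ii) only shows the $E^{m-5}(w)$ auxiliary paths avoid $\ul{V}_{m-3}(w)$, not the larger ball $\ol{V}_{m-2}(w)$ as you state; the containment $\ul{V}_{m-3}(w) \subseteq \ol{V}_{m-2}(w)$ goes the other way. The technical concern you flag at the end---that these early-scale $w$-paths are not \emph{a priori} separated from $\ol{V}_{m-1}(z)$---is a genuine point, and the paper addresses it (somewhat elliptically) through the event $P_w$ and the ball containments coming from the choice of $\Delta_0$; your instinct to single this out as the main obstacle is correct.
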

\begin{proof}
We assume that $\Delta_0 > 1$ is sufficiently large so that $\beta > \Delta_0$ implies that $\ul{V}_{n+1}(u) \subseteq \ol{V}_n(u)$ for all $n \in \N$ and $u \in \h$ and also so that Lemma~\ref{lem::v_contain} holds.  The proof is analogous to that of Lemma~\ref{lem::perfect_conditional}.  By applying $\psi_z$, we may assume without generality that $z=i$.  Note that the event $E^{m,n}$ is defined in terms of the paths $(\varphi^{j-1})^{-1}(\eta_j'|_{[0,\tau_j]})$ and $(\varphi^{j-1})^{-1}(\eta_j^q|_{[0,\tau_j^q]})$ for $q \in \{L,R\}$ and $m+1 \leq j \leq n$ and that the event $E_m^k(z)$ is defined in terms of the paths $(\varphi_w^{j-1})^{-1}(\eta_{w,j}'|_{[0,\tau_{w,j}]})$ and $(\varphi_w^{j-1})^{-1}(\eta_{w,j}^q|_{[0,\tau_{w,j}^q]})$ for $q \in \{L,R\}$ and $1 \leq j \leq k$, $j \neq m-4,\ldots,m$.  Lemma~\ref{lem::v_contain} implies that the paths involved in the definition of $E^{m,n}$ (resp.\ $E^{m,k}(w)$) are contained in $\ol{V}_{m-1}(i)$ (resp.\ $\ol{V}_{m-1}(w)$).  Lemma~\ref{lem::v_contain} also implies that the paths involved in the definition of $E^{m-5}(w)$ do not intersect $\ul{V}_{m-3}(w) \subseteq \ol{V}_{m-2}(w)$.  By the choice of $\Delta_0$, we have that $\ol{V}_{m-2}(w)$ contains $\ol{V}_{m-1}(i)$ and $\ol{V}_{m-1}(w)$.  That is, the paths involved in the definition of $E^{m,n}$ and those involved in the definition of $E_m^k(w)$ are disjoint on the event $P_w$.  Thus by conformally mapping back and using the Radon-Nikodym derivative estimate Lemma~\ref{lem::rn} as in the proof of Lemma~\ref{lem::perfect_conditional} it is not hard to see that~\eqref{eqn::perfect_two_point_conditional} holds, as desired.
\end{proof}

\begin{proof}[Proof of Proposition~\ref{prop::perfect_two_point}]
We are going to extract the result from Lemma~\ref{lem::perfect_two_point_conditional}.  Let $m$, $E_m^n(z)$, $P_z$, $E_m^n(w)$, and $P_w$ be as in the statement of Lemma~\ref{lem::perfect_two_point_conditional}  and assume that $\Delta_0 > 1$ is large enough so that Lemma~\ref{lem::perfect_one_point}, Lemma~\ref{lem::perfect_conditional}, and Lemma~\ref{lem::perfect_two_point_conditional} hold.  We have that,
\begin{align*}
      \p[ E^n(z), E^n(w)] 
 \leq &\E\left[ \p[E^{m+1,n}(z) \giv \CF_m^n(w), E_{m+1}(z)] \one_{E_m^n(w), P_w} \right] +\\
  &\E\left[ \p[E^{m+1,n}(w) \giv \CF_m^n(z), E_{m+1}(w)] \one_{E_m^n(z), P_z} \right].
\end{align*}
We are now going to explain how to bound the first summand above.  The second summand is bounded similarly, so this will complete the proof.  We have that,
\begin{align*}
&\E\left[ \p[E^{m+1,n}(z) \giv \CF_m^n(w), E_{m+1}(z)] \one_{E_m^n(w), P_w} \right]\\
\asymp& \p[E^{n-m-1}] \p[E_m^n(w)] \quad\text{(Lemma~\ref{lem::perfect_two_point_conditional})}\\
\lesssim& e^{O(\beta)} \p[E^{n-m}] \p[E^n(w)] \quad\text{(Lemma~\ref{lem::perfect_one_point} and
Lemma~\ref{lem::perfect_conditional})}\\
\asymp& \frac{e^{O(\beta)}}{\p[E^m]} \p[E^n(z)] \p[E^n(w)] \quad\text{(Lemma~\ref{lem::perfect_conditional})}\\
\leq& e^{O(\beta) + \beta(1+o_\beta(1)) \alpha m} \p[E^n(z)] \p[E^n(w)] \quad\text{(Proposition~\ref{prop::perfect_one_point})}.
\end{align*}
\end{proof}

\begin{figure}[ht!]
\subfigure[]{
\includegraphics[scale=0.85]{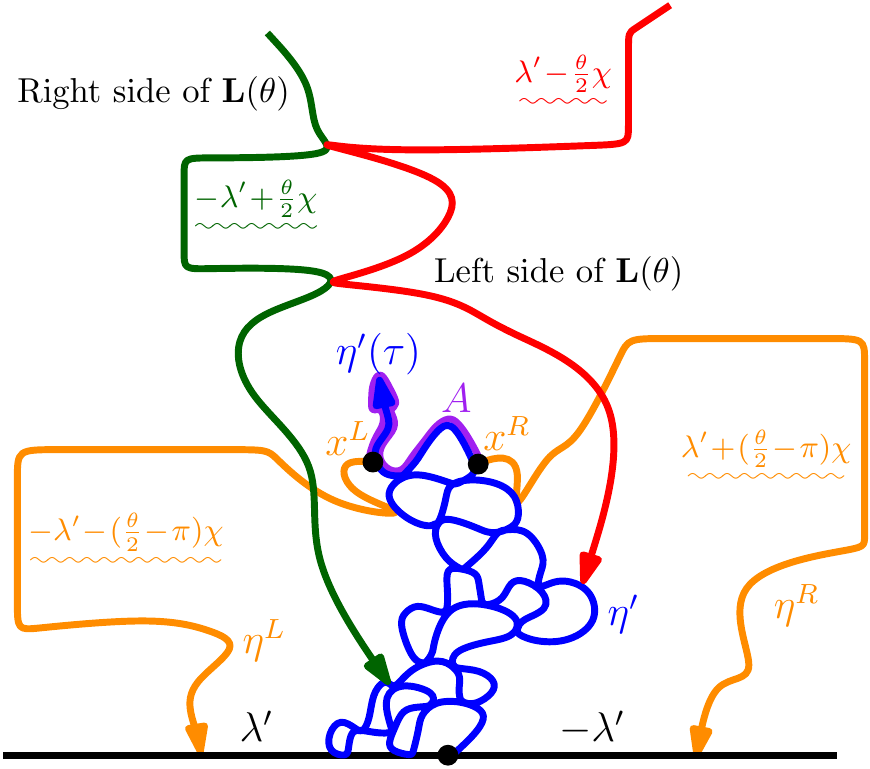}}
\hspace{0.05\textwidth}
\subfigure[Pocket $P$ of $\lightcone(\theta)$]{
\includegraphics[scale=0.85]{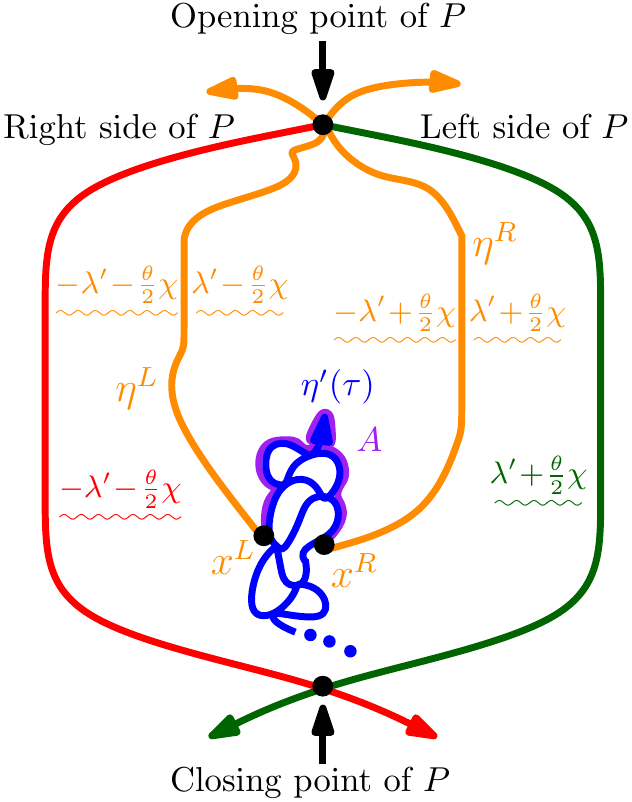}}
\caption{\label{fig::lightcone_hits}  Suppose that $\tau$ is an almost surely finite stopping time for $\eta'$.  Fix a point $x^L$ (resp.\ $x^R$) on the left (resp.\ right) side of $\eta'([0,\tau])$ and let $\eta^L$ (resp.\ $\eta^R$) be the flow line of $h$ starting from $x^L$ (resp.\ $x^R$) with angle $\tfrac{\theta}{2}$ (resp.\ $-\tfrac{\theta}{2}$) stopped upon hitting $\R_-$ (resp.\ $\R_+$).  Let $A$ be the clockwise segment of the outer boundary of $\eta'([0,\tau])$ which runs from $x^L$ to $x^R$ (purple in the illustration).  We show in Lemma~\ref{lem::lightcone_hits} that on the event that $\eta^L$ and $\eta^R$ do not intersect each other we have that $\lightcone(\theta) \cap A \neq \emptyset$ almost surely; this is the converse of the statement explained in Figure~\ref{fig::avoid_small_ball}.  The reason that the ``right'' and ``left'' sides of $\lightcone(\theta)$ appear to be flipped above is because $\lightcone(\theta)$ is growing from $\infty$.  Note that the angle of the flow line which makes up the left (resp.\ right) side of $\lightcone(\theta)$ is the same as that which makes up the right (resp.\ left) side of $P$.  In the right panel, we have only drawn part of $\eta'$ which is in $P$; the initial part of $\eta'$ has been omitted.  Note that the initial part of $\eta'$ in particular contains the closing point of $P$ because counterflow lines visit the ranges of flow lines in reverse chronological order.  This is what prevents $\eta^L,\eta^R$ from exiting $P$ at the closing point.
}
\end{figure}

See the left side of Figure~\ref{fig::lightcone_hits} for an illustration of the of the setup of the following lemma, which we will use to show that the perfect points are almost surely contained in $\lightcone(\theta)$.

\begin{lemma}
\label{lem::lightcone_hits}
Suppose that $\tau$ is an almost surely finite stopping time for $\eta'$.  Fix $x^L$ (resp.\ $x^R$) on the left (resp.\ right) side of $\eta'([0,\tau])$ and let $\eta^L$ (resp.\ $\eta^R$) be the flow line with angle $\tfrac{\theta}{2}$ (resp.\ $-\tfrac{\theta}{2}$) starting from $x^L $ (resp.\ $x^R$) stopped upon hitting $\R_-$ (resp.\ $\R_+$).  Let $A$ be the segment on the outer boundary of $\eta'([0,\tau])$ which runs from $x^L$ to $x^R$ with a clockwise orientation.  On the event that $\eta^L$ and $\eta^R$ do not intersect each other, we have that $\lightcone(\theta) \cap A \neq \emptyset$ almost surely.
\end{lemma}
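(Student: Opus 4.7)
On the event $\{\eta^L \cap \eta^R = \emptyset\}$, the curves $A$, $\eta^L$, $\eta^R$, together with the portion of $\partial\h$ between where $\eta^L$ hits $\R_-$ and where $\eta^R$ hits $\R_+$, enclose a bounded Jordan pocket $P \subset \h$ (this is the pocket shown in the right panel of Figure~\ref{fig::lightcone_hits}). My plan is to produce a sequence of tips of angle-varying flow lines from $0$ lying in $\overline{P}$ that accumulate on $A$; the closure definition of $\lightcone(\theta)$ then delivers the conclusion.

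The first step is to record a trapping property. By the flow-line interaction rules \cite[Theorem~1.5]{MS_IMAG} recalled in Section~\ref{subsec:ig_review}, any flow line with angle $\alpha \in [-\tfrac{\theta}{2},\tfrac{\theta}{2}]$ satisfies $\alpha-\tfrac{\theta}{2} \leq 0$ and $\alpha+\tfrac{\theta}{2}\geq 0$, so it cannot cross $\eta^L$ or $\eta^R$: it may only merge with one of them, hit $A$ (by crossing into the counterflow line $\eta'$), or exit through $\partial\h$. Hence any angle-varying flow line with angles in $[-\tfrac{\theta}{2},\tfrac{\theta}{2}]$ that enters $\overline{P}$ remains in $\overline{P}$ and can leave only through $A$.

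Second, I would show that some element of $\lightcone(\theta)$ lies on $\partial P$. The constant-angle flow line $\gamma_{\theta/2}$ of $h$ from $0$ with angle $\tfrac{\theta}{2}$ is in $\lightcone(\theta)$; since $\eta^L$ has the same angle, the two paths merge on first intersection by \cite[Theorem~1.5]{MS_IMAG}. Using that $\p[\lightcone(\theta) \neq \emptyset]=1$, together with the geometry of the boundary data and the observation from Figure~\ref{fig::lightcone_hits} that the initial part of $\eta'$ contains the ``closing point'' of $P$ (so that $\gamma_{\theta/2}$ is not kept away from $\eta^L$ by an angle-$\pi/2$ barrier), one verifies that this merger must occur. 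After merging, the tip lies on $\partial P$. Switching the angle from $\tfrac{\theta}{2}$ to $-\tfrac{\theta}{2}$ now sends the continuation into $\overline{P}$; by the trapping step it remains there, and can stop only upon meeting $\eta^R$, $A$, or $\partial\h$. Iterating the bouncing scheme of Figure~\ref{fig::lightcone_hit_point} between $\eta^L$ and $\eta^R$ produces a sequence of tips in $\overline{P} \cap \lightcone(\theta)$.

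The main obstacle, which I expect to be the crux, is proving that this sequence of tips accumulates on $A$ rather than on $\eta^L \cup \eta^R \cup \partial\h$. Here I would use a Beurling-type harmonic measure estimate (Theorem~\ref{thm::beurling}), in the same spirit as the argument sketched in Figure~\ref{fig::avoid_small_ball}: after $n$ bounces, the subdomain of $P$ not yet traced by the angle-varying path is a Jordan domain in which the only boundary arc with non-negligible harmonic measure from the current tip is $A$, and each bounce increases this harmonic measure by a factor bounded away from $1$. A subsidiary issue is the degenerate case in which $\gamma_{\theta/2}$ reaches $\R_-$ before meeting $\eta^L$; this can be absorbed by using $\gamma_{-\theta/2}$ and $\eta^R$ in the symmetric role, or by a small angle perturbation combined with absolute continuity via Lemma~\ref{lem::rn}.
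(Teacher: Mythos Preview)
Your constructive approach is genuinely different from the paper's, and the paper's route is much shorter. The paper argues by contradiction: suppose $\lightcone(\theta)\cap A=\emptyset$. Since the outer boundary of $\lightcone(\theta)$ (its left and right sides, which are flow lines of angles $\tfrac{\theta}{2}$ and $-\tfrac{\theta}{2}$ from $\infty$) cannot cross $\eta^L$, $\eta^R$, or the relevant half of $\partial\h$ by the interaction rules, the arc $A$ must lie in a single complementary pocket $P$ of $\lightcone(\theta)$. Now one studies how $\eta^L$ and $\eta^R$ can exit $P$: the interaction rules forbid $\eta^L$ from hitting the right side of $\partial P$ (height gap $-\pi\chi$), and it cannot hit the left side either without crossing $\eta^R$ or $\eta'$. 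By symmetry the same holds for $\eta^R$. Hence both must exit through the unique opening point of $P$, forcing $\eta^L\cap\eta^R\neq\emptyset$, a contradiction. No Beurling estimate, no iterated bouncing, no convergence argument is needed.

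Your plan, by contrast, has real gaps. First, you have misidentified $P$: in Figure~\ref{fig::lightcone_hits} and in the paper's argument, $P$ is a complementary pocket \emph{of the light cone} $\lightcone(\theta)$, not the region enclosed by $\eta^L$, $\eta^R$, $A$, and $\partial\h$. Second, in this section $\lightcone(\theta)$ is generated by angle-varying flow lines from $\infty$, not from $0$; your bouncing paths would have to enter the region near $A$ from outside, and the ``trapping'' claim as stated does not obviously apply in the direction you need. Third, and most seriously, the step you flag as the crux --- that the bouncing tips accumulate on $A$ rather than on $\eta^L\cup\eta^R$ --- is not handled by a bare Beurling estimate; you would need a quantitative statement that each bounce makes definite progress toward $A$, uniformly over the random geometry, and you have not supplied one. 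The paper sidesteps all of this by working with the \emph{boundary} of $\lightcone(\theta)$ and a topological pocket argument rather than trying to build an explicit accumulating path.
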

\begin{proof}
It follows from the flow line interaction rules \cite[Theorem~1.5]{MS_IMAG} that the left side of $\lightcone(\theta)$ cannot cross $\eta^L$ from right to left (otherwise it would intersect with a height difference of $-\pi \chi$) and it cannot hit $\R_-$.  Similarly, the right side of $\lightcone(\theta)$ cannot cross $\eta^R$ from left to right and it cannot hit $\R_+$.  It thus follows that either the left or right side of $\lightcone(\theta)$ hits $A$ or $A$ is contained inside the region surrounded by the outer boundary of $\lightcone(\theta)$.  In the former case, there is nothing to prove so we shall assume that we are in the latter case.  Assume for contradiction that $\lightcone(\theta)$ does not intersect $A$.  Then $x^L$ and $x^R$ are both contained in a common complementary pocket $P$ of $\lightcone(\theta)$ as shown in the right panel of Figure~\ref{fig::lightcone_hits}.  It follows from the flow line interaction rules that $\eta^L$ cannot intersect the right side of $\partial P$ (otherwise it would intersect with a height difference of $-\pi \chi$, counted from right to left) and $\eta^R$ cannot intersect the left side of $\partial P$ (otherwise it would intersect with a height difference of $-\pi \chi$, counted from right to left).  Moreover, $\eta^L$ (resp.\ $\eta^R$) is prevented from intersecting the left (resp.\ right) side of $\partial P$ because doing so would force $\eta^L$ (resp.\ $\eta^R$) either to cross $\eta^R$ (resp.\ $\eta^L$) or to cross $\eta'$ (not shown in the right panel of Figure~\ref{fig::lightcone_hits}).  Therefore the only possibility is that both $\eta^L$ and $\eta^R$ exit $P$ from its opening point as shown in the right panel of Figure~\ref{fig::lightcone_hits}.  This is a contradiction because then $\eta^L$ and $\eta^R$ are forced to intersect at the pocket opening point.
\end{proof}

For each $\beta > 1$, let $\CD_n^\beta$ be the set of squares with corners in $e^{-\beta n} \Z^2$ which are contained in $[-1,1] \times [1,2]$.  As before, we let $z(Q)$ denote the center of a given square $Q$ and, for each $z \in [-1,1] \times [1,2]$ and $n \in \N$, we let $Q_n(z)$ denote the element of $\CD_n^\beta$ which contains $z$.  Let $\CE_n^\beta = \{z(Q) : Q \in \CD_n^\beta\}$.    For each $\Delta > 1$ such that $\beta > \Delta^2 > \Delta$, let $\CC_n^{\beta,\Delta}$ consist of those $Q \in \CD_n^\beta$ with $Q \subseteq [-1,1] \times [1,2]$ for which $E^n(z(Q))$ occurs.  Let
\[ \CP^{\beta,\Delta} = \ol{\bigcap_{n \in \N} \bigcup_{Q \in \CC_n^{\beta,\Delta}} Q}.\]

\begin{lemma}
\label{lem::perfect_contained}
There exists $\Delta_0 > 1$ such that for each $\beta,\Delta > 1$ with $\beta > \Delta^2 > \Delta > \Delta_0$ we have that $\CP^{\beta,\Delta} \subseteq \lightcone(\theta)$ almost surely.
\end{lemma}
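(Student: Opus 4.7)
The plan is to exploit Lemma~\ref{lem::lightcone_hits}: the events $E^n(w)$ are designed so that, at scale of order $e^{-n\beta}$ near $w$, there exist a stopping time $\tau$ of the counterflow line $\eta'$ and a pair of non-intersecting flow lines of angles $\pm\tfrac{\theta}{2}$ emanating from the two sides of $\eta'([0,\tau])$ and reaching $\R_-$ and $\R_+$ respectively. This is precisely the configuration that, by Lemma~\ref{lem::lightcone_hits}, forces $\lightcone(\theta)$ to intersect the outer-boundary arc of $\eta'([0,\tau])$ between the two starting points, which lies near $w$. Iterating over $n$ will yield a sequence of points of $\lightcone(\theta)$ converging to any given $z \in \CP^{\beta,\Delta}$, and the result will then follow from the closedness of $\lightcone(\theta)$.

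To implement this, I will first unpack the definition of $\CP^{\beta,\Delta}$: for $z \in \CP^{\beta,\Delta}$ and each $n \in \N$ there is a square $Q_n \in \CC_n^{\beta,\Delta}$ with $z$ in its closure, so the centers $w_n := z(Q_n)$ satisfy $|w_n - z| \le \sqrt{2}\,e^{-n\beta}$ and the event $E^n(w_n)$ holds. Working on $E^n(w_n)$, I will pull the iterated construction back through $\psi_{w_n}^{-1}$ and $(\varphi_{w_n}^{n-1})^{-1}$ to the original half-plane: the level-$n$ counterflow line $\eta_{w_n,n}'$ becomes $\eta'$ stopped at an a.s.\ finite time $\tau_n$, and the paths $\eta_{w_n,n}^L$, $\eta_{w_n,n}^R$ become flow lines of $h$ of angles $\tfrac{\theta}{2}$ and $-\tfrac{\theta}{2}$ starting from points $x^L$, $x^R$ on the left and right sides of $\eta'([0,\tau_n])$. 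The chain of merges with $\wh\eta_j^L$, $\wh\eta_j^R$ guaranteed by $E_j(w_n)$ for $j=n,n-1,\ldots,1$, together with the base case $\wh\eta_1^L \subseteq \R_-$, $\wh\eta_1^R \subseteq \R_+$, extends these to single flow lines $\eta^L$, $\eta^R$ of $h$ that terminate on $\R_-$ and $\R_+$ without ever intersecting each other.

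Applying Lemma~\ref{lem::lightcone_hits} to $(\tau_n, x^L, x^R, \eta^L, \eta^R)$ then produces a point $p_n \in \lightcone(\theta)$ on the outer-boundary arc $A_n$ of $\eta'([0,\tau_n])$ running from $x^L$ to $x^R$. Lemma~\ref{lem::v_contain}(i) places $A_n$ inside $\ol V_{n-2}(w_n)$, a ball around $w_n$ whose radius is a constant multiple of $2^{5n}\,\im(w_n)\,e^{-n\beta}$; for $\beta$ sufficiently large (certainly whenever $\Delta_0$ is taken large enough that $\beta > \Delta_0 > 5\log 2$) this radius decays to $0$ with $n$, so $p_n \to z$. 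Since $\lightcone(\theta)$ is closed by definition, this gives $z \in \lightcone(\theta)$, as desired.

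The main obstacle will be the careful bookkeeping in the second paragraph: I must verify that the iterated conformal frames consistently identify the notions of ``left'' and ``right'' side of $\eta'([0,\tau_n])$, that the angles $\pm\tfrac{\theta}{2}$ of the flow lines are preserved across the pullbacks, and that the chain of merges really amalgamates into single well-defined flow lines of $h$ reaching $\R_\pm$ without crossing each other. Angle preservation is immediate from the imaginary-geometry change-of-coordinates rule~\eqref{eqn:ig_change_of_coordinates}; merging upon first intersection of equal-angle flow lines and non-crossing of flow lines with angle difference in $(0,2\lambda'/\chi)$ both follow from \cite[Theorem~1.5]{MS_IMAG}. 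A further small point to check is that the stopping times $\tau_n$ are genuinely a.s.\ finite on $E^n(w_n)$, which is immediate from the requirement in the definition of $E_k$ that $\eta_k'$ hit $\partial B(i,e^{-\beta})$.
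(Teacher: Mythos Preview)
Your proposal is correct and follows exactly the approach the paper takes: the paper's own proof is the single sentence ``This follows from Lemma~\ref{lem::lightcone_hits} and the definition of the events,'' and what you have written is precisely the detailed unpacking of that sentence. Your bookkeeping---angle preservation under~\eqref{eqn:ig_change_of_coordinates}, the merge chain via \cite[Theorem~1.5]{MS_IMAG}, and the use of Lemma~\ref{lem::v_contain} to localize the arc $A_n$---is accurate and matches the structure implicit in the paper's construction of the events $E_j$.
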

\begin{proof}[Proof of Lemma~\ref{lem::perfect_contained}]
This follows from Lemma~\ref{lem::lightcone_hits} and the definition of the events.
\end{proof}

We can now complete the proof of Theorem~\ref{thm::lightcone_dimension}.

\begin{proof}[Proof of Theorem~\ref{thm::lightcone_dimension}]
Standard arguments for computing the Hausdorff dimension of a random fractal imply that an estimate of the form given in Proposition~\ref{prop::perfect_two_point} combined with Lemma~\ref{lem::perfect_contained} gives that, for each $\xi >0$, the probability of the event that $\dimH(\lightcone(\theta)) \geq d(\kappa,\theta)-2\xi$ is positive (see, for example, the arguments in \cite{DPRZ01, HMP10, MSW_CLE_GASKET, MW_INTERSECTIONS,multifractal_spectrum,extreme_nesting}).  For completeness, we will include the entire argument.  For each $n \in \N$, let $\mu_n$ be the measure on $X = [-1,1]  \times [1,2]$ defined by
\[ \mu_n(A)=\int_A \sum_{z\in \CE_n^\beta}\frac{\one_{E^n(z)}}{\p[E^n(z)]}\one_{Q_n(z)}(z')dz'\quad\text{for}\quad A\subseteq X \quad\text{Borel}.\]
Then $\E[ \mu_n(X)]=1$.  Recall that $\alpha = 2-d(\kappa,\theta)$.  Moreover, we have that
\begin{align*}
\E&[\mu_n(X)^2]=e^{-4 \beta n} \sum_{z,w\in\CE_n^\beta}\frac{\p[ E^n(z)\cap E^n(w)] }{\p[E^n(z)]\p[E^n(w)]}\\
&=e^{-4 \beta n} \sum_{\substack{z,w\in\CE_n^\beta \\ z\neq w}}\frac{\p[ E^n(z)\cap E^n(w)]}{\p[E^n(z)]\p[E^n(w)]}+e^{-4 \beta n}\sum_{z\in\CE_n^\beta}\frac{1}{\p[E^n(z)]}.
\intertext{If we choose $n$, $\beta$, and $\Delta$ large enough, then applying Proposition~\ref{prop::perfect_two_point} to the first summand and Proposition~\ref{prop::perfect_one_point} to the second summand yields that the above is bounded by}
&\lesssim e^{-4 \beta n} \sum_{\substack{z,w\in\CE_n^\beta \\ z\neq w}} |z-w|^{-\alpha-\xi}+e^{-4 \beta n}\sum_{z\in\CE_n^\beta} e^{(\alpha+\xi) \beta n} \lesssim 1
\end{align*}
Set $d_\xi=d(\kappa,\theta)-2\xi$.  Let $I_{d_\xi}(\mu)$ denote the $d_\xi$-energy of $\mu$.  We also have that
{\allowdisplaybreaks
\begin{align*}
\E&[I_{d_\xi}(\mu_n)]=\sum_{z,w\in\CE_n^\beta}\frac{\p[ E^n(z)\cap E^n(w)]}{\p[ E^n(z)]\p[E^n(w)]}\iint\limits_{Q_n(z)\times Q_n(w)}\frac{dz'dw'}{|z'-w'|^{d_\xi}}\\
&=\sum_{\substack{z,w\in\CE_n^\beta \\ z\neq w}}\frac{\p[ E^n(z)\cap E^n(w)]}{\p[E^n(z)]\p[E^n(w)]}\iint\limits_{Q_n(z)\times Q_n(w)}\frac{dz'dw'}{|z'-w'|^{d_\xi}}\\
&\quad\quad\quad\quad+\sum_{z\in\CE_n^\beta}\frac{1}{\p[ E^n(z)] }\iint\limits_{Q_n(z)\times Q_n(z)}\frac{dz'dw'}{|z'-w'|^{d_\xi}}\\
&\lesssim \sum_{\substack{z,w\in\CE_n^\beta \\ z\neq w}}\frac{\p[ E^n(z)\cap E^n(w)]}{\p[E^n(z)]\p[E^n(w)]}\left(\frac{e^{-4 \beta n}}{|z-w|^{d_\xi}}\right)+\sum_{z\in\CE_n^\beta}\frac{1}{\p[ E^n(z)]} \times e^{(d_\xi-4) \beta n}\\
&\lesssim \sum_{\substack{z,w\in\CE_n^\beta \\ z\neq w}} |z-w|^{-\alpha-\xi} \times e^{-4 \beta n} \times |z-w|^{-d_\xi}+\sum_{z\in\CE_n^\beta} e^{(\alpha+\xi)\beta n} \times e^{(d_\xi-4) \beta n}
 \lesssim 1.
\end{align*}}
Consequently, the sequence $(\mu_n)$ has a subsequence $(\mu_{n_k})$ that converges weakly to some measure $\mu$ which is non-zero with positive probability. It is clear that $\mu$ is supported on $\CP^{\beta,\Delta}$ and has finite $d_\xi$-energy. From \cite[Theorem~4.27]{BM}, we know that
\[ \p\left[ \dimH(\lightcone(\theta))\geq d_\xi \right]>0.\]

It is left to explain the $0$-$1$ law: that for each $d\in [0,2]$, $\p[\dimH(\lightcone(\theta))=d]\in\{0,1\}$.  We will use the same argument used in the proof of \cite[Theorem~1.5]{MW_INTERSECTIONS}.  By swapping the roles of $0$ and $\infty$ using the conformal transformation $z \mapsto -1/z$, we now assume that $\lightcone(\theta)$ grows from $0$ towards $\infty$ rather than from $\infty$ towards $0$.  For each $r>0$, we let $D_r=\dimH(\lightcone(\theta) \cap B(0,r) \cap \h)$. It is clear that $0 < r_1<r_2$ implies $D_{r_1}\le D_{r_2}$.  By the scale invariance of the setup, we have that $D_{r_1}$ has the same law as $D_{r_2}$. Thus $D_{r_1}=D_{r_2}$ almost surely for all $0 < r_1<r_2$. In particular, $\p[D_{\infty}=D_r]=1$ for all $r>0$. Thus the events $\{D_{\infty}=d\}$ and $\{D_r=d\}$ are the same up to a set of probability zero.  The latter is measurable with respect to the $h$ restricted to $B(0,r)$. Letting $r \downarrow 0$, we see that this implies that the event $\{D_{\infty}=d\}$ is trivial, which completes the proof.
\end{proof}

\bibliographystyle{hmralphaabbrv}
\bibliography{sle_kappa_rho}

\bigskip

\filbreak
\begingroup
\small
\parindent=0pt

\bigskip
\vtop{
\hsize=5.3in
Statistical Laboratory, DPMMS\\
University of Cambridge\\
Cambridge, UK}

 \endgroup \filbreak

\end{document}